\let\oldtocsection=\tocsection
\let\oldtocsubsection=\tocsubsection
\let\oldtocsubsubsection=\tocsubsubsection
\renewcommand{\tocsection}[2]{\hspace{0em}\oldtocsection{#1}{#2}}
\renewcommand{\tocsubsection}[2]{\hspace{3em}\oldtocsubsection{#1}{#2}}
\renewcommand{\tocsubsubsection}[2]{\hspace{6em}\oldtocsubsubsection{#1}{#2}}
\Crefname{construction}{Construction}{Constructions}
\newtheorem{lemma}{Lemma}[section]
\newtheorem{proposition}[lemma]{Proposition}
\newtheorem{definition/proposition}[lemma]{Definition/Proposition}
\newtheorem{corollary}[lemma]{Corollary}
\newtheorem{theorem}[lemma]{Theorem}
\theoremstyle{definition}
\newtheorem{definition}[lemma]{Definition}
\newtheorem{remark}[lemma]{Remark}
\newtheorem{construction}[lemma]{Construction}
\newtheorem{example}[lemma]{Example}
\newcommand{\B}{\mathbb{B}}
\newcommand{\C}{\mathbb{C}}
\newcommand{\D}{\mathbb{D}}
\newcommand{\F}{\mathbb{F}}
\newcommand{\G}{\mathbb{G}}
\newcommand{\N}{\mathbb{N}}
\renewcommand{\P}{\mathbb{P}}
\newcommand{\R}{\mathbb{R}}
\newcommand{\Z}{\mathbb{Z}}
\newcommand{\cA}{\mathcal{A}}
\newcommand{\cC}{\mathcal{C}}
\newcommand{\cD}{\mathcal{D}}
\newcommand{\cF}{\mathcal{F}}
\newcommand{\cH}{\mathcal{H}}
\newcommand{\cI}{\mathcal{I}}
\newcommand{\cL}{\mathcal{L}}
\newcommand{\cM}{\mathcal{M}}
\newcommand{\cN}{\mathcal{N}}
\newcommand{\cS}{\mathcal{S}}
\newcommand{\cX}{\mathcal{X}}
\newcommand{\bH}{\mathbf{H}}
\newcommand{\bP}{\mathbb{P}}
\newcommand{\bR}{\mathbf{R}}
\newcommand{\coeffs}{\mathbbm{k}}
\newcommand{\e}{\epsilon}
\newcommand{\GL}{\mathrm{GL}}
\newcommand{\Sh}{\mathit{Sh}}
\newcommand{\Loc}{\mathit{Loc}}
\DeclareMathOperator{\Gr}{Gr}
\DeclareMathOperator{\Meas}{\B}
\DeclareMathOperator{\im}{\F} 
\newcommand\onto{\twoheadrightarrow}
\newcommand\into{\hookrightarrow}
\newcommand{\La}{\Lambda}
\newcommand{\sheafhom}{\mathscr Hom}
\newcommand{\congto}{\xrightarrow{\sim}}
\newcommand\dmod{\textrm{-mod}}
\newcommand*{\hrlen}{10}
\newcommand*{\hramp}{3}
\newcommand*{\fcolor}{black!5}
\newcommand*{\bvertrad}{.09}
\newcommand*{\wvertrad}{.06}
\newcommand*{\ifac}{.26}
\tikzset{
asdstyle/.style={blue,thick},
righthairs/.style={postaction={decorate,draw,decoration={border,amplitude=\hramp,segment length=\hrlen,angle=-90,pre=moveto,pre length=\hrlen/2}}},
lefthairs/.style={postaction={decorate,draw,decoration={border,amplitude=\hramp,segment length=\hrlen,angle=90,pre=moveto,pre length=\hrlen/2}}},
righthairsnogap/.style={postaction={decorate,draw,decoration={border,amplitude=\hramp,segment length=\hrlen,angle=-90}}},
lefthairsnogap/.style={postaction={decorate,draw,decoration={border,amplitude=\hramp,segment length=\hrlen,angle=90}}},
graphstyle/.style={thick},
arrowstyle/.style={thick,decorate,decoration={snake,amplitude=1.7,segment length=10pt,post length=.5mm,pre length=0}},
genmapstyle/.style={thick,-stealth'},
arrhdstyle/.style={thick},
exceptarcstyle/.style={red, ultra thick},
dualquiverstyle/.style={thick,->}
}
\DeclareMathOperator{\Ext}{Ext}
\DeclareMathOperator{\End}{End}
\DeclareMathOperator{\Hom}{Hom}
\DeclareMathOperator{\Aut}{Aut}
\DeclareMathOperator{\Spec}{Spec}
\newcommand{\cunknot}{\mathbin{\text{\rotatebox[origin=c]{90}{$(\!)$}}}}
\author{Vivek Shende, David Treumann, Harold Williams, and Eric Zaslow}
\title{Cluster varieties from Legendrian knots}
\numberwithin{equation}{subsection}
\begin{document}

\begin{abstract}
Many interesting spaces --- including all positroid strata and wild character varieties --- 
are moduli of constructible sheaves on a surface with microsupport in a Legendrian link.
We show that the existence of cluster structures on these spaces may be deduced in a uniform, systematic fashion by constructing and taking the sheaf quantizations of a set of exact Lagrangian fillings in correspondence with isotopy representatives whose front projections have crossings with alternating orientations. 
It follows in turn that results in cluster algebra may be used to construct and distinguish exact Lagrangian 
fillings of Legendrian
links in the standard contact three space. 
\end{abstract}

\maketitle

{\small \tableofcontents}

\newpage

\section{Introduction}
The moduli space of decorated
local systems on a punctured surface admits a cluster structure --- which is to say, 
a space of  {\em nonabelian} representations of the fundamental group 
can be built out of algebraic tori, which are objects of an {\em abelian} nature.  
The original proof of this fact in \cite{FG1} is constructive and combinatorial: for each ideal triangulation of the surface one defines a toric coordinate system using invariants of configurations of flags. An orthogonal geometric perspective was provided in \cite{GMN2}: given a complex structure on the surface and a spectral curve of the associated Hitchin system, one defines a map from abelian local systems on the curve to nonabelian local systems on the base by studying trajectories of holomorphic differentials.

One could ask for an account which is complementary to these in the following sense: rather than start by prescribing a collection of toric coordinate systems by hand, one should be able to start from general geometric principles and deduce abstractly that the moduli space will be populated by toric charts -- the explicit form of these charts then becoming the result of a calculation rather than a definition. To provide a fully satisfactory foundation for the theory it should be clear in advance that the transition functions between charts will have a universal form. One would further want the standard package of cluster combinatorics, such as triangulations and bipartite graphs, to emerge as a natural byproduct. Finally, since the moduli of local systems only depends on the topology of the surface, the cluster structure should in principle be visible without referring to any intermediate choice of complex structure.

Clues to such an approach are provided by the following known connections between cluster theory and symplectic geometry. It was observed in \cite{Thu} that cluster algebras may be formally associated to connectivity classes of alternating triple point diagrams, or equivalently isotopy classes of certain Legendrian knots. These diagrams appeared later as encodings of bipartite graphs in the theory of positroids \cite{Po} and a number of other cluster-algebraic contexts \cite{Gon,GK}. In another direction, the spectral curves in \cite{GMN2} are in particular (holomorphic) Lagrangians in the cotangent bundle of the base. Finally, in symplectic geometry cluster transformations are known to 
appear in the context of wall crossing: in particular, given a family of exact Lagrangian surfaces, smooth away from
the appearance at one instant of a single double point, the families of objects defined
by rank one local systems before and after the critical moment are algebraic tori related by a cluster transformation (c.f. \cite{A} or \cite[Lec. 11]{Se}, for example).

\vspace{2mm}

Here we will give an account of the existence of cluster structures on moduli spaces of local systems 
which begins with the above geometric structures --- Legendrian knots and Lagrangian surfaces which fill them --- 
and arrives at explicit coordinate systems 
only as the result of calculations rather than prescriptions.   
The knots and fillings will live in the cosphere and cotangent bundles of the base surface. Decorated local systems, that is local systems with extra data at punctures, arise via sheaf quantization.

Recall that a basic form of quantization takes functions on Lagrangians in a cotangent bundle $T^*M$ to distributions on $M$, and symplectomorphisms to operators given by integral kernels \cite{BW}. By analogy, sheaf quantization takes local systems on Lagrangians in $T^*M$ to constructible sheaves on $M$,
and conic symplectomorphisms to autoequivalences of the sheaf category given by integral kernels.

The essential theorems of sheaf quantization may be obtained by passing through the Fukaya category following \cite{NZ,N} or independently of Floer theory as in \cite{Tam, GKS, Gui}. 
The key results we need are the following.  In \cite{KS}, the 
microsupport of a sheaf on a manifold is defined; it is a conical co-isotropic locus in the cotangent bundle 
measuring the failure of local propagation of sections.  For the sheaves of interest here, this locus is a stratifiable (generally singular) 
conical Lagrangian whose boundary is a Legendrian link in the cosphere bundle.  One can study the subcategory of sheaves with fixed microsupport.  In \cite{GKS}, it is shown
that `contact isotopies quantize,' i.e. that given a contact isotopy there is a unique family of sheaf integral kernels 
such that the corresponding autoequivalences of the sheaf category act on microsupports by the specified isotopy.

In \cite{Gui, JinTreumann} (or \cite{NZ,N}) it is shown that `Lagrangians quantize', i.e. that given an eventually conical exact Lagrangian $L \subset T^*M$ there is a fully faithful functor from locally constant (Maslov-twisted) sheaves on $L$ to sheaves on $M$ whose microsupport at infinity is $\partial L$.
The Lagrangians we study here will be simple enough that we can construct the quantization functor by hand, independently of any general results. 

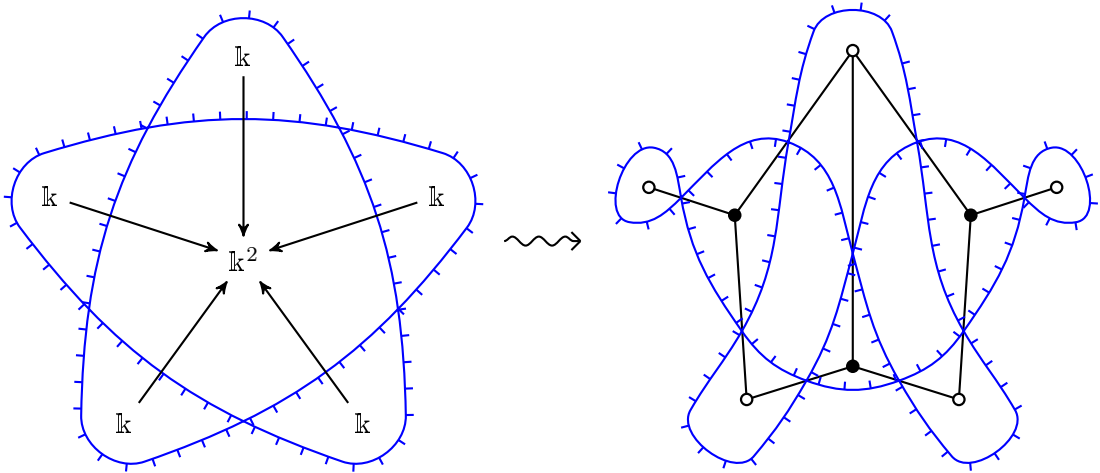
\begin{figure}
\centering
\begin{tikzpicture}
\newcommand*{\boff}{10}; \newcommand*{\aoff}{35}; \newcommand*{\rad}{3};
\node (r) [matrix] at (2.7*\rad,0) {
\coordinate (b0) at (90-\boff:\rad); \coordinate (b1) at (90+\boff:\rad);
\coordinate (b2) at (90+72-\boff:\rad); \coordinate (b3) at (90+72+\boff:\rad);
\coordinate (b4) at (90+2*72-\boff:\rad); \coordinate (b5) at (90+2*72+\boff:\rad);
\coordinate (b6) at (90+3*72-\boff:\rad); \coordinate (b7) at (90+3*72+\boff:\rad);
\coordinate (b8) at (90+4*72-\boff:\rad); \coordinate (b9) at (90+4*72+\boff:\rad);
\coordinate (c0) at (60:.57*\rad); \coordinate (c1) at (120:.57*\rad);
\coordinate (c2) at (18:.8*\rad); \coordinate (c3) at (180-18:.8*\rad);
\coordinate (c4) at (-35:.6*\rad); \coordinate (c5) at (215:.6*\rad);
\coordinate (c6) at (-70:.6*\rad); \coordinate (c7) at (180+70:.6*\rad);
\coordinate (c8) at (90:0*\rad); \coordinate (w0) at (90:.9*\rad);
\coordinate (w1) at (180-18:.95*\rad); \coordinate (w2) at (18:.95*\rad);
\coordinate (w3) at (-126:.8*\rad); \coordinate (w4) at (180+126:.8*\rad);
\coordinate (l0) at (-90:.5*\rad); \coordinate (l1) at (180-18:.55*\rad);
\coordinate (l2) at (18:.55*\rad);
\draw[graphstyle] (w0) -- (l1) -- (w3) -- (l0) -- (w4) -- (l2) -- (w0);
\draw[graphstyle] (w1) -- (l1); \draw[graphstyle] (w2) -- (l2); \draw[graphstyle] (w0) -- (l0);
\foreach \c in {w0,w1,w2,w3,w4,l0,l1,l2} {\fill (\c) circle (\bvertrad);}
\foreach \c in {w0,w1,w2,w3,w4} {\fill[white] (\c) circle (\wvertrad);}
\draw[asdstyle,righthairs] (b0) to[out=110,in=70] (b1) to[out=180+70,in=80] (c1) to[out=180+80,in=60] (c5) to[out=180+60,in=60] (b4) to[out=180+60,in=230] (b5) to[in=-120,out=180-130] (c7) to[in=-105,out=180-120] (c8) to[in=180+20,out=180-105] (c0) to[in=-225,out=20] (c2) to[in=-170,out=180-225] (b8) to[in=10,out=180-170] (b9) to[in=180-100,out=-170] (c2) to[in=180-125,out=-100] (c4) to[in=180+200,out=-125] (c6) to[in=180-200,out=180+20] (c7) to[in=180+125,out=-200] (c5) to[in=180+100,out=125] (c3)  to[in=180+170,out=100] (b2) to[out=180-10,in=170] (b3) to[out=180+170,in=225] (c3) to[out=180+225,in=180-20] (c1) to[out=180+180-20,in=105] (c8) to[out=180+105,in=120] (c6) to[out=180+120,in=130] (b6) to[in=-60,out=180-230] (b7) to[in=-60,out=180-60] (c4) to[in=-80,out=180-60] (c0) to[in=-70,out=180-80] (b0);\\};
\node (l) [matrix] at  (0,0) {
\coordinate (b0) at (90-\boff:\rad); \coordinate (b1) at (90+\boff:\rad);
\coordinate (b2) at (90+72-\boff:\rad); \coordinate (b3) at (90+72+\boff:\rad);
\coordinate (b4) at (90+2*72-\boff:\rad); \coordinate (b5) at (90+2*72+\boff:\rad);
\coordinate (b6) at (90+3*72-\boff:\rad); \coordinate (b7) at (90+3*72+\boff:\rad);
\coordinate (b8) at (90+4*72-\boff:\rad); \coordinate (b9) at (90+4*72+\boff:\rad);
\draw[asdstyle,righthairs] (b0) to[out=90+\aoff,in=90-\aoff] (b1) to[out=180+90-\aoff,in=180+2*72+90+\aoff] (b4) to[out=2*72+90+\aoff,in=2*72+90-\aoff] (b5) to[out=180+2*72+90-\aoff,in=180-1*72+90+\aoff] (b8) to[out=-1*72+90+\aoff,in=-1*72+90-\aoff] (b9) to[out=180-1*72+90-\aoff,in=180+1*72+90+\aoff] (b2) to[out=1*72+90+\aoff,in=1*72+90-\aoff] (b3) to[out=180+1*72+90-\aoff,in=180-2*72+90+\aoff] (b6) to[out=-2*72+90+\aoff,in=-2*72+90-\aoff] (b7) to[out=180-2*72+90-\aoff,in=180+90+\aoff] (b0);
\node (v0) at (90:.9*\rad) {$\coeffs$}; \node (v1) at (90+72:.9*\rad) {$\coeffs$};
\node (v2) at (90+2*72:.9*\rad) {$\coeffs$}; \node (v3) at (90+3*72:.9*\rad) {$\coeffs$};
\node (v4) at (90+4*72:.9*\rad) {$\coeffs$}; \node (w) at (0,0) {$\coeffs^2$};
\foreach \n in {v0,v1,v2,v3,v4} {\draw[genmapstyle] (\n) to (w);}\\};
\coordinate (rw) at ($(r.west)+(-1mm,0)$); \coordinate (le) at ($(l.east)+(1mm,0)$);
\draw[arrowstyle] (le) -- (rw);
\draw[arrhdstyle] ($(rw)+(-1.2mm,1.2mm)$) -- (rw); \draw[arrhdstyle] ($(rw)+(-1.2mm,-1.2mm)$) -- (rw);
\end{tikzpicture}\caption{ \label{fig:introGr24} On the left, a Legendrian braid closure whose rank-one moduli space is the open positroid stratum in $\Gr(2,5)$. The Legendrian lives in the cocircle bundle of the page;  the hairs drawn along the immersed curve indicate the conormal directions in which it is lifted from its projection. We can isotope it to become alternating as on the right, obtaining an associated bipartite graph.
If we restrict our attention to a disk whose boundary passes through the five white vertices we obtain a reduced plabic graph in the terminology of \cite{Po}.}
\end{figure}

We can now describe our construction.  Spaces of local
systems with invariant flags at punctures can be understood as moduli spaces of sheaves 
microsupported at infinity on Legendrians projecting to concentric circles around the punctures.  
A similar description applies to moduli of `wild' local systems (Section~\ref{sec:wild}) and open positroid varieties
(Theorem~\ref{thm:positroidasmoduli}) by considering Legendrian braid closures.  

Rank one local systems on an
exact Lagrangian filling $L$ of such a Legendrian are parameterized by the complex torus 
$\Hom(\pi_1(L), \C^*) \cong (\C^*)^{b_1(L)}$.  By sheaf quantization, this determines a chart on moduli, specifically of sheaves whose microstalks on the Legendrian boundary have rank one.  
We may move the Legendrian around until we are best able to see such a filling, then carry it back to a filling of the original link. By the quantization of contact isotopies we can capture this process sheaf-theoretically. 

We show that when the Legendrian has been isotoped so that its front projection has crossings of alternating orientations one obtains a natural exact Lagrangian filling (Proposition \ref{prop:conjlagexistence}). 
Such an isotopy transforms the front projection into the alternating
strand diagram of a bipartite graph embedded in the base surface. The filling retracts onto this graph and is an exact Lagrangian embedding of the conjugate surface of \cite{GK}. Holonomies around the faces of the graph form a natural coordinate system on the associated chart.

In the case of positroid varieties, we show that the isotopy from the defining braid of the positroid
to any given alternating representative is unique up to homotopy (Proposition \ref{prop:uniqueisotopies}). By the general principles discussed above, quantizing the associated Lagrangian and isotopy results in a toric chart on the positroid variety. We calculate this chart explicitly and show that it is exactly the boundary measurement map defined in \cite{Po} (Theorem \ref{thm:positroid}).

Following \cite{Po,Thu} a {\em square move} of bipartite graphs induces a Legendrian
isotopy from one alternating Legendrian to another. 
The corresponding Lagrangian fillings differ
by a Lagrangian surgery (Proposition \ref{prop:squaremovesurgery}).  
We calculate that the resulting charts differ by a cluster $\cX$-transformation (Theorem~\ref{prop:localclustertrans}). 
This is a sheaf-theoretic incarnation of the known relation between surgeries and cluster transformations in Floer theory. We emphasize that even before the calculation, it is immediate from the local nature of sheaves and the square move that the transition function will have a universal form independent of the global structure of the graph. 

\begin{figure}
\centering
\begin{tikzpicture}
\newcommand*{\off}{18};\newcommand*{\rad}{2.3};\newcommand*{\vrad}{1.0};
\node (l) [matrix] at (0,0) {
\coordinate (bl) at (180:\rad/2); \coordinate (br) at (0:\rad/2);
\coordinate (wt) at (90:\rad*.8); \coordinate (wb) at (-90:\rad*.8);
\coordinate (wr) at (0:\rad*.9); \coordinate (wl) at (180:\rad*.9);
\coordinate (ur) at ($(-\off:\rad)!\ifac!(-180+\off:\rad)$); \coordinate (lr) at ($(\off-180:\rad)!\ifac!(-\off:\rad)$);
\coordinate (ul) at ($(\off:\rad)!\ifac!(-180-\off:\rad)$); \coordinate (ll) at ($(-\off-180:\rad)!\ifac!(\off:\rad)$);
\coordinate (mr) at ($(ur)!.5!(lr)$); \coordinate (ml) at ($(ul)!.5!(ll)$);
\path[fill=\fcolor, name path=p1] (90+\off:\rad) -- (-90-\off:\rad) arc[start angle=-90-\off,end angle=-90+\off,radius=\rad] (-90+\off:\rad) -- (90-\off:\rad) arc[start angle=90-\off,end angle=90+\off,radius=\rad] (90+\off:\rad);
\path[fill=\fcolor, name path=p2] (180-\off:\rad)  to[out=0,in=-180] (lr) to (ur) [out=0,in=-180] to (\off:\rad) arc[start angle=\off,delta angle=-2*\off,radius=\rad] (-\off:\rad) [out=-180,in=0] to (ul) to (ll) to[out=-180,in=0] (180+\off:\rad) arc[start angle=180+\off,delta angle=-2*\off,radius=\rad] (180-\off:\rad);
\path[fill=white, name intersections={of=p1 and p2, name=i}] (i-1) -- (i-2) -- (i-4) -- (i-3) -- cycle;
\draw[dashed] (0,0) circle (\rad);
\draw[graphstyle] (wb) to (br) to (wt) to (bl) to (wb);
\draw[graphstyle] (br) to (wr) to (0:\rad);
\draw[graphstyle] (bl) -- (wl) -- (180:\rad);
\draw[graphstyle] (wt) to (90:\rad); \draw[graphstyle] (wb) to (-90:\rad);
\foreach \c in {bl,br,wt,wb,wr,wl} {\fill[black] (\c) circle (\bvertrad);}
\foreach \c in {wt,wb,wr,wl} {\fill[white] (\c) circle (\wvertrad);}
\draw[asdstyle,righthairs] (90+\off:\rad) -- (-90-\off:\rad);
\draw[asdstyle,righthairs] (-90+\off:\rad) -- (90-\off:\rad);
\draw[asdstyle,lefthairs] (mr) to (ur) [out=0,in=180] to (\off:\rad);
\draw[asdstyle,righthairs] (mr) to (lr) to[in=0,out=180] (180-\off:\rad);
\draw[asdstyle,righthairs] (ml) to (ul) [out=0,in=180] to (-\off:\rad);
\draw[asdstyle,lefthairs] (ml) to (ll) to[in=0,out=180] (180+\off:\rad);\\};

\node (m) [matrix] at (2.5*\rad,0) {
\path[fill=\fcolor] (0,0) to[out=-90,in=90]  (-90-\off:\rad) arc[start angle=-90-\off,delta angle=2*\off,radius=\rad] (-90+\off:\rad) to[out=90,in=-90] (0,0);
\path[fill=\fcolor] (0,0) to[out=0,in=180]  (-\off:\rad) arc[start angle=-\off,delta angle=2*\off,radius=\rad] (\off:\rad) to[out=180,in=0] (0,0);
\path[fill=\fcolor] (0,0) to[out=90,in=-90]  (90-\off:\rad) arc[start angle=90-\off,delta angle=2*\off,radius=\rad] (90+\off:\rad) to[out=-90,in=90] (0,0);
\path[fill=\fcolor] (0,0) to[out=180,in=0]  (180-\off:\rad) arc[start angle=180-\off,delta angle=2*\off,radius=\rad] (180+\off:\rad) to[out=0,in=180] (0,0);
\draw[dashed] (0,0) circle (\rad);
\draw[asdstyle,lefthairsnogap] (0,0) to[out=90,in=-90] (90+\off:\rad);
\draw[asdstyle,righthairsnogap] (0,0) to[out=-90,in=90]  (-90-\off:\rad);
\draw[asdstyle,righthairsnogap] (0,0) to[out=90,in=-90]  (90-\off:\rad);
\draw[asdstyle,lefthairsnogap] (0,0) to[out=-90,in=90]  (-90+\off:\rad);
\draw[asdstyle,lefthairsnogap] (0,0) to[out=0,in=180] (\off:\rad);
\draw[asdstyle,righthairsnogap] (0,0) to[out=180,in=0] (180-\off:\rad);
\draw[asdstyle,righthairsnogap] (0,0) to[out=0,in=180] (-\off:\rad);
\draw[asdstyle,lefthairsnogap] (0,0) to[out=180,in=0] (180+\off:\rad);\\};

\node (r) [matrix] at (5*\rad,0) {
\coordinate (bl) at (180+90:\rad/2); \coordinate (br) at (0+90:\rad/2);
\coordinate (wt) at (90+90:\rad*.8); \coordinate (wb) at (-90+90:\rad*.8);
\coordinate (wr) at (0+90:\rad*.9); \coordinate (wl) at (180+90:\rad*.9);
\coordinate (ur) at ($(-\off+90:\rad)!\ifac!(-90+\off:\rad)$); \coordinate (lr) at ($(\off-90:\rad)!\ifac!(90-\off:\rad)$);
\coordinate (ul) at ($(\off+90:\rad)!\ifac!(-90-\off:\rad)$); \coordinate (ll) at ($(-\off-90:\rad)!\ifac!(90+\off:\rad)$);
\coordinate (mr) at ($(ur)!.5!(lr)$); \coordinate (ml) at ($(ul)!.5!(ll)$);
\path[fill=\fcolor, name path=p1] (90+90+\off:\rad) -- (-90+90-\off:\rad) arc[start angle=-\off,end angle=\off,radius=\rad] -- (180-\off:\rad) arc[start angle=180-\off,end angle=180+\off,radius=\rad];
\path[fill=\fcolor, name path=p2] (180+90-\off:\rad)  to[out=90,in=-90] (lr) to (ur) [out=90,in=-90] to (\off+90:\rad) arc[start angle=90+\off,delta angle=-2*\off,radius=\rad] [out=-90,in=90] to (ul) to (ll) to[out=-90,in=90] (180+90+\off:\rad) arc[start angle=180+90+\off,delta angle=-2*\off,radius=\rad];
\path[fill=white, name intersections={of=p1 and p2, name=i}] (i-1) -- (i-2) -- (i-4) -- (i-3) -- cycle;
\draw[dashed] (0,0) circle (\rad);
\draw[graphstyle] (wb) to (br) to (wt) to (bl) to (wb);
\draw[graphstyle] (br) to (wr) to (90:\rad);
\draw[graphstyle] (bl) -- (wl) -- (180+90:\rad);
\draw[graphstyle] (wt) to (90+90:\rad); \draw[graphstyle] (wb) to (-90+90:\rad);
\foreach \c in {bl,br,wt,wb,wr,wl} {\fill[black] (\c) circle (\bvertrad);}
\foreach \c in {wt,wb,wr,wl} {\fill[white] (\c) circle (\wvertrad);}
\draw[asdstyle,righthairs] (90+90+\off:\rad) -- (-90+90-\off:\rad);
\draw[asdstyle,righthairs] (-90+90+\off:\rad) -- (90+90-\off:\rad);
\draw[asdstyle,lefthairs] (mr) to (ur) [out=0+90,in=180+90] to (\off+90:\rad);
\draw[asdstyle,righthairs] (mr) to (lr) to[in=0+90,out=180+90] (180+90-\off:\rad);
\draw[asdstyle,righthairs] (ml) to (ul) [out=0+90,in=180+90] to (-\off+90:\rad);
\draw[asdstyle,lefthairs] (ml) to (ll) to[in=0+90,out=180+90] (180+90+\off:\rad);\\};
\coordinate (rw) at ($(r.west)+(-1mm,0)$); \coordinate (me) at ($(m.east)+(1mm,0)$);
\coordinate (mw) at ($(m.west)+(-1mm,0)$); \coordinate (le) at ($(l.east)+(1mm,0)$);
\foreach \ca/\cb in {le/mw,me/rw} {
\draw[arrowstyle] (\ca) -- (\cb);
\draw[arrhdstyle] ($(\cb)+(-1.2mm,1.2mm)$) -- (\cb); \draw[arrhdstyle] ($(\cb)+(-1.2mm,-1.2mm)$) -- (\cb);}
\end{tikzpicture}
\caption{The left and right frames show the front projections of alternating Legendrians related by a square move of bipartite graphs.  The canonical Legendrian isotopy between them passes through alternating Legendrians except at one moment pictured in the middle.  This Legendrian has a singular exact filling which meets the cotangent space of the origin in the union of the conormal lines to the $x$- and $y$-axes.  Altogether we have a family of exact fillings that undergoes a Lagrangian surgery.}
\label{fig:squaremoveintro}
\end{figure}
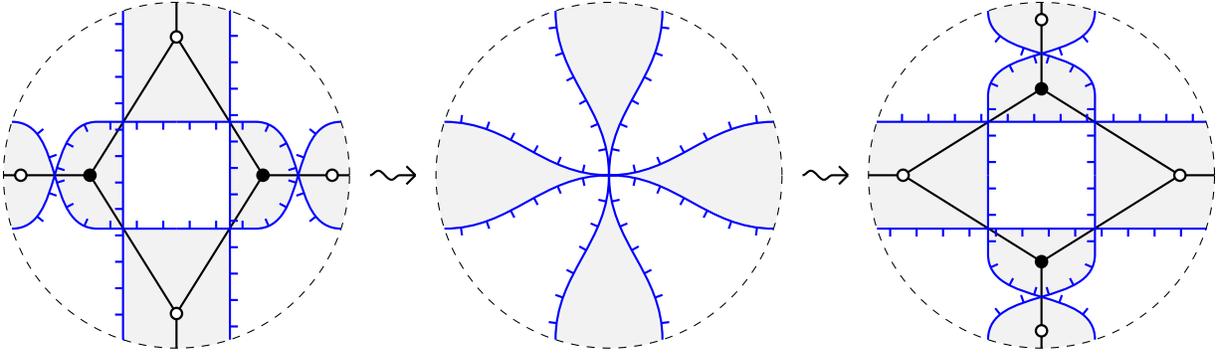

Though we have focused on positroid varieties and (wild) character varieties, the scope of our discussion is really that of moduli spaces of sheaves microsupported on any Legendrian link that admits an alternating representative. We show, for example, that collections of arbitrary Legendrian braids around punctures, of which the Stokes diagrams of wild character varieties form a subset, are of this type (Theorem \ref{thm:wildcharvty}). 

We also emphasize the following novel aspect of our framework.  
Each bicolored graph 
(equivalently, each 
alternating Legendrian) 
determines {\em at once} both the 
algebraic torus of local systems on the filling, and the 
larger moduli space of sheaves with microsupport
in the Legendrian.  This latter moduli space has a direct global definition 
which is a priori invariant under Legendrian isotopy, in particular under the square move. 
Thus we do not need to define it independently of the graph nor show by hand that it admits charts corresponding to arbitrary sequences of square moves. 
Instead, the fact that it possesses an atlas of toric charts related by cluster transformations is deduced 
in a universal fashion from our construction of Lagrangian fillings.

\vspace{2mm}
The fact that the fillings of a knot give rise to a cluster structure can be used to deduce
consequences in symplectic geometry.  We conclude the paper with one example.  

Sheaf quantization (or Floer theory) 
implies that Lagrangian fillings of Legendrians determine objects in a certain
category; one can in principle distinguish them by computing Homs.  In practice this can be somewhat unwieldy.
Here we observe that the cluster structures we have constructed allow a different argument.  Each Lagrangian
filling determines a chart on a moduli space, and we know that the relation of charts is governed by cluster algebra.  
That is, we can use results in cluster algebra to show that certain Lagrangians give different charts.  But, we know
that Hamiltonian isotopic fillings must give the same chart.  

In Section \ref{sec:rainspir} we show how to apply this, for example, to construct and distinguish fillings of arbitrary Legendrian positive braid closures in $\R^3$. In simple examples this recovers and distinguishes known fillings via purely sheaf-theoretic techniques. In \cite{EHK} a
Catalan number $C_n = \frac{1}{n+1}{2n \choose n}$ of fillings was constructed of the $(2,n)$ torus link. 
Our framework reconstructs these fillings and distinguishes them from one another --- their distinctness
corresponds to the combinatorial fact that the $A_{n-1}$ cluster structure has 
$C_n$ clusters \cite{FZ2}. Our approach also highlights the surgery relations among these, for example explicitly producing a surgery relating fillings of the $(2,n)$ link for each edge of the associahedron. Finally, we believe our constructions make clear that the combinatorics of cluster algebras should play a governing role in the theory of exact Lagrangian fillings. For example, the classification of which cluster algebras of Grassmannians are of finite type \cite{Sco} should be closely related to the classification of which Legendrian torus links have finitely many inequivalent fillings (some progress towards this question is made in \cite{STW}). 

\vspace{2mm}\noindent {\bf Acknowledgements.}
We would like to thank Sheng-Fu Chiu, Andy Neitzke, Xin Jin, Emmy Murphy and Lenny Ng for their help and contributions.
This work was supported in part by a SQuaRE program at the American Institute of Mathematics.
V. S. was supported by NSF grant DMS-1406871 and a Sloan fellowship.  D. T. was supported by NSF grant DMS-1510444.   H. W. was supported by an NSF Postdoctoral Research Fellowship DMS-1502845.  E. Z. was supported by NSF grant DMS-1406024.  
\section{Constructible sheaves, microlocalization, and moduli} \label{sec:moduli}

In this section we review the relevant background on constructible sheaves,
including their singular support, microlocalization, moduli spaces, and invariance under contact transformations.  We refer the reader to \cite{KS} for detailed foundations.  We note for the expert that here we adopt certain conventions adapted to the fact that we work throughout with Legendrians whose front projection is an immersion and we can therefore canonically
trivialize all Maslov obstructions. 

Throughout we fix a commutative ring $\coeffs$. For a manifold $X$ write $\Sh(X) := \Sh(X;\coeffs)$ for the differential graded derived category of constructible sheaves on $X$ -- the dg category of constructible sheaves of perfect $\coeffs$-modules on $X$, localized at the acyclic complexes. 
We refer to \cite{Kel,Toe3} for background on dg categories.
We write e.g. isomorphism instead of quasi-isomorphism when no confusion should arise.

\subsection{Singular support}
Given a sheaf $\cF \in \Sh(X)$, the \textbf{singular support} (or \textbf{microsupport}) $SS(\cF)$ is a closed, conic, Lagrangian subset of $T^*X$.  The singular support at infinity of $\cF$ is the Legendrian image of $SS(\cF)$
in the cocircle bundle $T^\infty X := (T^*X \setminus 0_X)/\R_+$, where $0_X \subset T^*X$ denotes the zero section.  These notions are meant to capture the locus in $T^*M$ of obstructions to the propagation of sections of $\cF$ (see \Cref{ex:A2,ex:crossing}). 
For instance, if $f: M \to \R$ is a function such that the graph of $df$ avoids $SS(\cF)$ over the 
locus $f^{-1}((a, b])$, then the restriction of sections is an isomorphism \cite[Prop. 5.2.1]{KS}:
 $$H^*(f^{-1}(-\infty, b], \cF) \xrightarrow{\sim} H^*(f^{-1}(-\infty, a], \cF)$$
The formal definition is a local version of the above criterion: 

\begin{definition} \label{def:microsupport} \cite[Chap. 5]{KS}
A point $p = (x, \xi) \in T^*X$ is in the microsupport of a sheaf $\cF$ if there are 
points $(x', \xi')$ arbitrarily close to $(x, \xi)$ and functions $f: M \to \R$ 
with $f(x') = 0, df(x') = \xi'$, such that the following property holds:
if $c_f: \{x\,|\, f(x) \ge 0\} \to M$ is the inclusion, then $(c_f^! \cF)_{x'} \ne 0$. 
\end{definition}

Shriek pullback to a closed subset gives the local sections  
supported on that subset.  Thus the statement $(c_f^! \cF)_{x'} \ne 0$ is informally
read as: ``there is a section of $\cF$ beginning at $x'$ and propagating in 
the direction along which $f$ increases.''  Note that, taking the zero function, the support
of $\cF$ is contained in its microsupport. 

\begin{definition}
Given a Legendrian $\Lambda \subset T^\infty X$, we write $\Sh_\Lambda(X) := \Sh_\Lambda(X;\coeffs)$ for the full subcategory of $\Sh_\Lambda(X)$ consisting of sheaves whose singular support at infinity is contained in $\Lambda$. Note that every locally constant sheaf belongs to $\Sh_{\Lambda}(X)$.  If $\sigma$ is a set of points in $X$ not meeting the front projection of $\Lambda$, we write $\Sh_\Lambda(X, \sigma)$ for the full subcategory of $\Sh_\Lambda(X)$ consisting of sheaves that vanish on $\sigma$.  
\end{definition}

The subcategories $\Sh_\Lambda(X)$, $\Sh_\Lambda(X, \sigma)$ are triangulated, since given a triangle $A \to B \to C \xrightarrow{[1]}$ we have $SS(C) \subset SS(A) \cup SS(B)$.  Any sheaf in $\Sh_\Lambda(X, \sigma)$ vanishes not only on $\sigma$, but on each component of $\Sigma\smallsetminus\pi(\Lambda)$ containing a point in $\sigma$.  

A key principle of \cite{KS} is that a sheaf $\cF$ localizes not just over $X$, but ``microlocalizes'' over the cotangent bundle $T^*X$ and its own singular support $SS(\cF)$ in particular.  There is a dg category $\mu loc(\Lambda)$, the category of microlocal sheaves on $\Lambda$, and a functor 
$$\mu: \Sh_{\Lambda}(X) \to \mu loc(\Lambda).$$

The category $\mu loc(\Lambda)$ is defined as follows.  Following \cite{KS}, for $\Omega \subset T^*X$ 
one takes the category $\mu sh^{pre}(\Omega) := Sh(X) / Sh_{T^*X \setminus \Omega}(X)$.  This is a presheaf of dg categories
on $T^*X$, we write $\mu sh$ for its sheafification.  One shows that the (micro) support of an object in $\mu sh(\Omega)$
is a well defined conical Lagrangian in $\Omega$, and that this construction is respected by restriction.  It follows that there is a subsheaf 
$\mu sh_\Lambda$ on
full subcategories on objects supported in $\Lambda$; we write $\mu loc (\Lambda) := \mu sh_\Lambda(\Lambda)$. 

The local sections $\mu sh_\Lambda$ can be understood explicitly in certain cases.  When 
$\Lambda \to X$ is finite, and $U \subset X$ is sufficiently small, then $\mu loc_\Lambda(U)$ is the quotient of $Sh_\Lambda(U)$ by
local systems on $U$.  This quotient is in turn equivalent to the subcategory of $Sh_\Lambda(U)$ of objects with stalk zero at 
any specified point (though the equivalence depends on the point).  I.e., in this case the presheaf-of-categories description remains 
true after sheafifying.  A sufficient condition that a chart is small enough is that the stratification is $C^1$-conical.  
One generally computes $\mu loc(\Lambda)$ by finding enough such charts and gluing.  (In case $\Lambda \to X$ is not finite, 
one can use the local invariance of sheaves under contact transformations to make it so.)

In particular, when $\Lambda$ is smooth, $\mu loc(\Lambda)$ is itself locally equivalent to $Loc(\Lambda)$.  
This is seen at a point by applying a contact transformation to make $\Lambda$ locally the conormal to a smooth hypersurface. 
In general, there is only a `Maslov' obstruction to globalizing this 
equivalence.  A trivialization $\mu loc(\Lambda) \cong Loc(\Lambda)$ is determined by a Maslov 
potential on $\Lambda$; see \cite{STZ} for a detailed discussion in the case where $\Lambda$ is one dimensional, or \cite{Gui} for a general account. 

In this paper we generally do not consider front projections with cusps.  As such we can
always take the zero Maslov potential, and identify $\mu loc(\Lambda) \cong Loc(\Lambda)$.  
Given a sheaf $\cF \in \Sh_\Lambda(X)$, we write $\cF|_\Lambda$ 
for its image in $Loc(\Lambda)$.

\begin{definition}
A sheaf $\cF \in \Sh_\Lambda(X)$ has \textbf{microlocal rank $n$} if the microlocalization 
of $\cF$ along $\Lambda$ is a local system of locally free $\coeffs$-modules of rank $n$ supported in degree zero.  We denote by $\cC_n( X, \Lambda)$ the full subcategory of $\Sh_\Lambda(X)$ consisting of microlocal rank-$n$ sheaves, similarly for $\cC_n(\Lambda,\sigma)$. 
When $X$ is fixed or clear from context,  we omit it from the notation.
\end{definition}

If $(x,\xi) \in \Lambda$ is a point, the \textbf{microlocal stalk} $\cF|_{(x,\xi)}$ of $\cF \in \Sh_\Lambda(X)$ is by definition the stalk of $\cF|_\Lambda$
at $(x, \xi)$. 
For a point {\em at which $\Lambda \to X$ is an immersion} it can be computed directly as follows.  Pick a function $f$ defined in a neighborhood of $x$
so that $\xi = df(x)$, as well as a small ball $U$ around $x$ and $\epsilon >0$.  Then $\cF|_{(x,\xi)}$ is the cone over the restriction map from $\Gamma(U \cap \{f<\epsilon\}; \cF)$ to $\Gamma(U \cap \{f<-\epsilon\}; \cF)$.  This does not depend on the precise choice of a sufficiently small $U$ and $\epsilon$.

\begin{figure}
  \centering
  
\begin{tikzpicture}
\newcommand*{\rad}{2};
\pgfmathsetmacro{\vrad}{\rad/1.6};
\newcommand*{\acol}{black}; 
\newcommand*{\scol}{blue}; 
\newcommand{\nhrs}{18}; 
\pgfmathsetmacro{\nhrsminusone}{\nhrs-1}; 
\newcommand*{\hhrln}{.13}; 
\newcommand*{\dhrln}{.1}; 
\newcommand*{\mdist}{5.5}; 

\node (a) [matrix] at (-\mdist,0) {
\draw[dashed] (0,0) circle (\rad);
\draw[asdstyle,righthairs] (0,0) to (90:\rad);
\draw[asdstyle,lefthairs] (0,0) to (-90:\rad);
\node (W) at (180:\rad/1.8) {$W$}; \node (E) at (0:\rad/1.8) {$E$};
\draw[genmapstyle] (E) to (W);\\};

\node (l) [matrix] at (0,0) {
\draw[dashed] (0,0) circle (\rad);
\foreach \ang in {-45,45} {\draw[asdstyle,righthairsnogap] (0,0) to (\ang:\rad);}
\foreach \ang in {135,225} {\draw[asdstyle,lefthairsnogap] (0,0) to (\ang:\rad);}
\node (S) at (-90:\vrad) {$S$}; \node (W) at (180:\vrad) {$W$};
\node (E) at (0:\vrad) {$E$}; \node (N) at (90:\vrad) {$N$};
\foreach \c/\d in {S/W,S/E,W/N,E/N} {\draw[genmapstyle] (\c) to (\d);}\\};

\node (r) [matrix] at (\mdist,0) {
\draw[dashed] (0,0) circle (\rad);
\foreach \ang in {-45,45} {\draw[asdstyle,righthairsnogap] (0,0) to (\ang:\rad);}
\foreach \ang in {135,225} {\draw[asdstyle,lefthairsnogap] (0,0) to (\ang:\rad);}
\node (S) at (-90:\vrad) {$0$}; \node (W) at (180:\vrad) {$\coeffs$};
\node (E) at (0:\vrad) {$\coeffs$}; \node (N) at (90:\vrad) {$\coeffs^2$};
\foreach \c/\d in {S/W,S/E,W/N,E/N} {\draw[genmapstyle] (\c) to (\d);}\\};
\end{tikzpicture}
  \caption{The local models of the sheaf categories we consider.  On the left is an open disk where $\Lambda \to D^2$ is a single embedded strand, and $\Sh_\Lambda(D^2) \cong \coeffs A_2\dmod$ as described in \Cref{ex:A2}.  In the middle $\Lambda \to D^2$ is two embedded strands crossing, and $\Sh_\Lambda(D^2) \cong \coeffs A_3\dmod$ as described in \Cref{ex:crossing}.  The rightmost picture illustrates a microlocal rank-one sheaf in this case.}
  \label{fig:crossing}
\end{figure}

Our Legendrians will generally be smooth 1-dimensional submanifolds of the cosphere bundle of a surface, 
which we denote by $\Sigma$ rather than $X$.  Except in Section \ref{sec:rainspir}, 
it will also be true that the projection $\Lambda \to \Sigma$ is an immersion, and moreover generic (i.e. without
triple points).  Thus the
sheaves we need to work with are locally of one of the following forms. 

\begin{example}\label{ex:A2}
Let $D^2$ be the open unit disk in $\R^2$ and $\Lambda = dx|_{\{x=0\}}$ the Legendrian whose front projection is the $y$-axis, cooriented to the right.  Then $\Sh_\Lambda(D^2)$ is equivalent to $\coeffs A_2\dmod$, the (dg derived) category of (perfect) representations of the $A_2$ quiver, as follows.  We write $W$ and $E$ for any stalks in the open left half-disk $\{x< 0\}$ and closed right half-disk $\{x \geq 0\}$, respectively (all stalks in either region are canonically isomorphic up to homotopy).  There is a generization map $E \to W$ given by restricting from a neighborhood of a point on the $y$-axis to a smaller open set lying entirely to the left of the $y$-axis (note the non-isomorphic restriction maps go ``against the grain'' of the covector in general).  The microlocal stalk at a point of $\Lambda$ is the cone over this map.  An example of a sheaf of microlocal rank one is $i_!\coeffs_{\{x<0\}}$, the extension by zero of the constant sheaf on the open left half-disk, which corresponds to $W = \coeffs$, $E = 0$.  
\end{example}  

\begin{example}\label{ex:crossing}
Let $D^2$ be the open unit disk in $\R^2$ and $\Lambda = (dx - dy)|_{\{x=y\}} \cup (-dx-dy)|_{\{x=-y\}}$ the Legendrian whose front projection is the union of the lines  $x=y$ and $x=-y$, co-oriented downwards (see \Cref{fig:crossing}).  Then $\Sh_\Lambda(D^2)$ can be described in terms of the dg category of quadruples $N$, $W$, $E$, $S$ of perfect complexes of $\coeffs$-modules, with a commuting square of maps as pictured.  Such data gives rise to an object of $\Sh_\Lambda(D^2)$ 
under the following \textbf{crossing condition}: the total complex $S\to W\oplus E\to N$ must be acyclic \cite[Theorem 3.12]{STZ}.  

The restrictions $\Sh_\Lambda(D^2) \to \Sh_\Lambda(D^2 \cap \{y>\epsilon\})$, $\Sh_\Lambda(D^2) \to \Sh_\Lambda(D^2 \cap \{y<-\epsilon\})$ to the regions above and below the $x$-axis are equivalences, the codomains of which can be identified with $Rep(\bullet \to \bullet \leftarrow \bullet)$
and $Rep(\bullet \leftarrow \bullet \to \bullet)$ by forgetting $S$, $N$, respectively.  The induced equivalence $\Sh_\Lambda(D^2 \cap \{y>\epsilon\}) \cong \Sh_\Lambda(D^2 \cap \{y<-\epsilon\})$ is a reflection functor.

An example of a sheaf of microlocal rank one is the direct sum $i_!\coeffs_{\{x+y>0\}} \oplus i_!\coeffs_{\{y-x>0\}}$, which has $S = 0$, $W = E = \coeffs$, and $N = \coeffs^2$.  The crossing condition here says that $N$ is the direct sum of the images of $W$ and $E$.  If $\sigma$ is any point in the bottom quadrant, this sheaf is in $\Sh_\Lambda(D^2,\sigma)$.  This is the only  object of $\cC_1(\Lambda,\sigma;\coeffs)$ if $\coeffs$ has no nontrivial invertible modules.
\end{example}

\subsection{Invariance under contact transformations}\label{sec:contactinv}

Invariance of the category $\cC_1(\Lambda,\sigma;\coeffs)$
under Legendrian isotopy follows from the main theorem of \cite{GKS}: 

\begin{theorem}
\cite{GKS} Let $M$ be a manifold and $\phi_t: T^\infty M \to T^\infty M$ be a 
contact isotopy.  Then there is a unique sheaf $\Phi$ on $M \times M \times [0, 1]$ 
which restricts to the constant sheaf on the diagonal at $M \times M \times \{0\}$ 
and whose microsupport at infinity is the graph of the contact isotopy. 
\end{theorem}

We will mostly use the following consequence.  
\begin{proposition} \label{prop:GKS}
A Legendrian isotopy $\Lambda \to \Lambda'$ supported in the complement of the conormal
to $\sigma$ induces an equivalence of categories $Sh_\Lambda(M, \sigma) \cong Sh_{\Lambda'}(M, \sigma)$. 
The equivalence induced by a composition of isotopies is the composition of the equivalences
induced by the isotopies. 
\end{proposition}
\begin{proof}
Recall that given an isotopy of smooth Legendrians, there is an ambient contact
isotopy which induces it.  One can chose such a contact isotopy to be supported in a neighborhood of the Legendrian
isotopy, and with this requirement said contact isotopy is unique up to homotopy.  Let $\Phi_t$ be the sheaf quantization of one such
isotopy.  

The theory of sheaf integral transforms developed in \cite{KS} implies that using
$\Phi_1$ as an integral kernel gives an equivalence of categories 
$Sh_\Lambda(M) \cong Sh_{\phi_1(\Lambda)}(M)$. 
If $U$ is a neighborhood of $\sigma$ the support assumption on the Legendrian isotopy implies that $\Phi$ restricts to the constant sheaf on $U \times U \times [0,1]$, hence in particular the transform given by $\Phi_1$ preserves the stalk at $\sigma$. 

To see that this functor does not depend on the choice of contact isotopy inducing the Legendrian isotopy,
consider two and connect them by an homotopy of contact isotopies.  The sheaf quantization provides
a kernel $\Phi_{t, s}$ on $M \times M \times I \times I$.  This is {\em not} constant in the final $I$ direction; 
however when applied to elements in $Sh_\Lambda(M)$ it produces sheaves 
on $M \times I \times I$ whose microsupport is in the movie of $\phi_t(\Lambda)$ times a trivial factor
in the final direction.  It follows that these sheaves are constant in the final direction, and hence that
the functors induced by $\Phi_{t, 0}$ and $\Phi_{t,1}$ are the same.  

Having shown this, the uniqueness of $\Phi_t$ for a given contact isotopy implies the functoriality of this correspondence
asserted above. 
\end{proof}

\subsubsection*{Reidemeister Moves}

Since the GKS equivalence is compatible with composition of isotopies, 
in order to compute the equivalence $\cC_1(\Lambda,\sigma;\coeffs)\cong \cC_1(\Lambda',\sigma;\coeffs)$
associated to general Legendrian isotopy, it is enough to determine the equivalences associated to Legendrian Reidemeister moves.  The ones relevant to our immediate purposes are pictured in \Cref{fig:rmove,fig:reid}.   Like all isotopy equivalences, these are determined by the kernels constructed in \cite{GKS}. 
However, in these simple cases the equivalences are determined 
by the property that they restrict to the identity on the boundary of the picture, and can be described explicitly in terms of quiver representations. 

\begin{proposition}\label{prop:reid}\cite{STZ}
Let $\Lambda$, $\Lambda'$ be a pair of Legendrians in $T^\infty D^2$ differing by a Legendrian Reidemeister move, as in \Cref{fig:rmove,fig:reid} or \cite[Sec. 4.4]{STZ}.  There is a unique equivalence $\Sh_\Lambda(D^2) \cong \Sh_{\Lambda'}(D^2)$ that restricts to the identity of the boundary of the disk.
\end{proposition}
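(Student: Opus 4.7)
The plan is to prove existence by invoking the GKS equivalence of \Cref{prop:GKS}, and then prove uniqueness by a rigidity argument based on the explicit quiver-theoretic models of $\Sh_\Lambda(D^2)$ in the neighborhoods where the move takes place.

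For existence, choose a Legendrian isotopy from $\Lambda$ to $\Lambda'$ that is compactly supported in the interior of $D^2$ and realizes the Reidemeister move. \Cref{prop:GKS} produces an equivalence $\Sh_\Lambda(D^2) \xrightarrow{\sim} \Sh_{\Lambda'}(D^2)$; because the GKS kernel is the identity kernel on any open set where the isotopy is trivial, this equivalence restricts to the identity on a collar neighborhood of $\partial D^2$.

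For uniqueness, suppose $F, F': \Sh_\Lambda(D^2) \xrightarrow{\sim} \Sh_{\Lambda'}(D^2)$ are two such equivalences. Then $G := F' \circ F^{-1}$ is an autoequivalence of $\Sh_{\Lambda'}(D^2)$ which restricts to the identity on a collar $A$ of $\partial D^2$, and it suffices to produce a canonical natural isomorphism $G \simeq \mathrm{id}$. The key point is that for each Reidemeister picture, the category $\Sh_{\Lambda'}(D^2)$ admits an explicit description as a category of quiver representations subject to the crossing conditions, exactly of the kind exhibited in \Cref{ex:A2} and \Cref{ex:crossing}. In each such description the data of an object consists of: the stalks in the regions cut out by the front projection, together with generization maps across each strand and crossing relations at each crossing. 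All of this data is determined along $A$ by the restriction functor, and the extension of this boundary data into the interior is uniquely determined up to canonical isomorphism by the quiver relations (including the acyclic total complex condition at crossings). Consequently, for every object $\cF$ the sheaves $G(\cF)$ and $\cF$ have identical boundary data in $A$, share the same interior quiver presentation, and so there is a unique isomorphism $G(\cF) \cong \cF$ compatible with the boundary identification; these assemble into the required natural isomorphism $G \simeq \mathrm{id}$.

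The main obstacle is verifying the rigidity statement for each of the Reidemeister moves in \cite[Sec. 4.4]{STZ}: one must exhibit, for each picture, a presentation of $\Sh_{\Lambda'}(D^2)$ in which the extension from the boundary to the interior is unique up to canonical isomorphism. For Reidemeister II and III this is an immediate consequence of the local models in \Cref{ex:A2} and \Cref{ex:crossing}, since the relevant quivers are of finite representation type and the crossing/acyclicity conditions rigidify the interior. The subtler cases involve moves with cusps, where one must confirm that the correct microlocal data at the cusp (recorded by a single generization map whose cone is the microlocal stalk) is again determined by its behavior near $\partial D^2$; this bookkeeping is carried out in \cite{STZ} and can be imported directly.
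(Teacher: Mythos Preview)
Your argument is correct, but it takes a longer route than the paper. You use GKS for existence and then a separate rigidity argument for uniqueness. The paper instead observes a single fact that delivers both at once: for each Reidemeister picture, the restriction functor from $\Sh_\Lambda(D^2)$ (and likewise from $\Sh_{\Lambda'}(D^2)$) to the boundary category is \emph{fully faithful}, with the same essential image on both sides. This follows by iterating the observation in \Cref{ex:crossing} that restriction from a crossing neighborhood to either its top or bottom region is an equivalence. Once you know both restrictions are equivalences onto the same subcategory of the boundary, the desired equivalence is simply $R_{\Lambda'}^{-1}\circ R_\Lambda$, and uniqueness is immediate. No GKS kernel is needed.

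Your uniqueness argument is really this same full-faithfulness statement in disguise, but the phrasing in terms of ``finite representation type'' is a red herring: what matters is not that the quivers are Dynkin, but that the crossing condition (acyclicity of $S\to W\oplus E\to N$) makes the restriction to the upper or lower regions an equivalence. That is the mechanism by which the interior is rigidly determined by the boundary; I would state it that way rather than appeal to representation type. What your approach does buy you is an explicit connection to GKS, confirming that the abstractly defined equivalence agrees with the isotopy-induced one, which is useful context even if not logically necessary for the proposition as stated.
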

\begin{proof}
In all cases, the restrictions to the boundary of $\Sh_\Lambda(D^2)$ and $\Sh_{\Lambda'}(D^2)$ are fully faithful with the same essential image.  This follows from the fact that restriction from sheaves on a neighborhood of the crossing pictured in \Cref{fig:crossing} to the top and bottom regions is an equivalence; see \Cref{ex:crossing}.
\end{proof}

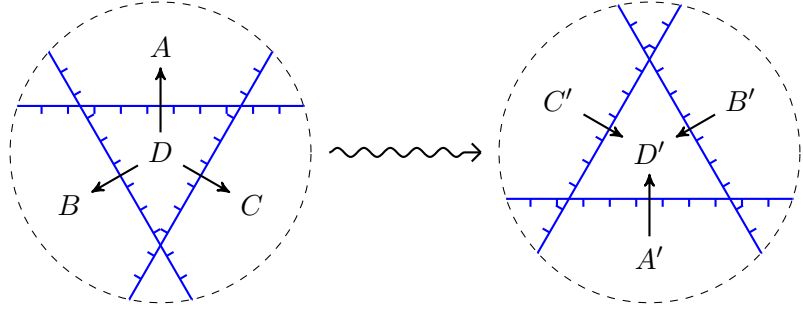
\begin{figure}
  \centering
\begin{tikzpicture}
\newcommand*{\off}{12}; \newcommand*{\rad}{2};
\node (tl) [matrix] at (0,0) {
\draw[dashed] (0,0) circle (\rad);
\coordinate (l1a) at (150+\off:\rad); \coordinate (l1b) at (30-\off:\rad);
\coordinate (l2a) at (150-\off:\rad); \coordinate (l2b) at (-90+\off:\rad);
\coordinate (l3a) at (-90-\off:\rad); \coordinate (l3b) at (30+\off:\rad);
\coordinate (l1m) at ($(l1a)!.5!(l1b)$); \coordinate (l2m) at ($(l2a)!.5!(l2b)$);
\coordinate (l3m) at ($(l3a)!.5!(l3b)$);
\foreach \c/\d in {l1m/l1a,l2m/l2b,l3m/l3b} {\draw[asdstyle,lefthairs] (\c) to (\d);}
\foreach \c/\d in {l1m/l1b,l2m/l2a,l3m/l3a} {\draw[asdstyle,righthairs] (\c) to (\d);}
\foreach \n/\ang in {A/90,B/210,C/330} {\node (\n) at (\ang:.7*\rad) {$\n$};}
\node (D) at (0,0) {$D$};
\foreach \n in {A,B,C} {\draw[genmapstyle] (D) to (\n);}\\};

\node (tr) [matrix] at (6.5,0) {
\draw[dashed] (0,0) circle (\rad);
\coordinate (l1a) at (210-\off:\rad); \coordinate (l1b) at (-30+\off:\rad);
\coordinate (l2a) at (90+\off:\rad); \coordinate (l2b) at (-30-\off:\rad);
\coordinate (l3a) at (210+\off:\rad); \coordinate (l3b) at (90-\off:\rad);
\coordinate (l1m) at ($(l1a)!.5!(l1b)$); \coordinate (l2m) at ($(l2a)!.5!(l2b)$);
\coordinate (l3m) at ($(l3a)!.5!(l3b)$);
\foreach \c/\d in {l1m/l1a,l2m/l2b,l3m/l3b} {\draw[asdstyle,lefthairs] (\c) to (\d);}
\foreach \c/\d in {l1m/l1b,l2m/l2a,l3m/l3a} {\draw[asdstyle,righthairs] (\c) to (\d);}
\foreach \n/\ang in {A/-90,B/30,C/150} {\node (\n) at (\ang:.7*\rad) {$\n'$};}
\node (D) at (0,0) {$D'$};
\foreach \n in {A,B,C} {\draw[genmapstyle] (\n) to (D);}\\};
\coordinate (rw) at ($(tr.west)+(-1mm,0)$); \coordinate (le) at ($(tl.east)+(1mm,0)$);
\draw[arrowstyle] (le) -- (rw);
\draw[arrhdstyle] ($(rw)+(-1.2mm,1.2mm)$) -- (rw); \draw[arrhdstyle] ($(rw)+(-1.2mm,-1.2mm)$) -- (rw);
\end{tikzpicture}

\caption{The Legendrian Reidemeister-III considered in \Cref{lem:R3}. Sheaves microsupported on the respective Legendrians are equivalent to representations of oppositely-oriented $D_4$-quivers. Writing $s_A$, etc. for reflection functors, the equivalence between the two sides is given by the composition $s_D s_A s_B s_C s_D$.}\label{fig:rmove}
\end{figure}

We will be interested in having explicit descriptions of the monodromy of the microlocal stalks along our
Legendrians, in order to show that certain Legendrian isotopies give rise to cluster transformations in \Cref{thm:clustertrans}.  
To follow these through a Reidemeister-III move, we note the following: 

\begin{lemma}\label{lem:R3}
In the Legendrian Reidemeister-III, on a given component of the knot, all microstalks on the component
before the move are canonically identified; all microstalks after the move are canonically identified, 
and there is a canonical identification of these canonical identifications. 
%
\end{lemma}
\begin{proof}
Before and after, each component of the microsupport is contractible, hence carries only trivial local systems, hence
all microstalks are identified.  To identify the before and after stalks, consider the quantization of the isotopy.  
It produces a sheaf on a cylinder $D^2 \times I$.  Each component of the microsupport
of this sheaf is contractible.
\end{proof}

\begin{figure}
\begin{tikzpicture}
\newcommand*{\off}{25};\newcommand*{\rad}{2.0}; \newcommand*{\vrad}{1.0};
\newcommand*{\fac}{.5};
\node (r) [matrix] at (6.5,0) {
\coordinate (ur) at ($(-\off:\rad)!\fac!(-180+\off:\rad)$); \coordinate (lr) at ($(\off-180:\rad)!\fac!(-\off:\rad)$);
\coordinate (ul) at ($(\off:\rad)!\fac!(-180-\off:\rad)$); \coordinate (ll) at ($(-\off-180:\rad)!\fac!(\off:\rad)$);
\coordinate (mr) at ($(ur)!.5!(lr)$); \coordinate (ml) at ($(ul)!.5!(ll)$);
\draw[dashed] (0,0) circle (\rad);
\draw[asdstyle,lefthairs] (mr) to (ur) [out=0,in=180] to (\off:\rad);
\draw[asdstyle,righthairs] (mr) to (lr) to[in=0,out=180] (180-\off:\rad);
\draw[asdstyle,righthairs] (ml) to (ul) [out=0,in=180] to (-\off:\rad);
\draw[asdstyle,lefthairs] (ml) to (ll) to[in=0,out=180] (180+\off:\rad);
\node (A) at (90:.75*\rad) {$A$}; \node (B) at (-90:.75*\rad) {$B$};
\node (cm) at (0,0) {$C'$};
\foreach \n in {A,B} {\draw[genmapstyle] (cm) to (\n);}\\};

\node (l) [matrix] at (0,0) {
\draw[dashed] (0,0) circle (\rad);
\coordinate (br) at (-\off:\rad); \coordinate (bl) at (\off-180:\rad);
\coordinate (ur) at (\off:\rad); \coordinate (ul) at (-\off-180:\rad);
\coordinate (bm) at ($(br)!.5!(bl)$); \coordinate (um) at ($(ul)!.5!(ur)$);
\foreach \c/\d in {um/ur,bm/bl} {\draw[asdstyle,lefthairs] (\c) to (\d);}
\foreach \c/\d in {um/ul,bm/br} {\draw[asdstyle,righthairs] (\c) to (\d);}
\node (A) at (90:.75*\rad) {$A$}; \node (B) at (-90:.75*\rad) {$B$};
\node (cm) at (0,0) {$C$};
\foreach \n in {A,B} {\draw[genmapstyle] (\n) to (cm);}\\};
\coordinate (rw) at ($(r.west)+(-1mm,0)$); \coordinate (le) at ($(l.east)+(1mm,0)$);
\draw[arrowstyle] (le) -- (rw);
\draw[arrhdstyle] ($(rw)+(-1.2mm,1.2mm)$) -- (rw); \draw[arrhdstyle] ($(rw)+(-1.2mm,-1.2mm)$) -- (rw);
\end{tikzpicture}
\caption{Sheaves microsupported on the Legendrians on either side of Reidemeister-II are equivalent to representations of oppositely-oriented $A_3$-quivers. The equivalence between the two sides is given by the reflection functor $s_C$; in other words, $C' = Cone(A\oplus B \to C)[-1]$.}
\label{fig:reid}
\end{figure}
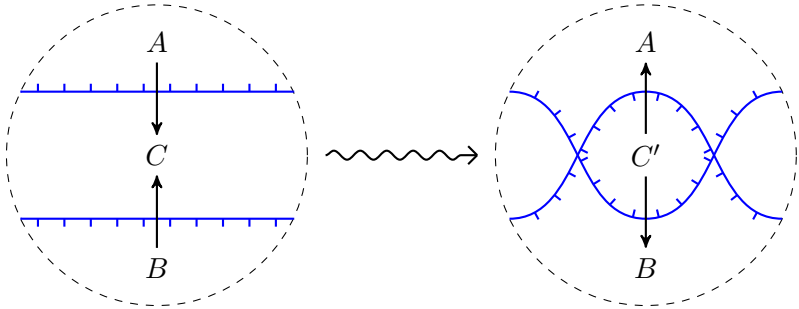

\subsection{Quantization of Lagrangians}

Let $L \subset T^*M$ be an eventually conical embedded exact Lagrangian with vanishing Maslov obstruction and with Legendrian boundary $\partial L \subset T^\infty M$. 
After a Hamiltonian perturbation we can make $L$ lower exact (i.e. have a proper, bounded above primitive) \cite{JinTreumann}, 
lift it to a Legendrian $\widetilde{L} \subset J^1 M$, and then embed $J^1 M \hookrightarrow T^\infty (M \times \R)$. 
Microlocalization gives a map $Sh_{\widetilde{L}}(M \times \R) \to \mu loc(\widetilde{L})$, and by assumption we can trivialize $\mu loc(\widetilde{L}) \cong Loc(L)$.
Restriction to $M = M \times \{r\}$ for $r \ll 0$ gives a map $Sh_{\widetilde{L}}(M \times \R) \to Sh_{\partial L}(M)$. 

Unlike those we usually work with, a general Legendrian in $J^1 M$ need not have locally trivial Maslov obstruction.
However except in this section we won't explicitly discuss this Legendrian lift, so no confusion should arise.  

\begin{theorem}\label{thm:sheafquantization}\cite{Gui, JinTreumann} 
The left functor in $Loc(L) \leftarrow Sh_{\widetilde{L}}(M \times \R, M \times \{\infty\}) \rightarrow Sh_{\partial L}(M)$ is an equivalence and the right is fully faithful. The composite thus gives a fully faithful embedding of the category of local systems on $L$ into the category $Sh_{\partial L}(M)$. \end{theorem}

We say the objects in the image of this functor are obtained from $L$ by sheaf quantization. Sheaf quantization transforms naturally under compactly supported Hamiltonian isotopy of $L$, since such becomes
a Legendrian isotopy of $\widetilde{L}$. In particular we have the following.

\begin{lemma} The image of $Loc(L)$ in $Sh_{\partial L}(M)$ is invariant under compactly supported Hamiltonian isotopy.
\end{lemma}

It is useful to observe the following additional property.

\begin{lemma}
Let $\cF$ be the sheaf on
$M$ corresponding to a rank one local system on $L$.  
If $m \in M$ is a point over which the projection $\pi: L \to M$ is locally an $n$-sheeted cover, then $\cF_m$ can be computed by a complex with 
$n$ generators.
\end{lemma}
\begin{proof}
Let $\widetilde{\cF}$ be the corresponding sheaf on $M \times \R$.  Consider the line $m \times \R$.  This line
meets the front projection of $\widetilde{L}$ transversely.  These intersections are in bijection with the intersection
of the cotangent fiber of $M$.  At an intersection point $m \times l$, one has a triangle 
$$\widetilde{F}_{m \times (l \pm \epsilon)} \to \widetilde{F}_{m \times (l \mp \epsilon)} \to \coeffs[d] \xrightarrow{[1]}$$
for some $d \in \Z$. 
We have $\widetilde{F}_{m \times -\infty} = 0$, so the result follows by induction. 
\end{proof}

In fact for us only the following consequence will directly appear, which doesn't require the induction step.

\begin{corollary}\label{cor:quantranks}
Let $\cF$ be the sheaf on
$M$ corresponding to a rank one local system on $L$.  
Let $\pi: L \to M$ be the projection.  
If $\pi^{-1}(m) = \emptyset$, then $\cF_m = 0$, and if $\pi^{-1}(m) = 1$ then $\cF_m \cong \coeffs[d]$ for some $d \in \Z$. 
\end{corollary}

\subsection{Moduli spaces}\label{sec:derived}
In studying microlocal rank-one objects of
$Sh_\Lambda(\Sigma)$, we necessarily consider objects which are honestly complexes
of sheaves, rather than simply sheaves.  The resulting subcategory is not abelian in general --- in particular, objects
may have negative self-extensions, even for Legendrian knots in the standard contact 
$\R^3$; examples can be found in \cite{STZ}.  The correct
setting for studying moduli spaces of objects in such dg categories 
is 
derived algebraic geometry \cite{TVa}.  For background we refer to the survey \cite{Toe2} and the foundational
works \cite{Lur, Toe, TV1, TV2, TV3}.  

\begin{definition/proposition} \label{def:moduli}
Let $X$ be the interior of a compact manifold with boundary, $\Lambda \subset T^\infty X$ a Legendrian contained in the spherically projectivized conormals of a Whitney stratification that extends to the boundary, and 
$\sigma$ a collection of points in $X$.  We write $\R\cM(X, \Lambda,\sigma)$ for 
the moduli of objects in 
$Sh_\Lambda(X,\sigma)$. It is a locally geometric derived stack. We write $\R \cM_n(\Sigma, \Lambda,\sigma)$ for the substack
parameterizing sheaves of microlocal rank $n$. 
\end{definition/proposition}
\begin{proof}
The existence of these spaces is guaranteed by 
\cite{TVa}, which constructs derived moduli stacks of pseudoperfect modules of finite-type dg categories (that is, of functors from finite-type dg categories to categories of perfect modules).
The finite-type category in question is that of wrapped constructible sheaves on $X$ microsupported on $\Lambda$; this is the full subcategory of compact objects in the cocomplete dg category of all sheaves microsupported on $\Lambda$ (i.e. with not-necessarily perfect stalks) \cite{N6}. Taking $\Hom$ spaces identifies the category $Sh_\Lambda(X,\sigma)$ of sheaves with perfect stalks as pseudoperfect modules of the wrapped category \cite{N6}. 
The assumptions on $X$ and $\Lambda$, together with the results of \cite{N5} adapted to the setting of exact Lagrangians, imply that the wrapped sheaf category in question is a finite colimit of finite-type dg categories (specifically, of categories of perfect representations of acyclic quivers). The claim now follows since a finite colimit of finite-type dg categories is again finite type.
\end{proof}

The higher and derived structures on the spaces $\R \cM_n(X, \Lambda, \sigma)$ are essential
from various points of view: for instance, to get meaningful point counts \cite{NRSS}
and to construct symplectic structures on these spaces \cite{ShT} as was done for
moduli of local systems in \cite{PTVV}. An important point is that the infinitesimal study of derived moduli spaces is generally more accessible than that of ordinary moduli spaces. For example, letting $\R Loc(L)$ denote the derived moduli space of local systems on $L$ we have following consequence of Theorem \ref{thm:sheafquantization}. 

\begin{proposition}\label{prop:open}
Let $L \subset T^* X$ be an embedded eventually conical exact Lagrangian with Legendrian boundary $\partial L = \Lambda$ and whose projection is disjoint from $\sigma \subset X$. 
Given a trivialization $\mu loc(\widetilde{L}) \cong Loc(L)$, sheaf quantization induces an open inclusion $\R Loc_n(L) \to \R \cM_n(X, \Lambda, \sigma)$. 
\end{proposition}
\begin{proof}
This follows formally from the fact that the map on moduli spaces is induced by a
fully faithful inclusion of dg categories.  Indeed, it follows from this 
that the morphism is injective on points, and since 
the tangent complexes to the moduli spaces are given by self-ext algebras \cite[Thm 0.2]{TVa}, it follows that the map is \'etale. 
\end{proof}


When $L$ is a Lagrangian \emph{surface} and $\Lambda$ nonempty, $L$ is homotopy equivalent to a wedge of circles. 
In this case the derived stack $\R Loc_n(L)$ is truncated: it is isomorphic to its truncation $t_0 \R Loc_n(L)$ -- which is simply the classical Artin stack $\Loc_n(L)$ of local systems -- regarded as a derived stack. 
Thus if we are only interested in branes supported on Lagrangians of this kind, and the relations among them, we lose no information by working at the level of Artin stacks in the classical sense. Following \cite[Sec. 3.4]{TVa} we have the following classical moduli spaces.

\begin{definition/proposition}
Let $\Sigma$ be a surface, $\Lambda \subset T^\infty \Sigma$ a nonempty Legendrian, $\sigma$ a collection of points in $\Sigma$. We write $\cM(\Sigma,\Lambda,\sigma)$ for the 1-rigid locus of $t_0 \R Loc(\Sigma, \Lambda,\sigma)$; that is, the locus parametrizing objects without negative self-extensions. It is an Artin stack in the classical sense. We write $\cM_n(\Sigma,\Lambda,\sigma)$ for the substack parametrizing sheaves of microlocal rank $n$.
\end{definition/proposition}

In many cases of interest the objects parameterized
by $\R \cM_n(\Sigma, \Lambda, \sigma)$ are ordinary sheaves (i.e. not complexes). For example, this holds for the $\beta$-filtered local systems studied in \Cref{sec:braids}, see \Cref{prop:abeliancat}.
Hence these objects live in an abelian category, and in particular have no negative self-extensions to begin with --- so $\cM_n(\Sigma, \Lambda, \sigma)$ is equal to $t_0 \R \cM_n(\Sigma, \Lambda, \sigma)$. 
Since the truncation of an \'etale map is \'etale \cite[Sec. 2.2.4]{TV3}, we also have the underived analogue of Proposition \ref{prop:open}.

\begin{proposition}
Let $L \subset T^* \Sigma$ be an exact Lagrangian surface with $\partial L = \Lambda$ and whose projection is disjoint from $\sigma \subset \Sigma$. 
Given a choice of brane structure on $L$, the functor $\cN$ induces an open inclusion $Loc_n(L) \to \cM_n(\Sigma, \Lambda, \sigma)$. 
\end{proposition}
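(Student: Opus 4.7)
The plan is to deduce this statement as the underived shadow of \Cref{prop:open} by combining three ingredients already in play: the open inclusion of derived moduli, the truncatedness of $\R Loc_n(L)$ for $L$ a surface with nonempty boundary, and compatibility of $\cN$ with Ext-groups which will force the image to land in the $1$-rigid locus.

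First I would invoke \Cref{prop:open} to obtain an open inclusion of derived stacks $\R Loc_n(L) \hookrightarrow \R\cM_n(\Sigma, \Lambda, \sigma)$. Since $\partial L = \Lambda$ is nonempty and $L$ is a surface, $L$ retracts onto a $1$-dimensional CW-complex (a wedge of circles glued to arcs), so $\Ext^{>1}$ and $\Ext^{<0}$ between local systems on $L$ vanish. As noted in the paragraph preceding the statement, this yields that $\R Loc_n(L)$ is classically truncated and agrees with the ordinary Artin stack $Loc_n(L)$, viewed as a derived stack.

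Next I would truncate the derived open inclusion. By the citation \cite[Sec.~2.2.4]{TV3} already invoked, the truncation of an \'etale map is \'etale, so $t_0 \R Loc_n(L) = Loc_n(L)$ maps \'etale to $t_0 \R\cM_n(\Sigma, \Lambda, \sigma)$. It remains to show this \'etale map factors through the $1$-rigid open substack $\cM_n(\Sigma, \Lambda, \sigma) \subset t_0 \R\cM_n(\Sigma, \Lambda, \sigma)$. Since $\cN$ is a fully faithful embedding of dg categories, it preserves $\Ext$-groups; the $\Ext$-groups between local systems on $L$ are concentrated in degrees $0$ and $1$, so in particular the image sheaves have no negative self-extensions. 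Hence the image lies in $\cM_n(\Sigma, \Lambda, \sigma)$.

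Finally, to upgrade from \'etale to open inclusion I would note, as in the proof of \Cref{prop:open}, that the underlying functor on objects is injective on isomorphism classes because $\cN$ is fully faithful; together with \'etaleness this forces the map to be an open immersion of stacks. The only point requiring genuine care is the bookkeeping around truncation versus $1$-rigidity, but since both obstructions amount to the vanishing of negative self-Exts, which is automatic on the source, no further work is needed.
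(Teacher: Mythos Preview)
Your proposal is correct and follows essentially the same approach as the paper. The paper does not give a separate proof at all: it simply states the proposition as an immediate consequence of the preceding discussion (that $\R Loc_n(L)$ is truncated when $L$ is a surface with nonempty boundary) together with the fact that truncation of an \'etale map is \'etale. Your write-up fills in the details the paper leaves implicit, in particular the check that the image lands in the $1$-rigid locus defining $\cM_n(\Sigma,\Lambda,\sigma)$ inside $t_0\R\cM_n(\Sigma,\Lambda,\sigma)$.
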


\begin{remark} \label{rem:gohere}
Explicitly, $\cM_n(\Sigma, \Lambda, x)$ represents the functor from commutative rings to 
groupoids taking $\coeffs$ to $\cC_n(\Lambda,\sigma; \coeffs)^{gpd}$, the groupoid whose objects are sheaves in $\cC_n(\Lambda,\sigma;\coeffs)$ without negative self-extensions and whose morphisms are quasi-isomorphisms up to homotopy, with pullback defined by base change. 
\end{remark}

We also need to consider moduli spaces of framed sheaves, constructed as follows.  Let $U$ be an open subset of $S$ and $\cF_\Z$ an object of $\Sh_{\Lambda|_U}(U, \sigma \cap U; \Z)$.  The sheaf $\cF_\Z$ defines a map $\Spec \Z \to \cM_1(U,\Lambda|_U)$.

\begin{definition}\label{def:framed} 
The moduli space $\cM_{\:1}^{fr}(\Sigma,\Lambda,\sigma)$ of framed sheaves of microlocal rank one is the fiber product
\[
\begin{tikzpicture}
\node (tl) at (0,0) {$\cM_{\:1}^{fr}(\Sigma,\Lambda,\sigma)$}; \node (tr) at (4,0) {$\cM_1(\Sigma,\Lambda,\sigma)$};
\node (bl) at (0,-1) {$\Spec \Z$}; \node (br) at (4,-1) {$\cM_1(U,\Lambda|_U$),};
\draw[->] (tl) to (tr); \draw[->] (tr) to (br);
\draw[->] (tl) to (bl); \draw[->] (bl) to (br);
\end{tikzpicture}
\]
where the right-hand map is restriction to $U$ and the bottom is the inclusion of $\cF$.
\end{definition}

The $\coeffs$-points of $\cM_{\:1}^{fr}(\Sigma,\Lambda,\sigma)$ are thus objects of $\cC_1(\Lambda,\sigma;\coeffs)$ together with an isomorphism of their restriction to $U$ with $\cF_\coeffs$, the object of $\Sh_{\Lambda|_U}(U, \sigma \cap U; \coeffs)$ obtained from $\cF_\Z$ by base change.  In practice $\cF$ will always be chosen in some trivial way, hence we omit it from the notation for $\cM_{\:1}^{fr}(\Sigma,\Lambda,\sigma)$. Proposition \ref{prop:GKS} extends in the obvious way to the framed moduli spaces. 
\section{Microlocally abelian moduli problems}\label{sec:braids}

Here we focus our attention on a class of moduli problems $\cM_1(\Sigma, \Lambda, \{\sigma_i\})$
in which the link $\Lambda$ is a disjoint union of positive $n$-strand braids, one placed in
a neighborhood of a co-circle over each $\sigma_i$. 
As we detail in this section, these spaces include various 
ones of current interest, in particular:

\begin{itemize}
\item Positroid strata in the Grassmannian  \cite{Po}
occur when a single braid is placed on a sphere, for particular choices of braid.
See Sec. \ref{sec:positroids4life}. 
\item More generally, placing a single braid on a sphere gives rise to a
certain moduli of flag configurations, whose point-count gives a 
term of the HOMFLY polynomial of 
the braid \cite[Sec. 6]{STZ}. 
\item Moduli spaces of local systems with monodromy-invariant filtrations in the case when all the braids
are trivial (see Ex. \ref{ex:filtered}), and more generally any moduli space of monodromy and Stokes data, i.e., 
any wild character variety (see Sec. \ref{sec:wild}).
\end{itemize} 

Moreover, if we are ultimately interested in cluster structures related to the moduli space of 
rank $n$ local systems on a punctured surface, we are forced to consider spaces of the above kind. 
Most naturally, rank-$n$ local systems correspond to sheaves with microlocal
rank $n$ along a collection of circles around the punctures, and vanishing at the punctures. 
However, our expected
sources of cluster charts are {\em abelian} Lagrangian branes, which determine sheaves of microlocal rank one along their Legendrian boundary.  Thus
as a preliminary to abelianization of the rank $n$ local systems, we must perform a microlocal
abelianization of the boundary condition --- i.e., replacing the circle labelled by $n$ with 
an $n$-strand braid.  


\subsection{Microlocal abelianization}\label{sec:microab}

Consider a surface $\Sigma$ with a set $\sigma = \{\sigma_i\}$ of marked points.  Let $\Lambda_i$
be a small circle around $\sigma_i$, co-oriented inward, and $D_i$ the disk around
$\sigma_i$ whose boundary is the front projection of $\Lambda_i$.  Consider the inclusions
$$ \Sigma \smallsetminus \sigma \overset{r} \hookleftarrow \Sigma \smallsetminus \bigcup \overline{D}_i \overset{j}\hookrightarrow \Sigma$$ 
These induce an equivalence
$$j_! r^*: Loc(\Sigma \smallsetminus \sigma;\coeffs) \to Sh_{\bigcup \Lambda_i}(\Sigma,\sigma;\coeffs)$$
between the categories of local systems on the punctured curve and of sheaves on the complete curve with
microsupport in the circles and vanishing stalks at the points.  The equivalence carries
the rank of the local system to the rank of the microstalk on any $\Lambda_i$. 

It is the condition that the sheaves should have {\em rank-one microstalks} 
that gives rise to cluster structures.  The moduli space corresponding to 
local systems of rank $n$ does not have this property.  We get ones which do by 
replacing the circle labelled by $n$ with a suitable $n$-strand satellite.  

By definition, the satellite construction takes as input data a triple
$(V, \Lambda, \beta)$ where $V$ is a contact manifold,  $\Lambda$ is a 
Legendrian  and $\beta$ is a Legendrian in the 1-jet bundle
$J^1(\Lambda)$. The output is a new Legendrian $\beta \looparrowright \Lambda$
in the same contact manifold $V$, formed by 
replacing a standard neighborhood of $\Lambda$ by the $J^1(\Lambda)$ containing
 $\beta$.   The Legendrian $\beta$ is the {\em pattern} of the satellite construction. 
For some discussion and examples, see \cite{Ng-sat}. 

\begin{lemma}
Let $M$ be a manifold, $\Lambda \subset T^\infty M$ a Legendrian,
and $\beta \subset J^1(\Lambda)$ a Legendrian. 
Assume that $\beta \to \Lambda$ is a covering map.  
Then there is a natural morphism
$$\pi: Sh_{\beta \looparrowright \Lambda}(M) \to Sh_{\Lambda}(M)$$ 
such that $$rank_{\Lambda}(\pi(\cF)) = \deg(\beta \to \Lambda) \cdot 
rank_{\beta \looparrowright \Lambda}(\cF)$$
\end{lemma}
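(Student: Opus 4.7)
The construction of $\pi$ proceeds by working locally along $\Lambda$ and patching. Since $\beta \to \Lambda$ is a cover of degree $n := \deg(\beta \to \Lambda)$, a small tubular neighborhood $V$ of $\Lambda$ in $T^\infty M$ identifies with a neighborhood of the zero section in $J^1(\Lambda)$, and the satellite $\beta \looparrowright \Lambda$ appears in $V$ as $n$ cooriented strands parallel to $\Lambda$. In local coordinates $(u, y)$ on $M$ where $\pi(\Lambda) = \{y = 0\}$ is cooriented by $dy$ and the strands of $\pi(\beta \looparrowright \Lambda)$ are $\{y = \epsilon_i\}$ for small distinct $\epsilon_1, \ldots, \epsilon_n$ (each cooriented by $dy$), a sheaf $\cF \in \Sh_{\beta \looparrowright \Lambda}(M)$ is equivalent to the data of local systems $L_0, L_1, \ldots, L_n$ on the $n+1$ regions separated by the strands, together with generization maps $L_0 \to L_1 \to \cdots \to L_n$ going against the common coorientation; the microstalk of $\cF$ at the $i$th strand is $\mathrm{Cone}(L_{i-1} \to L_i)$.

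I plan to define $\pi(\cF)$ locally as the $\Lambda$-constructible sheaf corresponding to the pair $(L_0, L_n)$ with the composition $L_0 \to L_n$ of all the generization maps. This sheaf has microsupport on $\{y=0\} = \pi(\Lambda)$ with microstalk $\mathrm{Cone}(L_0 \to L_n)$. The outermost local systems $L_0$ and $L_n$ have unambiguous global meaning as the restrictions of $\cF$ to the two components of the complement of a tubular neighborhood of $\pi(\beta \looparrowright \Lambda)$ (on the positive and negative coorientation sides of $\pi(\Lambda)$), so they are defined globally along $\Lambda$. While the intermediate $L_1, \ldots, L_{n-1}$ may be permuted by monodromy of the cover $\beta \to \Lambda$, the composite morphism $L_0 \to L_n$ is a product of generization maps and hence independent of the ordering. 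These local assignments therefore patch to a global functor $\pi: \Sh_{\beta \looparrowright \Lambda}(M) \to \Sh_\Lambda(M)$, manifestly functorial in $\cF$.

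For the rank formula, fix a point $p \in \Lambda$ with preimages $p_1, \ldots, p_n$ in $\beta \looparrowright \Lambda$ under the cover. The microstalk of $\pi(\cF)$ at $p$ is $\mathrm{Cone}(L_0 \to L_n)$, and by iterated application of the octahedral axiom to the chain $L_0 \to L_1 \to \cdots \to L_n$, its class in the Grothendieck group of perfect $\coeffs$-modules equals
\[
[\mathrm{Cone}(L_0 \to L_n)] = \sum_{i=1}^n [\mathrm{Cone}(L_{i-1} \to L_i)].
\]
Each summand is the K-theoretic class of the microstalk of $\cF$ at $p_i$, so when $\cF$ has locally constant microlocal rank $r$ the total rank of $\pi(\cF)$ at $p$ is $nr = \deg(\beta \to \Lambda) \cdot r$, as required.

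The main delicate step is verifying the gluing carefully when the cover $\beta \to \Lambda$ is topologically non-trivial: although the composite of generization maps is locally well-defined and independent of the labeling of intermediate strands, one must confirm that these local descriptions assemble into a coherent global sheaf (rather than merely a collection of quasi-isomorphic sheaves on overlaps) and that the assembly is functorial. The cleanest articulation, which fits naturally with the $\beta$-filtered perspective developed in subsequent sections, is to present $\Sh_{\beta \looparrowright \Lambda}$ as a category of filtered $\Lambda$-constructible sheaves, the filtration indexed by the strands of $\beta$, and identify $\pi$ with the forgetful functor extracting the underlying ambient $\Lambda$-constructible object.
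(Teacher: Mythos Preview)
The paper does not actually prove this lemma: immediately after the statement it says ``We omit the (easy) proof, as we only use this proposition in the case when $M$ is a surface and the Legendrian $\Lambda$ is a union of circles around the punctures. In this case the result is evident.'' So there is no argument to compare yours against; you have supplied what the authors left implicit.

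Your construction is correct and matches the intended picture (cf.\ Example~\ref{ex:filtered}, where $\pi$ is visibly the forgetful functor discarding the intermediate steps of the filtration). One sharpening of your gluing discussion: the phrase ``independent of the ordering'' undersells the point. The composite $L_0 \to L_n$ is not merely insensitive to relabeling of intermediate strands---it has an intrinsic description as the generization map of $\cF$ itself from the region on the negative-coorientation side of the entire tubular neighborhood to the region on the positive side. That is, $\pi(\cF)$ can be defined in one stroke as the sheaf that agrees with $\cF$ outside a tubular neighborhood of $\pi(\Lambda)$ and whose single generization map across $\pi(\Lambda)$ is the restriction map of $\cF$ across that whole neighborhood. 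This makes global well-definedness and functoriality immediate, without any patching argument, and sidesteps the delicacy you flag in your final paragraph.

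Your rank computation via telescoping cones is fine at the level of K-theory classes, which is all the formula asserts. If one wants the stronger statement that $\pi$ carries microlocal rank-one sheaves (in the strict sense of degree-zero locally free microstalks) to microlocal rank-$n$ sheaves, one additionally observes that the $L_{i-1} \to L_i$ are then honest injections of modules in degree zero, so $\mathrm{Cone}(L_0 \to L_n) \cong L_n/L_0$ sits in degree zero as well.
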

We omit the (easy) proof, 
as we only use this proposition in the case when $M$ is a surface 
and the Legendrian $\Lambda$ is a union of circles around the punctures. In this case the 
result is evident; we just include the above formulation for the sake of clarity.  

We can associate a Legendrian in $J^1(\Lambda)$ to any positive (annular) braid. 
Thus a choice of a positive $n$-strand braid $\beta_i$ at the $i$'th puncture determines
a morphism
\begin{equation}\label{eq:forget} \cM_1(\cup_i (\beta_i \looparrowright \Lambda_i), \sigma) \to \cM_n(\cup_i \Lambda_i, \sigma) \cong Loc_n(\Sigma \smallsetminus \sigma).
\end{equation}
That is, we draw $n$-stranded braids around the points and consider sheaves microsupported
along these braids. 

\begin{definition}\label{def:filtloc}
Let $\Sigma$ be a surface and $\sigma = \{ \sigma_i \}$ a collection of points.  Let 
$\sigma_i \mapsto \beta_i \in Br^+_n$ be an assignment of a positive braid to each
point of $\sigma$; by abuse of notation we also write
$\beta_i$ for $\beta_i \looparrowright \Lambda_i$ where $\Lambda_i$ is an inward-co-oriented
circle around $\sigma_i$.   Writing $\beta = \coprod \beta_i$, we refer to the points of $\cM_1(\Sigma,\beta,\sigma)$ as \textbf{$\beta$-filtered local systems}. 
\end{definition}

For the trivial braid, this recovers exactly the notion of filtered local system:  

\begin{example} \label{ex:filtered}
Let $D^2$ be a disk, and $\bigcirc^n \subset T^\infty (D^2 \smallsetminus 0)$ be the link whose front projection is $n$
concentric circles. Then $Sh_{\bigcirc^n}(D^2, 0)$ is the category of pairs
$(0 = K_0 \to K_1 \to \cdots \to K_n = K;\, m: K \to K)$ where $K$ is a filtered complex
and $\phi$ is an endomorphism preserving the filtration.  The correspondence is that
$K_i$ is the stalk between the $i$th and $i+1$st strands away from $0$, and 
$m$ is the monodromy. 

Fixing the microlocal rank to equal one forces $K$ to be (quasi-isomorphic to) a locally
free $\coeffs$-module of rank $n$, and the filtration to be the same as a full flag. 
Thus, $\cM_1(\bigcirc^n, 0)$ is
the total space of the Grothendieck-Springer resolution:  $\cM_1(\bigcirc^n, 0) \cong \widetilde{GL}_n/GL_n$.  The resolution morphism $\widetilde{GL}_n/GL_n \to GL_n/GL_n$ itself is the map $\cM_1(\bigcirc^n, 0) \to Loc_n(D^2 \smallsetminus 0)$ of \Cref{eq:forget}.
\end{example}

\Cref{ex:filtered} illustrates a general feature of $\beta$-filtered local systems: up to quasi-isomorphism they are sheaves in the usual sense rather than merely complexes of sheaves.  

\begin{proposition}\label{prop:abeliancat}
Let $\Sigma$, $\beta$, and $\sigma$ be as in \Cref{def:filtloc}. Every microlocal rank-one object of $\Sh_\beta(\Sigma,\sigma;\coeffs)$ is isomorphic to an object supported in cohomological degree zero. In particular, no such objects have negative self-extensions, so $\cM_n(\Sigma,\beta,\sigma)$ is exactly the truncation $t_0 \R \cM_n(\Sigma,\beta,\sigma)$.
\end{proposition}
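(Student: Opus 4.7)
The property of being isomorphic to a complex concentrated in cohomological degree zero is detected on stalks, so it suffices to show that for each microlocal rank-one object $\cF \in \Sh_\beta(\Sigma,\sigma;\coeffs)$ and each connected component of $\Sigma \smallsetminus \pi(\beta)$, the stalk of $\cF$ on that region is quasi-isomorphic to a finitely generated projective $\coeffs$-module in degree zero. I would argue this by an induction that starts at the marked points and propagates outward across strands and crossings of $\beta$.

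The base case is the innermost region of each puncture component, namely the one containing $\sigma_i$, where the stalk of $\cF$ vanishes by definition. For the inductive step there are two local models. In a region separated from a previous one by a single strand of $\beta$, the local description of \Cref{ex:A2} gives a distinguished triangle $E \to W \to \mu \to E[1]$, where $E$ is the inner stalk, $W$ is the new stalk, and $\mu$ is the microstalk along the strand. By the microlocal rank-one hypothesis, $\mu \simeq \coeffs[0]$; by induction, $E$ is concentrated in degree zero with projective $H^0(E)$. The long exact sequence in cohomology then forces $H^i(W) = 0$ for $i \neq 0$ and identifies $H^0(W)$ with an extension of $\coeffs$ by $H^0(E)$. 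Since $\coeffs$ is projective over itself this extension splits, so $H^0(W)$ is projective of rank one greater than $H^0(E)$.

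At a crossing, the local description of \Cref{ex:crossing} imposes the crossing condition that the total complex $S \to W \oplus E \to N$ is acyclic; equivalently, $N$ is (up to shift) the cone of $S \to W \oplus E$. Applying the previous paragraph to the two strands incoming from the south, the stalks $S$, $W$, $E$ are each concentrated in degree zero with projective $H^0$, and each of the maps $S \to W$ and $S \to E$ is injective (the kernel would contribute to the microstalk along one of the two strands, which is known to be rank one in degree zero). Hence the diagonal map $S \to W \oplus E$ is injective, its cokernel is a projective $\coeffs$-module, and $N$ is concentrated in degree zero with projective $H^0(N)$. This closes the induction.

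Concluding, $\cF$ is quasi-isomorphic to $H^0(\cF)$, which is a sheaf in the classical sense, so in particular has no negative self-extensions; the final assertion $\cM_n(\Sigma,\beta,\sigma) = t_0 \R \cM_n(\Sigma,\beta,\sigma)$ is then immediate from the definition of the classical truncation as the $1$-rigid locus. The step I expect to require the most care is the crossing case: extracting injectivity of $S \to W \oplus E$ from the rank-one microstalk conditions along the two incoming strands, and then deducing that acyclicity of the total complex really does force $N$ into degree zero, rather than leaving room for it to have cohomology in degree $-1$ that is cancelled by the inductive input from the south.
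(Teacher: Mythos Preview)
Your argument is correct and is exactly the inductive stalk computation the paper has in mind when it omits the proof and defers to \cite[Prop.~5.19]{STZ}; in fact, for a positive braid satellite every region at positive rank borders a lower-rank region across a smooth strand arc, so your single-strand step already closes the induction and the crossing analysis is a (correct) redundancy. The one imprecision is the bare assertion that $\mathrm{coker}(S \to W \oplus E)$ is projective---this does not follow from injectivity and projectivity of $S, W, E$ alone, but it does follow once you use the microstalk condition on the strand between $W$ and $N$ (giving $0 \to W \to N \to \coeffs \to 0$), and in any case projectivity plays no role in the degree-zero conclusion.
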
 

We omit the proof, which is a straightforward generalization of \cite[Prop. 5.19]{STZ}.

\subsection{Positroid strata and the Grassmannian}\label{sec:positroids4life}
The positroid stratification of the Grassmannian is the common refinement of the Schubert stratification and its cyclic shifts, and arises naturally from the perspective of total positivity  \cite{Po}.  
The positroid strata of $\Gr(k,n)$ are indexed by a number of equivalent combinatorial objects, 
the most relevant of which for us will be cyclic rank matrices \cite[Cor. 3.12]{KLS}: in this section we use these to give microlocal descriptions of positroid strata.

\begin{definition}
A \textbf{cyclic rank matrix} of type $(k,n)$ is a $\mathbb{Z} \times \mathbb{Z}$ integer matrix $r$ such that
\begin{description}
\item[C1] $r_{ij} = 0$ for $j <i$
\item[C2] $r_{ij} = k$ for $j \geq i+n-1$
\item[C3] $r_{ij}-r_{(i+1)j} \in \{0,1\}$ and $r_{ij}-r_{i(j-1)} \in \{0,1\}$ for all $i$, $j$
\item[C4] if $r_{(i+1)(j-1)} = r_{(i+1)j} = r_{i(j-1)}$ then $r_{ij} = r_{(i+1)(j-1)}$
\item[C5] $r_{(i+n)(j+n)} = r_{ij}$
`\end{description}
\end{definition}

For each $V \in \Gr(k,n)$ there is an associated cyclic rank matrix $r(V)$ of type $(k,n)$, and the positroid strata will be the level sets of this assignment.  Let $c_1,\dotsc,c_n \in \C^k$ be the columns of any matrix representative of $V$, and for arbitrary $i \in \Z$ define $c_i$ so that $c_i = c_{i+n}$ for all $i$.  Then we set $r(V)_{ij}$ to be the dimension of the span of the columns $c_i \cdots c_j$.  Note that for $j < i$, we have the empty
collection of columns, hence $r(V)_{ij} = 0$, and that for $j > i + n - 1$ we have all the columns,
hence $r(V)_{ij} = k$.  
The conditions \textbf{C1}-\textbf{C5} exactly characterize the matrices that arise from $\Gr(k,n)$ in this fashion \cite{KLS}.  

\begin{definition}
Given a cyclic rank matrix $r$ of type $(k,n)$, the associated \textbf{positroid stratum} is
\[
\Pi_r = \{V \in \Gr(k,n)|r(V) = r\}.
\] 
\end{definition}

In our context the most natural cyclic rank matrices are those such that $r_{ii} \neq 0$ for all $i$, and we assume this from now on.  That is, we assume the columns of any matrix representative of $V \in \Pi_r$ are all nonzero.  No generality is lost in the sense that any positroid stratum $\Pi_r$ not satisfying this condition be embedded into a smaller Grassmannian as a positroid stratum that does.

We record loci where the entries of $r$ jump as a Legendrian $\Lambda_r$ in $T^\infty D^2$ as follows.  The basic idea is to regard $r$ as an actual geometric object in $\R^2$, and build $\La_r$ in such a way that the faces of its front projection correspond to patches of $r$ where its entries are constant. 
\begin{itemize}
\item We first define a Legendrian $\Lambda_r'$ in $T^\infty\R^2$ lying over a neighborhood of the square grid $\R \times \Z \cup \Z \times \R$.  Consider the union of the segments $\{i\} \times (j, j+1)$ with $r_{i(j-1)} < r_{ij}$ and the segments $(i,i+1) \times \{j\}$ with $r_{ij} > r_{(i+1)j}$.  We co-orient the former leftward and the latter downward.  Its closure is a collection of pairwise-transverse immersed co-oriented curves with corners at the points $(i,j) \in \Z^2$ such that
\[
r_{ij} = r_{(i+1)j} = r_{i(j-1)} = r_{(i+1)(j-1)} + 1.
\]
We smooth all such corners and let $\Lambda_r'$ be the Legendrian lift of the resulting collection of smooth immersed curves.
\item Consider the restriction of $\Lambda_r'$ to the infinite strip 
\[
\cS=\{(x,y)| \frac12 < y+x < k + \frac12\}.
\]
By \textbf{C5}, the restriction is invariant under the translation $T_n: (x,y) \mapsto (x+n,y-n)$ of $\cS$, hence gives rise to a Legendrian $\Lambda_r$ in $T^\infty\cA$, where $\cA$ is the annulus $\cS/\langle T_n\rangle$.  Since $\Lambda_r$ does not meet the boundary component whose preimage is the line $y+x = k + \frac12$, we can embed $\cA$ in a disk to regard $\Lambda_r$ as a Legendrian in $T^\infty D^2$. 
\end{itemize}

\begin{example}\label{ex:introGr24}
Let us compute the moduli space of microlocal rank-one sheaves associated to the Legendrian 
whose front projection is pictured on the left in Figure \ref{fig:introGr24}.  
This Legendrian is the satellite 
formed by taking the braid on two strands, twisted five times, and inserting it in
a neighborhood of the cocircle over the North pole of $S^2$; the front
projection of this lands in the complement of the North pole, which we identify with the page.

 A sheaf with microsupport in this Legendrian restricts to a local 
system on each component of the complement of the front projection.  
We are considering sheaves vanishing
at the north pole, so these local systems are only nonzero on the six components which are
bounded in the picture.  Since these are contractible, the local systems are just the data of six vector spaces (for now we work over a ground field $\coeffs$). 
The sheaf is then determined by the data of these vector spaces together with linear maps associated to paths going against the hairs.  If the sheaf has microlocal rank-one, the dimensions increase by one as we move against the direction of the hairs, hence are as indicated in the picture.  Choosing bases for the vector spaces involved, we can can encode the linear maps as the columns of a matrix:

\begin{equation}
\label{eq:enws-mat}
\left[
\begin{array}{rrrrr}
a_1 & b_1 & c_1 & d_1 & e_1 \\
a_2 & b_2 & c_2 & d_2 & e_2
\end{array}
\right]
\end{equation}
The condition that the pictured sheaf has singular support on $\Lambda$ --- as opposed to the 
union of $\Lambda$ with the cocircle fibers over the crossings in the front projection --- translates to the condition that 
any two cyclically adjacent columns of \eqref{eq:enws-mat} are linearly independent (see \cite[Sec. 5]{STZ}). 

Two such matrices correspond to isomorphic sheaves if they are related by a combination of left multiplication by $\GL_2$ and right multiplication by the diagonal subgroup of $\GL_5$.  Thus the moduli space $\cM_1(\Lambda)$ is the quotient of the space of $2 \times 5$ matrices satisfying the crossing conditions by these symmetries.  In other words, $\cM_1(\Lambda)$ is the configuration space of $5$ cyclically ordered points in $\bP^1$, with the condition that cyclic neighbors are distinct.

We can also frame this story by considering sheaves with fixed trivializations of their one-dimensional stalks.  These are still represented by matrices as in \eqref{eq:enws-mat}, but now two matrices are equivalent if and only if they are related by the left $\GL_2$ action.  The moduli space of so-framed microlocal rank-one sheaves is an open subset of the Grassmannian of two-planes in five-space.  The crossing conditions above  define the \emph{big positroid stratum} of $\Gr(2,5)$ \cite{Po}. 
\end{example}

\begin{example}\label{ex:Gr35}
Let $V$ be the point of $\Gr(3,5)$ represented by the matrix
\[
\begin{pmatrix} 0 & 1 & 1 & 1 & 1 \\ 0 & 0 & 1 & 1 & 1 \\ 1 & 0 & 0 & 0 & 0\end{pmatrix}.
\] 
The cyclic rank matrix $r(V)$ is shown below left, with the front projection of $\Lambda'_{r(V)}$ shown in red (without smoothed corners). The identification of top and bottom sides by the action of $T_5$ is indicated by the dashed green line.
At right is the front of the associated (smooth) Legendrian knot on the cylinder, the horizontal dashed line indicating where $\partial D^2$ cuts $\Lambda'_{r(V)}$.
\end{example}

\begin{center}
\includegraphics[scale=0.4]{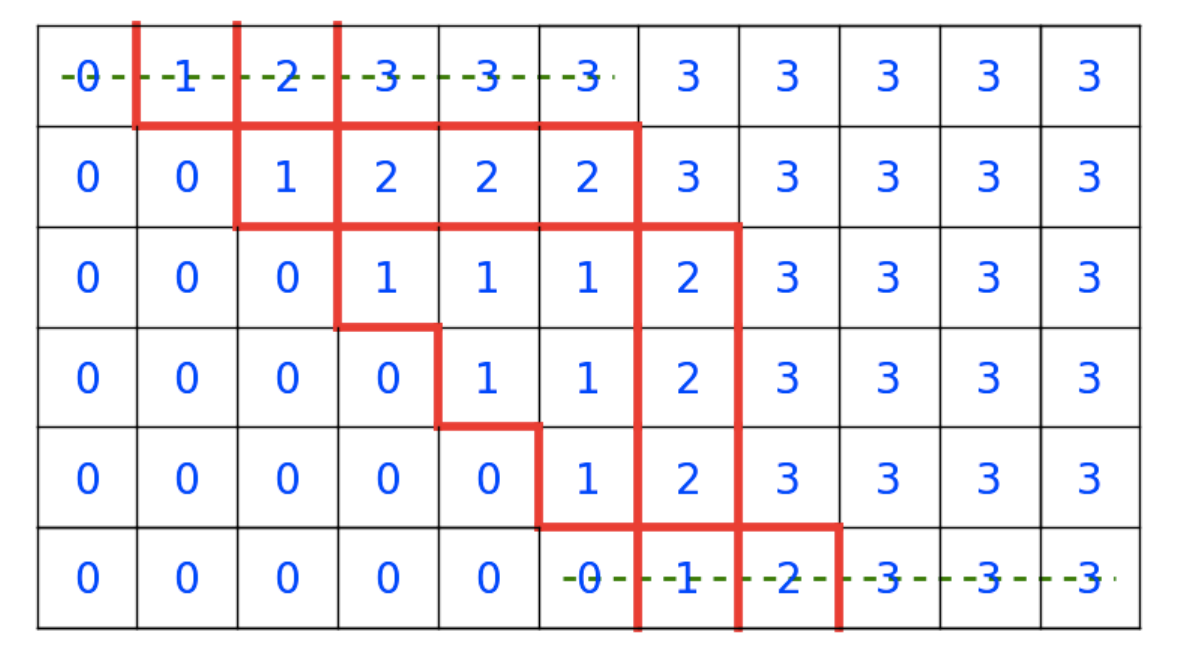}
\qquad 
\raisebox{0.01\height}{\includegraphics[height=1.7in]{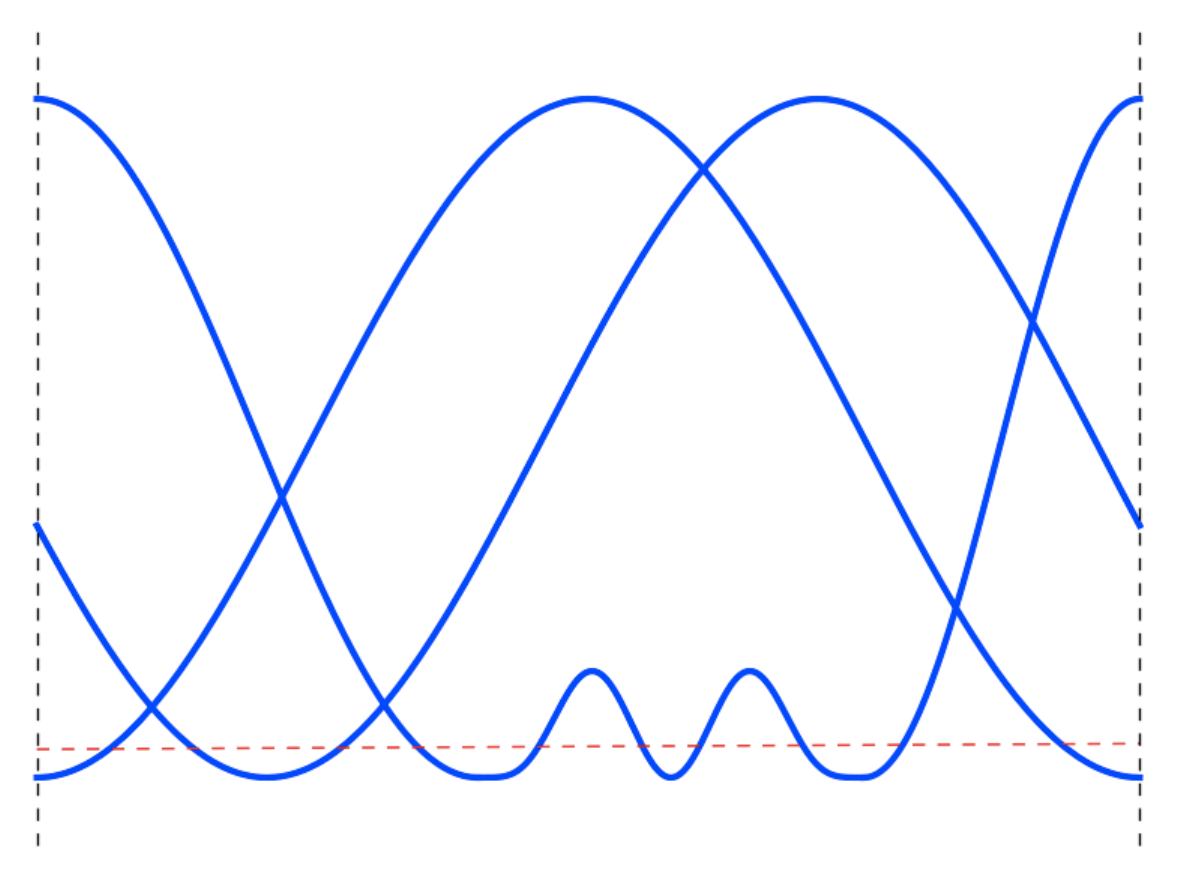}}
\end{center}

Returning to the general case, we fix a set of points $\sigma \subset D^2$, one in each of the $n$ boundary regions of $D^2 \smallsetminus \pi(\Lambda_r)$ for which $\Lambda_r$ is co-oriented into the given region.  In the construction of $\Lambda_r$, these regions come from the ``corners'' of the subdiagonal entries of $r$, which are equal to zero.  The pair $\Lambda_r$, $\sigma$ effectively satisfy the hypotheses of \Cref{prop:abeliancat}, so the objects of $\cC_1(\Lambda,\sigma)$ are ordinary sheaves rather than complexes.  

Let $U$ be an open collar of $\partial D^2$ containing no crossings of $\Lambda_r$, and $V$ the union of the components of $D^2 \smallsetminus \pi(\Lambda_r)$ that do not contain points in $\sigma$.  We let $\cF_\coeffs = i_*\coeffs_{U \cap V} \in \Sh_{\Lambda_r|_U}(U)$; this is a sheaf of microlocal rank one whose stalks are alternately $0$ and $\coeffs$ around the boundary of $D^2$.  We let $\cM_{\:1}^{fr}(\Lambda_r)$ denote the space of sheaves framed by $\cF_\Z$, as in \Cref{def:framed} (since $\sigma$ is fixed throughout the section, we omit it from the notation).  The $\coeffs$-points of $\cM_{\:1}^{fr}(\Lambda_r)$ are objects in $\cC_1(\Lambda,\sigma;\coeffs)$ equipped with an isomorphism between their restriction to $U$ and $\cF_\coeffs$. 

\begin{theorem}\label{thm:positroidasmoduli}
For any cyclic rank matrix $r$ of type $(k,n)$, there is a canonical isomorphism between the positroid stratum $\Pi_r$ and the framed moduli space $\cM^{fr}_{\:1}(\Lambda_r)$.
\end{theorem}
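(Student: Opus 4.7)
The plan is to exhibit a direct bijection between the $\coeffs$-points of $\cM^{fr}_{\:1}(\Lambda_r)$ and those of $\Pi_r$, natural in the ring $\coeffs$, and then argue it extends to an isomorphism of moduli spaces.

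First, I would analyze the local structure of a microlocal rank-one sheaf $\cF \in \cC_1(\Lambda_r,\sigma;\coeffs)$. The connected components of $D^2 \smallsetminus \pi(\Lambda_r)$ are in natural bijection with patches of the $(i,j)$-grid on which $r$ is constant, and by construction $\Lambda_r$ is cooriented in the direction of decreasing $r$. It follows from \Cref{ex:A2} and \Cref{prop:abeliancat} that the stalk of $\cF$ on the face indexed by $r_{ij}$ is a locally free $\coeffs$-module of rank $r_{ij}$, and across each strand the generization map is an injection with rank-one cokernel. The framing, which in particular subsumes the vanishing condition at $\sigma$, forces the stalks on the $n$ rank-one boundary faces to be the trivialized module $\coeffs$, and those on the $n$ rank-zero boundary faces to be $0$.

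Second, I would construct the forward map $\cM^{fr}_{\:1}(\Lambda_r) \to \Pi_r$. Writing $W$ for the stalk of $\cF$ on the unique face of rank $k$, composing chains of generization maps yields, for each $i \in \{1,\dotsc,n\}$, a distinguished vector $v_i \in W$ coming from the trivialization $\coeffs \cong \cF_{f_i}$ on the $i$th rank-one boundary face. Arranging these as the columns of a $k \times n$ matrix and quotienting by the $GL_k$-action on $W$ yields a class in $\Gr(k,n)$. The key assertion is that this class lies in $\Pi_r$: the stalk on any intermediate face injects into $W$ as the span of the corresponding consecutive columns $v_i,\dotsc,v_j$, so $\dim \langle v_i,\dotsc,v_j\rangle = r_{ij}$, which is precisely the cyclic rank matrix condition.

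For the inverse, given a representative matrix with columns $v_1,\dotsc,v_n \in \coeffs^k$, I would define a sheaf on $D^2$ by setting its stalk on each face to be the span inside $\coeffs^k$ of the columns corresponding to the rank-one boundary faces ``visible from'' that face, with the natural inclusions as restriction maps. At each crossing of $\Lambda_r$, the four neighboring stalks are spans of nested subsets of columns, and I would verify that the acyclicity condition of \Cref{ex:crossing} — namely $W \cap E = S$ inside $N = W + E$ — is equivalent to the local rank condition imposed by axiom \textbf{C4}. A $GL_k$-transformation of the matrix yields an isomorphic sheaf preserving the framing, so this descends to a map $\Pi_r \to \cM^{fr}_{\:1}(\Lambda_r)$ inverse to the first. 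Since all formulas are manifestly natural in $\coeffs$, the two constructions assemble into an isomorphism of stacks. The main obstacle, I expect, is the careful matching of crossing conditions against the cyclic rank matrix axioms at every crossing of $\Lambda_r$, especially those arising from the smoothed corners of $\Lambda_r'$ where several crossings can be created in close proximity; a systematic local-to-global analysis comparing the face-and-crossing presentation of $\cC_1(\Lambda_r,\sigma;\coeffs)$ with the axioms \textbf{C1}--\textbf{C5} should conclude the proof.
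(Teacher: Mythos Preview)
Your proposal is correct and follows essentially the same route as the paper: extract column vectors from the framed rank-one boundary stalks mapping into the rank-$k$ stalk, verify that the crossing conditions force the cyclic rank matrix to be $r$, and invert by assigning to each face the span of the appropriate consecutive columns. The paper adds two small points you gloss over: it argues faithfulness of the forward map by inductively using that restriction across a crossing is an equivalence (so the sheaf is determined by its restriction to a neighborhood of the chosen paths), and it notes explicitly that all points have trivial stabilizers, which is needed to conclude $\cM^{fr}_{\:1}(\Lambda_r)$ is a variety rather than merely a stack.
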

\begin{proof}
We first describe an embedding of $\cM^{fr}_{\:1}(\Lambda_r) \hookrightarrow \Gr(k,n)$, then show its image is exactly $\Pi_r$.  

The definition of $\Lambda_r$ (and choice of $\sigma$) is set up to ensure that the connected components of $D^2 \smallsetminus \pi(\Lambda_r)$ correspond to entries of $r$, in such a way that the rank of the stalk of any object of $\cC_1(\Lambda_r,\sigma;\coeffs)$ in a given 
connected component is the corresponding entry of $r$. 

Each of the $n$ rows of $r$ determines a boundary component of  $D^2 \smallsetminus \pi(\Lambda_r)$ which does not meet $\sigma$;
fix points $p_1, \ldots, p_n$ in these components.  Also fix a point $x$ in the middle region where stalks have rank $k$.    

A $\coeffs$-point of $\cM^{fr}_{\:1}(\Lambda_r)$ is a framed sheaf $F$, which has stalks $F_1,\dotsc, F_n$ at $p_1,\dotsc,p_n$, each equipped with a trivialization $F_i \cong \coeffs$, and a stalk $F_x$ at $x$. 
Choose characteristic paths from each $p_i$ to $x$ (that is, paths that only cross strands of $\pi(\Lambda_r)$ going against their co-orientations; the construction will be independent of the choice).  Each defines an inclusion $F_i \into F_x$, the composition of the generization maps along the path (see Figure \ref{fig:Gr35sheaf}).  The crossing conditions (see \Cref{ex:crossing}) guarantee that the images of the $F_i$ together generate $F_x$.  Thus from $F$ we obtain a locally free $\coeffs$-module $F_x$ of rank $k$ with a quotient map from $\oplus F_i \cong \coeffs^n$, which is the data of a $\coeffs$-point of $\Gr(k,n)$.  This is clearly compatible with base change, so we obtain a map $\cM^{fr}_{\:1}(\Lambda_r) \to \Gr(k,n)$.  The claim that this is faithful is equivalent to the claim that restriction from $\cM^{fr}_{\:1}(\Lambda_r)$ to the framed moduli space of a neighborhood of the union of the paths from the $p_i$ to $x$ is faithful.  This follows by inductively applying the fact that in a neighborhood of a crossing, restriction to the upper or lower regions as pictured in \Cref{fig:crossing} is an equivalence.  That is, we can expand the neighborhood of the paths to include a new crossing and meet a new region of $D^2 \smallsetminus \pi(\Lambda_r)$ one at a time, each time yielding an equivalence of sheaf categories, until the whole disk is covered.

\begin{figure}\label{fig:Gr35sheaf}
\centering
\begin{tikzpicture}
\newcommand*{\off}{10}; \newcommand*{\rad}{2.7}; \newcommand*{\ang}{70}; \newcommand*{\fac}{.3};
\node (l) [matrix] at (0,0) {
\draw[dashed] (0,0) circle (\rad);
\coordinate (a) at (90:\rad*.5); \coordinate (b) at (-140:\rad*.53);
\coordinate (c) at (-20:\rad*.35); \coordinate (d) at (80:\rad*.7);
\coordinate (e) at (-40:\rad*.53); \coordinate (f) at (-160:\rad*.35);
\coordinate (g) at (100:\rad*.7);
\draw[asdstyle,righthairs] (-90+\off:\rad) to[out=180-90+\off,in=180-18-\off] (-18-\off:\rad);
\draw[asdstyle,lefthairs] (-90-\off:\rad) to[in=180-162+\off,out=180-90-\off] (-162+\off:\rad);
\draw[asdstyle,righthairs] (a) to[out=180,in=180-162-\off] (-162-\off:\rad);
\draw[asdstyle,lefthairs] (a) to[in=180-18+\off,out=0] (-18+\off:\rad);
\draw[asdstyle,righthairs] (126+\off:\rad) to[out=180+126+\off,in=145] (b) to[out=180+145,in=180+50] (c) to[out=50,in=-20] (d) to[out=180-20,in=180+126-\off] (126-\off:\rad);
\draw[asdstyle,lefthairs] (54-\off:\rad) to[out=180+54-\off,in=35] (e) to[out=180+35,in=180+130] (f) to[out=130,in=-160] (g) to[out=180-160,in=180+54+\off] (54+\off:\rad);
\node (m) at (90:.1*\rad) {$F_x$};
\foreach \n in {1,2,3,4,5} {\node (n\n) at (54+72*\n:\rad*.85) {$F_\n$}; \draw[genmapstyle] (n\n) to (m);}\\};
\end{tikzpicture}
\caption{A depiction of a microlocal rank-one sheaf $F \in \cC(\Lambda_r,\sigma;\coeffs)$, where $\Lambda_r$ is as in \Cref{ex:Gr35}.  As in the proof of \Cref{thm:positroidasmoduli}, $F$ has five rank one stalks at the boundary of the picture, each of which includes into the rank 3 stalk $F_x$ in the middle.  If $F$ is framed, so we have isomorphisms $F_i \cong \coeffs$, then the quotient map $\coeffs^5 \onto F_x$ defines a $\coeffs$-point of the positroid stratum $\Pi_r$.}
\end{figure}

On the other hand, we have defined $\Lambda_r$ exactly so that its crossing conditions imply that the span in $F_x$ of any cyclically adjacent subset of $F_1,\dotsc,F_n$ has the rank specified by the corresponding entry of $r$.  Thus the image of $\cM^{fr}_{\:1}(\Lambda_r)$ is contained in $\Pi_r$. Conversely, given a $\coeffs$-point of $\Pi_r$, we can directly define a sheaf whose sections over a given region are just the column span associated to the relevant entry or entries of $r$ (note that while a $\coeffs$-point of $\Gr(k,n)$ is a quotient map $\coeffs^n \onto E$ onto a locally free $\coeffs$-module, $\coeffs$-points of $\Pi_r$ are described by quotients where $E$ is in fact free).  Finally, it follows from the fact that the images of the $F_i$ generate $F_x$ that all points of $\cM^{fr}_{\:1}(\Lambda_r)$ have trivial stabilizers, hence $\cM^{fr}_{\:1}(\Lambda_r) \cong \Pi_r$. 
\end{proof}

\begin{remark}
Many other objects in algebraic geometry and representation theory can be identified with positroid strata or their closures, see for example the discussion in \cite[Section 6]{KLS}.  One class of examples are the double Bruhat cells $GL_n^{u,v}$ of $GL_n$.  Here $u$ and $v$ are elements of the Weyl group of $GL_n$ and $GL_n^{u,v}$ is the intersection of the double cosets $B_- u B_-$, $B_+ v B_+$ of $u$, $v$ with respect to opposite Borel subgroups.  These can be embedded as positroid strata in $\Gr(n,2n)$; on the level of matrix representatives this is just concatenation with an identity matrix.  Another class of examples come from subvarieties of the full or partial flag varieties of $GL_n$ that map isomorphically onto their images in $\Gr(k,n)$ under the natural projection.  These include the moduli spaces of triples of flags in generic position considered in \cite{FG1}, which form building blocks in associating cluster charts on moduli spaces of local systems to triangulations of surfaces (see \Cref{con:triangle}).
\end{remark}

\begin{remark}\label{rmk:GM}
The construction of $\Lambda_r \subset T^\infty D^2$ naturally produces a Legendrian $\beta_r \subset T^\infty \R^2$ whose intersection with $T^\infty D^2$ is $\Lambda_r$. The front projection of $\beta_r$ is obtained from that of $\Lambda_r$ by adding caps around the outside of $D^2$ as pictured in \Cref{fig:introGr24} or \Cref{ex:Gr35}. Moreover, $\beta_r$ is a Legendrian braid satellite of a circle around $\infty$, as considered in \Cref{sec:microab}. The unframed moduli spaces $\cM_1(\Lambda_r)$ and $\cM_1(\beta_r)$ are isomorphic, and this space of $\beta_r$-filtered local systems is a configuration space of points in $\P^k$ satisfying open conditions.  The projection $\cM_{\:1}^{fr}(\Lambda_r) \to \cM_1(\Lambda_r)$ is a torus quotient, and this relationship is a version of Gelfand-Macpherson duality subject to the conditions imposed by $r$ \cite{GM}.
\end{remark}

\subsection{Wild character varieties}  \label{sec:wild}
In its simplest form, the Riemann-Hilbert correspondence asserts an equivalence of categories
between integrable meromorphic connections on a complex analytic space, with regular singularities
along a normal crossings boundary divisor, and locally constant sheaves in the complement of the divisor \cite{Del-RH}.  In particular, the parameter space of such regular connections, considered up to gauge equivalence, can be identified with the  space parameterizing representations of the fundamental group of the complement, up to isomorphism.  

The moduli space of connections is called the {\em de Rham} moduli space, and the moduli
space of locally constant sheaves is called the {\em Betti} moduli space.  The Riemann-Hilbert
correspondence asserts that these have the same points; in fact they are complex-analytically 
isomorphic, but have naturally different algebraic structures -- passing from connections to 
their monodromy involves an exponential.  We restrict attention to the case where the space on which we study connections is a smooth Riemann surface.  

The notion of regular singularities is essential in the above equivalence.  One formulation
is that the connection matrix can, analytically locally, be expressed with poles of order 
at most one.  Equivalently, the local solutions exhibit polynomial growth as one approaches
the singular point.  A consequence of this is that the classification of such 
connections up to analytic local gauge equivalence is the same as the classification up to 
formal local gauge equivalence; the local form of the Riemann-Hilbert correspondence is then
just the statement that both of these are characterized by the conjugacy class of the 
exponential of the singular term of the connection.  

To classify connections with possibly irregular singularities, one records {\em Stokes data}, 
i.e., information about the growth rates of solutions \cite{Mal, DMR, 
BV-formal, BV2, BV3}.  This is often formulated in the following
way: given a meromorphic connection on a disk $D^2$, analytic away from zero, the
space of solutions forms a locally constant sheaf $Sol$ on $D^2 \smallsetminus 0$, hence equivalently on
the real oriented blowup $\pi: \widetilde{D^2} \to D^2$ at $0$.  
Let $\cI$ be the totally ordered set of all possible growth
rates of the absolute value of the solution to a linear meromorphic ODE, modulo polynomial
growth rates -- we discuss what $\cI$ is more explicitly later.   Then the sheaf $Sol|_{\pi^{-1}(0)}$ 
carries a stalkwise filtration by $\cI$, varying continuously in an appropriate sense.  This
filtration is termed the Stokes filtration. 

The Riemann-Hilbert theorem in this possibly irregular case 
implies in particular the following three assertions: first, that connections
up to analytic local gauge equivalence are classified by their solution sheaves equipped
with Stokes filtrations; second, that connections up to formal local gauge equivalence 
are classified by their solution sheaves plus the associated graded of the Stokes filtration;
and finally, that if a given associated graded arises from some connection, then there exist
connections giving rise to any filtration with this associated graded
\cite{Mal}.

The relation to our setting is obtained by projecting the $\cI$ filtration to an $\R$ filtration, 
and then ``turning it sideways'' via the observation than a sheaf of $\R$-filtered objects on 
$X$ is the same as a sheaf on $X \times \R$ with microsupport confined to covectors
negative in the $\R$ direction.  Recording the jumping locus of the filtration by 
passing to the associated graded is just the same as recording the microsupport of 
the sheaf.  

Let us be more precise about how to produce the sheaf on $S^1 \times \R$.  
To this end we recall the 
formal classification of singularities of meromorphic ODE: any vector bundle 
on a disk equipped with a meromorphic connection $\nabla$, analytic away from zero, 
is, over the universal cover of the disk, {\em formally} gauge 
equivalent to some $\bigoplus (\alpha \otimes \nabla_\alpha)$ 
where each $\alpha$ is an irregular connection {\em of rank one} and $\nabla_\alpha$ is a
regular connection.  Note that the asymptotics of the holomorphic local solutions
are controlled by the asymptotics of the formal local solutions.  (The ``main asymptotic
existence theorem'' asserts a converse; that one can lift formal solutions to holomorphic
solutions with similar asymptotics.  It is a key step in the proof of the Riemann-Hilbert
correspondence, but in the black-box presentation we are giving it can be viewed as 
a consequence.)

Consider the 
rank one equation 
$$\frac{df}{d z} = \alpha \cdot f(z)\qquad \qquad \qquad \alpha \in \C((z^{1/\infty}))$$ 
Evidently the solution is $f = e^{\int \alpha dz}$. 
The regular part of the connection does not affect the growth rate of solutions
{\em modulo polynomial
growth rates}, i.e., the growth rate is determined by the class of $\alpha$ in 
$\C((z^{1/\infty})) / z^{-1} \C[[z^{1/\infty}]]$.  We will thus take $\alpha$ to have
no terms of degree greater than $-1$. 

We return to our description of the sideways Stokes sheaf.  Fix again some connection $\nabla$,
and after some gauge transformation defined over $ \C((z^{1/\infty}))$, expand it 
as $\nabla = \bigoplus (\alpha \otimes \nabla_\alpha)$.  Fix some $\epsilon \ll 1$, 
and plot, as a function of $\theta$, the (multivalued-)functions 
$n_{\alpha, r}(\theta) := \log |f(\epsilon e^{i \theta})| = Re ( (\int \alpha dz)_{z = r e^{i \theta}})$, for every $\alpha$ which appears 
in the above decomposition. 

Consider the sheaf $Sol|_{\pi^{-1}(0)}$, and pull 
it back to $\R \times S^1 $ under the projection of the $\R$ factor.  Fix now some 
$\epsilon \ll 1$.  
Note that a stalk of this sheaf is in fact a function on the circle; form the 
subsheaf $\mathbb{S}^{\epsilon}$ whose stalk at $(N, \theta)$ consists of solutions which grow
at most polynomially faster than any formal solution, the logarithm of whose
evaluation at $\epsilon e^{i\theta}$ is at most $N$.  That is: 
$$\mathbb{S}^{\epsilon}_{N, \theta} := \{f \in Sol_{N, \theta} \, | \, 
N \le n_{\alpha, \epsilon}(\theta) \implies 
 \log |f(re^{i \theta})| \le   n_{\alpha, r}(\theta) + O_{r \to 0}(1) \}$$

By construction, the sheaf $\mathbb{S}^\epsilon$ has microsupport at infinity 
equal to the Legendrian link whose front projection is the union of the graphs
of the $n_{\alpha, \epsilon}$, co-oriented towards $-\infty$ in $\R \times S^1$. 
We call this link the {\em Stokes Legendrian} of the connection, and term its
front projection the {\em Stokes diagram}.  Note that the Stokes diagram and
Stokes Legendrian depend only on the formal type.  

\begin{remark}
The fact that this filtration should be viewed as describing a Legendrian 
is mentioned in \cite{KKP}, its front projection having been 
drawn by Stokes himself \cite{Sto} (we thank Philip Boalch for bringing this last reference to our attention). 
\end{remark}

\vspace{2mm}
We can now state more precisely the irregular Riemann-Hilbert
correspondence.  Let $\Sigma$ be a surface, and $p_1, \ldots, p_k$ be points on $\Sigma$.  
Fix a formal type of irregular singularity $\tau_i$ at each $p_i$, i.e., choose some connection on 
a disk near each $\tau_i$, meromorphic on the disk and holomorphic away
from $p_i$, defined up to formal gauge equivalence, and up to changing the regular part of 
the connection.  That is, for the moment we take
our notion of formal type to mean that only the $\alpha$ are specified, and the $\nabla_\alpha$
are left to vary.  
Let $C_{dR}(\Sigma, \{p_i\}, \{\tau_i\})$
be the category of connections with the prescribed formal types.  

Let $\Lambda_i$ be the Stokes Legendrian of the singularity $\tau_i$.  
Draw the knot $\Lambda_i$ on $\Sigma$ by first passing to the real blowup 
$Bl_{p_i} \Sigma$, and then gluing the $\R \times S_1$ above to the inside of the 
boundary circle, with $\infty$ in the $\R$ factor facing `into' the surface.  One now has
a punctured surface; the puncture can be filled in and re-labelled $p_i$.  

The procedure we described locally before can now be performed globally over the surface. 
That is, if we write $C_B(\Sigma, \bigcup \Lambda_i, \bigcup p_i)$ for the subcategory of
$Sh_{\bigcup \Lambda_i}(\Sigma)$ in which the stalk of the sheaf vanishes at all $p_i$, 
then forming the global sideways Stokes sheaf of solutions defines a map 
$$C_{dR}(\Sigma, \{p_i\}, \{\tau_i\}) \to C_B(\Sigma, \bigcup \Lambda_i, \bigcup p_i)$$ 
The irregular Riemann-Hilbert correspondence implies this map is an equivalence. 

The $\nabla_\alpha$ on the de Rham side integrate to the 
microlocal monodromies on the Betti side.  In particular, when all the $\nabla_\alpha$ have
dimension one, the moduli space of objects in the above category is exactly of the sort we have been considering in this section. 

\begin{example}
The ODEs $f'' = z^n f$ generalize the Airy equation ($n=1$), and correspond to the $\cD$-module
$d - A,$ with $A = \;${\tiny $\begin{pmatrix}0&1\\z^n & 0\end{pmatrix}$}.  This gives an $SL_2$-flat connection,
and we will determine the formal type of the singularity by investigating the solutions ${f\choose f'}$ at $z=\infty$.
First put $x=z^{-1}$ to move the irregular singularity to the origin,
then define the differential operator $\Theta = z\frac{d}{dz} = -x\frac{d}{dx},$ after which the
equation becomes $Lf = 0,$ with $L = \Theta^2 + \Theta - x^{-(n+2)}.$  Newton's method instructs us how to
look at the most singular terms and use gauge transformations to reduce the order of the singularity so as
to develop the power-series parts of asymptotic solutions.  In this case, we find
$f_\pm = \exp(\mp\frac{2}{n+2}x^{-\frac{n+2}{2}})x^{\frac{n}{4}}\sum_m a_m x^{\frac{m}{2}}$.
Wasow's ``Main Asymptotic Existence Theorem'' \cite[Section 14]{W}
states that these represent the singularity types of actual solutions.  Which of the two $f_\pm(r\e^{i\theta})$ is most singular
as $r\to 0$ changes at $n+2$ values of $\theta,$ so we can read off the Stokes
data as the $(2,n+2)$ braid.  Compare with  Figure \ref{fig:introGr24}.
\end{example}

\section{Alternating Legendrians} 
\label{sec:conjlag}

In this section we construct exact Lagrangian fillings of Legendrians whose front projections have crossings of alternating orientations. The data of such a Legendrian can be encoded by a bicolored graph on the surface; 
in the terminology of \cite{Po}, the front projection is an {\em alternating strand diagram}. The smooth surface which underlies the exact Lagrangian filling has the same homotopy type as the
graph, and can be topologically identified with what has elsewhere been called
the {\em conjugate surface}  \cite{GK}. 
After defining its Lagrangian embedding we consider Lagrangian branes supported on the filling and the sheaf quantizations thereof, which we refer to as alternating sheaves.

\subsection{Alternating colorings}\label{sec:altcolor}

\begin{definition}
\label{def:ASD}
Let $\Sigma$ be a surface, and let $\Lambda \subset T^\infty \Sigma$ be a Legendrian in its cocircle
bundle
whose front projection $\pi(\Lambda)$ 
has only transverse intersections as singularities. 
An \textbf{alternating coloring} for $\Lambda$ is the data of, for each region in
the complement of the front projection, a label {\em black, white, or null},
subject to the following conditions.
\begin{itemize}
\item The boundary of a black region is co-oriented inward.
\item The boundary of a white region is co-oriented outward.
\item The boundary of a null region has co-orientations that alternate
between inward and outward at each crossing.
\item No black region shares a 1-d border with a white region, and
no null region shares a 1-d border with another null region. 
\end{itemize}
An \textbf{alternating Legendrian} $\Lambda$ is a Legendrian equipped with an alternating coloring.
\end{definition}

We term such colorings alternating because of the following characterization.

\begin{proposition}
A link with an alternating coloring has the property that, following along
any strand, 
successive crossing strands in the front projection have alternating co-orientations. 
For a one-component link, 
this is sufficient to guarantee the existence of an alternating coloring. 
\end{proposition}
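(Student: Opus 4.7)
For the first claim, the plan is to fix a strand $s$, two consecutive crossings $c$ and $c'$ along it, and the arc $a$ of $s$ connecting them. Write $L$ and $R$ for the regions on the left and right of $a$, and $L_-, R_-$ for the two further regions on the far sides of $c$. The co-orientation of $s$ along $a$ points into one of $L,R$ and out of the other---say into $L$---so Definition \ref{def:ASD} immediately forces $L \in \{B,N\}$, $R \in \{W,N\}$, and rules out $(L,R) \in \{(B,W),(N,N)\}$; thus $(L,R) \in \{(B,N),(N,W)\}$. A short case analysis at $c$ then shows that the co-orientation of the transverse strand $b$ at $c$ forces all four quadrant colors: when $b$'s co-orientation points ``forward'' along $s$, the two edges of $b$ at $c$ together with the two edges of $s$ at $c$ force $(L_-,R_-)=(N,W)$ and $(L,R)=(B,N)$; when $b$ points backward, the opposite pattern is forced. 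Applying the same analysis at $c'$ with the arc $a$ and matching the resulting $(L,R)$ shows that the co-orientations of $b$ at $c$ and of $b'$ at $c'$ must be opposite, which is the claimed alternation.

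For the converse in the one-component case, the plan is to read this analysis backwards and build the coloring from the co-orientation data. Given a one-component $s$ with alternating co-orientations, I label the arcs of $s$ in alternation by $(B,N)$ and $(N,W)$, specifying proposed colors for the left and right regions of each arc. The total number of arcs of $s$ equals $2k$ where $k$ is the number of self-crossings, so the labeling closes up after a full traversal of $s$. The substantive step is to verify that when a region $R$ is adjacent to several arcs, all of them assign $R$ the same color. To this end I will walk around $\partial R$: across each arc of $\partial R$ the color assigned to $R$ is constant by construction, and at each corner (a self-crossing on $\partial R$) the crossing-level case analysis from the first part, now run in reverse and using the alternation hypothesis, shows that the color of $R$ at that corner matches the color declared by the labels of the two incident arcs. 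Hence the color of $R$ is globally unambiguous, and the three local conditions of Definition \ref{def:ASD} then hold arc by arc and corner by corner by construction.

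The hypothesis that $\Lambda$ has a single component enters precisely in this consistency step: alternation along $s$ provides the one global relation that synchronizes labels on all arcs of $\partial R$ simultaneously, whereas for a multi-component Legendrian one would need further compatibility between strands. I expect the main bookkeeping obstacle to be the corner-matching step, where one has to align the quadrant position of $R$ at a crossing with the side of each of the two incident arcs on which $R$ sits, and check that the two independent declarations of the color of $R$---from the two arc labels on one hand, and from the co-orientations of the two strands at the crossing on the other---coincide precisely when the co-orientations alternate along $s$.
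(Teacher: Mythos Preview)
The paper states this proposition without proof, treating it as a routine justification of the terminology. Your argument supplies that proof and is essentially correct.

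For the forward direction your local analysis is right: the coloring axioms force $(L,R) \in \{(B,N),(N,W)\}$ along the arc $a$, and which case occurs is determined by whether the transverse strand's co-orientation at each endpoint points toward or away from $a$. Since $a$ lies on the forward side of $b$ at $c$ but on the backward side of $b'$ at $c'$, matching $(L,R)$ forces opposite forward/backward types at the two crossings---this is the alternation.

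For the converse your strategy is sound and the corner-matching step you flag does work: at a self-crossing the co-orientations of the two passages fix a unique $B/N/N/W$ pattern on the four local quadrants, and a direct case check (four co-orientation configurations) confirms that the arc labels forced on each passage by the alternation hypothesis both agree with that pattern. One point to make explicit: the alternating arc labeling is not a free binary choice---its parity is pinned down by the co-orientation data at any one crossing, and the evenness of $2k$ then ensures this forced labeling closes up consistently around $s$.

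There is one gap: walking around $\partial R$ only establishes consistency along each connected component of the boundary. On $S^2$ or $\R^2$ this is harmless, since every complementary region of a connected immersed curve is a disk, and this seems to be the setting the paper has in mind. On a higher-genus surface a region may have several boundary components, and neither your argument nor the paper addresses what happens then.
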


We warn the reader that the condition on alternating co-orientations is not naively the same as the condition on over- and undercrossings which defines the notion of an alternating (topological) link in $\R^3$. Indeed, as our links live in a nontrivial circle bundle their crossings lack a canonical notion of over- and undercrossing. Moreover, an alternating coloring is in principle extra structure rather than merely a property. 

\begin{example}
Consider the link composed of two concentric circles in the plane, with the inner one
co-oriented outward and the outer one co-oriented inward.  There are three connected
components of the complement of the front projection.  This admits two 
alternating colorings: proceeding from inside to outside, the three components can
be labelled (white, null, white) or (null, black, null). 
\end{example}

However, this non-uniqueness can be excluded by requiring sufficient crossings.

\begin{proposition}
If every region of the complement of the front projection
abuts a crossing, there is at most one alternating coloring, and the fourth
condition above follows from the first three.  
\end{proposition}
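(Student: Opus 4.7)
The plan is to reduce the global statement to a purely local analysis at each crossing, and then use the hypothesis that every region abuts a crossing to transport the local conclusion globally.

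First I would record the following observation at a crossing $p$. The two transversely intersecting strands at $p$, together with their co-orientations, partition the four adjacent quadrants into three types: exactly one quadrant has both of its bordering arcs co-oriented inward (call it the \emph{both-in} or BI quadrant), one has both co-oriented outward (the BO quadrant), and the remaining two have one arc in and one out (the \emph{mixed} or M quadrants). A direct inspection shows that BI and BO are diagonally opposite at $p$, and so are the two M quadrants.

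Second I would argue that conditions (1)--(3) force each region's color according to the type of quadrant it occupies at any crossing on its boundary. A region at a BI quadrant cannot be white (since condition (2) would require its two bordering arcs at $p$ to be co-oriented outward) and cannot be null (since condition (3) would require those two arcs to alternate between inward and outward at $p$), so it must be black. Symmetrically, a region at a BO quadrant must be white, and a region at an M quadrant must be null. Since by hypothesis every region abuts at least one crossing, its color is determined by the local type there; this immediately yields uniqueness of the alternating coloring.

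Third I would deduce condition (4) by contradiction. Suppose a black region $A$ and a white region $B$ share a one-dimensional border; let $\alpha$ be an arc of the front projection in that common border, and let $p$ be a crossing endpoint of $\alpha$. At $p$ the regions $A$ and $B$ are \emph{adjacent} quadrants, namely the two quadrants meeting along $\alpha$. But the forcing step shows that $A$ occupies the BI quadrant at $p$ and $B$ the BO quadrant, and these are diagonally opposite rather than adjacent -- a contradiction. The null-null case is handled identically, using the diagonal opposition of the two M quadrants.

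The main obstacle in step three is ensuring that the shared arc $\alpha$ actually has an endpoint at a crossing, since only then can the local diagonality argument be invoked. The delicate case is when $\alpha$ is a crossing-free closed-loop component of the front projection; the hypothesis that every region abuts a crossing still permits this provided each of $A$ and $B$ has additional boundary components carrying crossings. I would handle this subcase with a separate argument by considering the diagonally opposite regions at such other crossings: at each crossing on $A$'s other boundary components the opposite quadrant is forced to be white, and tracking the resulting network of forced colors across the closed loop $\alpha$ should yield a global consistency obstruction. Making this closed-loop analysis precise is the step I expect to require the most care.
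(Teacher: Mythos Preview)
Your local analysis at crossings is exactly the paper's approach; its one-line proof simply asserts that near any crossing the coloring is forced and condition (4) holds there. For the uniqueness claim this is complete and your argument is fine.

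You are right that the closed-loop case is the crux for deducing (4), but your proposed resolution cannot succeed: the ``global consistency obstruction'' you hope to find does not exist, because the second claim of the proposition is false as literally stated. On $S^2$, let $\gamma$ be a smooth embedded circle co-oriented inward; inside $\gamma$ place two small outward-co-oriented circles $c_1,c_2$ meeting transversely in two points; outside $\gamma$ place two small inward-co-oriented circles $d_1,d_2$ meeting transversely in two points. Every region abuts a crossing, and the coloring forced by (1)--(3) makes the region $A$ between $c_1\cup c_2$ and $\gamma$ black while the region $B$ between $\gamma$ and $d_1\cup d_2$ is white --- yet $A$ and $B$ share the entire loop $\gamma$. (Taking one inward- and one outward-co-oriented circle in each pair produces an analogous null--null violation.) The paper's terse proof simply does not address this case. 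The statement becomes correct, and your step 3 goes through without the problematic subcase, under the mild additional hypothesis that every \emph{component} of the front projection carries a crossing; this holds in all of the paper's intended examples, e.g.\ alternating strand diagrams of bicolored graphs.
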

\begin{proof} In the neighborhood of any crossing, there is at most one alternating
coloring, which moreover verifies the fourth condition. 
\end{proof}

\begin{remark}
Even if one is only ultimately interested in diagrams satisfying the condition of the proposition, 
the extra flexibility in our definition of alternating coloring is still needed for it be a local notion on $\Sigma$. 
\end{remark}

We assume from now on that $\Sigma$ is orientable; as such we can identify co-orientations with 
orientations, and do so by orienting the Legendrian
such that the ``hairs'' indicating its co-orientation always point {\bf to the left} when traveling in
the direction of the orientation. 
In terms of orientations rather than co-orientations, 
the Legendrian travels counterclockwise around a black region, and clockwise around a white one. 

\begin{figure}
\begin{tikzpicture}
\newcommand*{\boff}{10}; \newcommand*{\aoff}{35}; \newcommand*{\rad}{3};
\node (r) [matrix] at (2.7*\rad,0) {
\coordinate (b0) at (90-\boff:\rad); \coordinate (b1) at (90+\boff:\rad);
\coordinate (b2) at (90+72-\boff:\rad); \coordinate (b3) at (90+72+\boff:\rad);
\coordinate (b4) at (90+2*72-\boff:\rad); \coordinate (b5) at (90+2*72+\boff:\rad);
\coordinate (b6) at (90+3*72-\boff:\rad); \coordinate (b7) at (90+3*72+\boff:\rad);
\coordinate (b8) at (90+4*72-\boff:\rad); \coordinate (b9) at (90+4*72+\boff:\rad);
\coordinate (c0) at (60:.57*\rad); \coordinate (c1) at (120:.57*\rad);
\coordinate (c2) at (18:.8*\rad); \coordinate (c3) at (180-18:.8*\rad);
\coordinate (c4) at (-35:.6*\rad); \coordinate (c5) at (215:.6*\rad);
\coordinate (c6) at (-70:.6*\rad); \coordinate (c7) at (180+70:.6*\rad);
\coordinate (c8) at (90:0*\rad); \coordinate (w0) at (90:.9*\rad);
\coordinate (w1) at (180-18:.95*\rad); \coordinate (w2) at (18:.95*\rad);
\coordinate (w3) at (-126:.8*\rad); \coordinate (w4) at (180+126:.8*\rad);
\coordinate (l0) at (-90:.5*\rad); \coordinate (l1) at (180-18:.55*\rad);
\coordinate (l2) at (18:.55*\rad);
\draw[graphstyle] (w0) -- (l1) -- (w3) -- (l0) -- (w4) -- (l2) -- (w0);
\draw[graphstyle] (w1) -- (l1); \draw[graphstyle] (w2) -- (l2); \draw[graphstyle] (w0) -- (l0);
\foreach \c in {w0,w1,w2,w3,w4,l0,l1,l2} {\fill (\c) circle (\bvertrad);}
\foreach \c in {w0,w1,w2,w3,w4} {\fill[white] (\c) circle (\wvertrad);}
\draw[asdstyle,righthairs] (b0) to[out=110,in=70] (b1) to[out=180+70,in=80] (c1) to[out=180+80,in=60] (c5) to[out=180+60,in=60] (b4) to[out=180+60,in=230] (b5) to[in=-120,out=180-130] (c7) to[in=-105,out=180-120] (c8) to[in=180+20,out=180-105] (c0) to[in=-225,out=20] (c2) to[in=-170,out=180-225] (b8) to[in=10,out=180-170] (b9) to[in=180-100,out=-170] (c2) to[in=180-125,out=-100] (c4) to[in=180+200,out=-125] (c6) to[in=180-200,out=180+20] (c7) to[in=180+125,out=-200] (c5) to[in=180+100,out=125] (c3)  to[in=180+170,out=100] (b2) to[out=180-10,in=170] (b3) to[out=180+170,in=225] (c3) to[out=180+225,in=180-20] (c1) to[out=180+180-20,in=105] (c8) to[out=180+105,in=120] (c6) to[out=180+120,in=130] (b6) to[in=-60,out=180-230] (b7) to[in=-60,out=180-60] (c4) to[in=-80,out=180-60] (c0) to[in=-70,out=180-80] (b0);\\};
\end{tikzpicture}
\caption{The front projection $\pi(\Lambda)$ of an alternating Legendrian and the associated bipartite graph $\Gamma$.  Given $\pi(\Lambda)$, we recover $\Gamma$ by placing a black/white vertex in each region whose boundary is co-oriented inward/outward, then connecting these by edges passing through crossings.  Given $\Gamma$, we recover $\pi(\Lambda)$ by drawing paths going between midpoints of edges of $\Gamma$, co-orienting them away from white vertices and towards black vertices.}
\label{fig:altexample}
\end{figure}
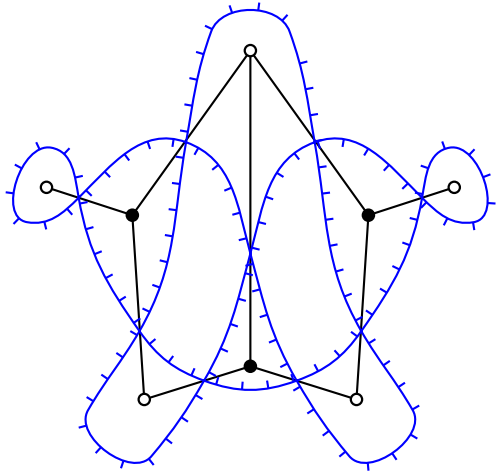

The front projections of alternating Legendrians have been considered elsewhere in the context of bipartite graphs, where they are referred to as {\em alternating strand diagrams} and their components as {\em zig-zag paths} \cite{Po,GK}.  A bicolored graph simply means one whose vertices are labeled white and black; if edges only connect vertices of distinct colors it is bipartite.   

\begin{proposition}
Let $\Gamma$ be a bicolored graph.  Then there is a unique Legendrian lift $\Lambda$ 
of the alternating strand diagram of $\Gamma$ such that the vertex coloring of $\Gamma$ 
gives the labels for an alternating coloring of $\Lambda$.  Every alternating Legendrian admits such a presentation. 
\end{proposition}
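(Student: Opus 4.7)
The plan is to exhibit $\Lambda$ explicitly by specifying a co-orientation on each strand of $\pi(\Lambda)$ directly from the data of $\Gamma$, verify that the four conditions of \Cref{def:ASD} hold with the region coloring induced by the vertex coloring of $\Gamma$, and then observe that those conditions leave no freedom in the choice of co-orientation. First I would color each connected component of $\Sigma\smallsetminus\pi(\Lambda)$ by the color of the (unique) vertex of $\Gamma$ it contains, or call it null if it contains none; this is well-defined because by construction of the alternating strand diagram each region contains at most one vertex and distinct vertices lie in distinct regions.

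Next I would orient each strand of $\pi(\Lambda)$ by demanding that, on the boundary of the polygonal region $R_v$ around a vertex $v$, it traverses $\partial R_v$ counterclockwise when $v$ is black and clockwise when $v$ is white; I would then take the co-orientation of $\Lambda$ to be the $90^\circ$-counterclockwise rotation of this orientation (the ``hairs on the left'' convention of the text). The main local point is consistency at each crossing: such a crossing lies on an edge $e$ of $\Gamma$ connecting a black vertex $b$ to a white vertex $w$, and each of the two strands meeting at the crossing consists, on one side of it, of an arc of $\partial R_b$ and, on the other side, of an arc of $\partial R_w$. Working in local coordinates centered at the crossing with $e$ along the $x$-axis, a direct computation shows that the tangent to $\partial R_b$ traversed counterclockwise at the crossing agrees with the tangent to $\partial R_w$ traversed clockwise at the same point, so each strand acquires a globally well-defined smooth orientation. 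Conditions 1 and 2 of \Cref{def:ASD} are then immediate from the convention (the left side of a counterclockwise traversal of $\partial R_b$ is $R_b$, and the left side of a clockwise traversal of $\partial R_w$ is the exterior of $R_w$). For condition 4, I would check locally that the region opposite $R_v$ across any arc of $\partial R_v$ is a face region of $\Gamma$, hence null, and that distinct face regions of $\Gamma$ meet only at crossings, which are $0$-dimensional. Condition 3 then follows, since at a crossing on the boundary of a null region $N$ the two strand arcs forming a corner of $\partial N$ separate $N$ from some $R_b$ and from some $R_w$, producing opposite co-orientations relative to $N$ by conditions 1 and 2.

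For uniqueness, any Legendrian lift realizing the given region coloring as an alternating coloring is constrained by conditions 1 and 2 to co-orient each strand segment toward its black side (equivalently away from its white side). Since every strand segment between consecutive crossings is an arc of $\partial R_v$ for some vertex $v$ (the strand being a zig-zag path along which the visited polygon alternates at each crossing), the co-orientation is uniquely forced on all of $\pi(\Lambda)$. The main obstacle is the tangent-consistency computation at crossings; once that local check is in hand the remaining verifications of \Cref{def:ASD} and of uniqueness are routine.
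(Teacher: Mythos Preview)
Your argument is correct and is essentially the natural verification one would give; the paper itself states this proposition without proof, treating it as immediate from the definition of the alternating strand diagram and \Cref{def:ASD}. One small imprecision worth noting: for a bicolored graph that is not bipartite, an edge between like-colored vertices carries no crossing, so a single strand arc between consecutive crossings may border a region containing several vertices of the same color rather than a single $R_v$; your orientation prescription is still consistent in this case (all such vertices have the same color, so ``clockwise/counterclockwise around the region'' is unambiguous), but the phrase ``an arc of $\partial R_v$ for some vertex $v$'' should be read accordingly.
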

\begin{proof}
The alternating strand diagram of an embedded bicolored graph $\Gamma \subset \Sigma$ is determined up to planar isotopy by the following conditions: it lies in an open set that retracts onto $\Gamma$, its crossings are in bijection with edges of the graph meeting a vertex of each 
color, with one crossing lying on each edge, and these crossings are the only points where it
meets $\Gamma$.

Conversely, from an alternating Legendrian we can produce a bicolored graph that gives rise to it in the above fashion. This is simplest when each white and black region
is contractible; in this case we simply place appropriately colored vertices in the white and black regions and connect them with edges across crossings.  More generally, we further attach to each vertex
a configuration of embedded self-loops onto which its black/white region 
retracts. 
\end{proof}


\subsection{The conjugate Lagrangian} 
\label{subsec:lfasd}
From an alternating coloring of $\Lambda \subset T^\infty \Sigma$, we now construct 
an exact Lagrangian filling. 
We begin by describing the filling-to-be as an abstract topological surface, absent the embedding
into the cotangent bundle.  The desired surface coincides with that associated to a 
bipartite graph in \cite{FHKV,GK}, where in the latter it is called the {\em conjugate surface}. 

Let $\widehat{\Sigma}$ denote the real blow up of $\Sigma$ at the finite set of crossings of 
the front projection of $\Lambda$.  
The blow-down map $\widehat{\Sigma} \to \Sigma$ 
is a diffeomorphism away from the crossings, and the fiber above a crossing is the $\R\mathrm{P}^1$ of lines tangent to the the crossing.  
We denote by $W \subset \Sigma$ (resp. $B \subset \Sigma$) the union of the interiors of the white (resp. black) regions of the complement of the front projection. 

\begin{definition}
Let $\overline{L}$ denote the closure of 
the preimage of $W \cup B$ in $\widehat{\Sigma}$. It is a smooth surface
with boundary, and we refer to its interior $L$ as the \textbf{conjugate surface} of $\Lambda$. 
\end{definition}

The boundary of $\overline{L}$ is canonically homeomorphic to $\Lambda$.  The blowdown map identifies the white and black regions of $\Sigma$ with open subsets of $L$, which we also refer to as white and black regions.  Each exceptional $\R \mathrm{P}^1$-curve on $\widehat{\Sigma}$ meets $\overline{L}$ in an line segment that separates a white region from a black region.  We term such line segments ``exceptional arcs'', and sometimes indicate them in red as in 
Figure \ref{fig:redexample}. 

Let $\overline{T^*\Sigma}=T^*\Sigma\cup T^\infty \Sigma$ be the fiberwise compactification of the cotangent bundle by its real-oriented projectivization at infinity.

\begin{definition}
\label{def:conjlag}
A \emph{conjugate Lagrangian} is the image of an exact Lagrangian embedding $L \to T^*\Sigma$ such that
\begin{enumerate}
\item the composition of $L \to T^*\Sigma$ with the projection $T^*\Sigma \to \Sigma$ coincides with the blowdown map,
\item the intersection of the closure of $L$ in $\overline{T^*\Sigma}$ with the boundary $T^\infty \Sigma$ coincides with $\Lambda$ (hence $L$ extends to an embedding $\overline{L} \to \overline{T^*\Sigma}$),
\item for any neighborhood $U$ of $\Lambda$ in $\overline{T^*\Sigma}$ there is a Hamiltonian isotopy $\{\varphi_t\}_{t \in [0,1]}$ of $T^*\Sigma$, stationary outside $U \cap T^*\Sigma$ and with $\varphi_0$ the identity, such that $\varphi_t(L)$ satisfies (2) for all $t$ and such that $\varphi_1(L)$ is eventually conical.
\end{enumerate}
\end{definition}

We note that for some purposes it would be more convenient to simply consider an eventually conical Lagrangian of which $L$ is a perturbation, but we have found it more natural overall to arrange for the projection to be one-to-one away from the crossings. 
  


\begin{proposition}\label{prop:conjlagexistence}
For any alternating Legendrian $\Lambda$ the conjugate surface $L$ can be embedded into $T^* \Sigma$ as a conjugate Lagrangian.
\end{proposition}
\begin{proof}
It suffices to produce a functon $f:L\to \R$ with the following properties:  (1) $f$ is positive on $L\cap \pi^{-1}(B)$, negative on $\pi^{-1}(W)$, and zero over the crossings, and (2)
 $f$ is locally equal to $\pm \sqrt{n}$ in some local normal coordinate $n$ near a non-crossing boundary of a colored region.  Such a function is easy to arrange away from the crossings.  Outside of the crossings,
$L$ embeds in $T^*\Sigma$ as the graph of $df$.  Near a crossing, the following local
model in Example \ref{ex:localcrossingmodel} completes the proof of existence. Condition (3) is clear in the given local models, but more generally  it suffices to show that the
tangent spaces become $C^1$ close to spaces invariant under Liouville flow: then L
is a graph in a Weinstein neighborhood $T^*(\Lambda \times (R, \infty))$ of a collar neighborhood of $\Lambda$ near infinity, hence can be Hamiltonian isotoped to the zero section $\Lambda \times (R, \infty)$.
\end{proof}

\begin{example}
\label{ex:localcrossingmodel}
We coordinatize a neighborhood of a crossing on $\Sigma$ by $(x,y) \in \R^2,$ with $B$ the first quadrant, $W$ the third quadrant, and $\Lambda$ having front projection the union of the coordinate axes.
Coordinatize $L$ locally by $(s, t) \in \R \times (0,1)$ and define a map ${L}\to {T^*\Sigma}$ by
$$(\, x = s(1-t),\, y = st \; ; \; \xi = \sqrt{\frac{t}{1-t}},\, \eta = \sqrt{\frac{t}{1-t}}\, ).$$
This has closure $\overline{L}\subset \overline{T^*\Sigma}$ wtih boundary at infinity equal to $\Lambda$.
One easily checks that this is a conjugate Lagrangian embedding with primitive $$f = 2s\sqrt{t(1-t)}.$$
This is pulled back from the function $f = 2\,{\rm sgn}(x)\sqrt{xy}$ on $B \cup W$, which satisfies the conditions described in the proof of Proposition \ref{prop:conjlagexistence}.  Also note $\xi = \sqrt{y/x}, \eta = \sqrt{x/y}.$
\end{example}

\begin{remark}
We could have chosen other local models.  This one is designed to be readily compatible with our description of the square move --- see
Proposition \ref{prop:squaremovesurgery}.
\end{remark}

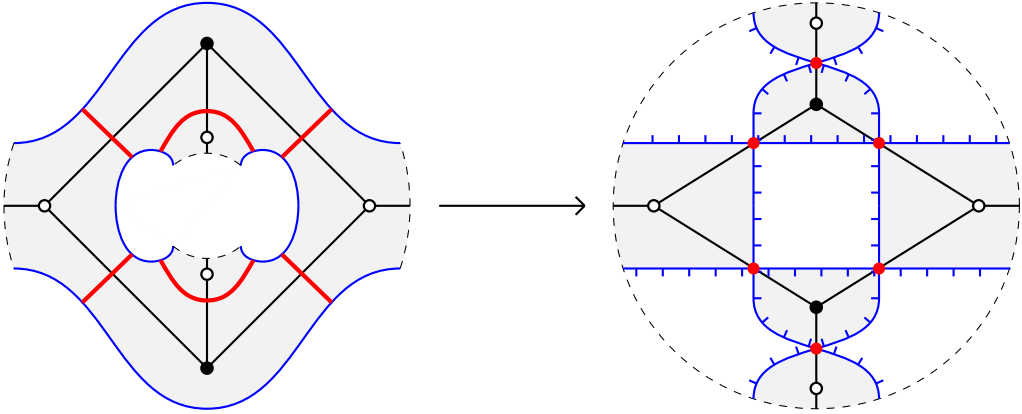
\begin{figure}
\centering
\begin{tikzpicture}
\newcommand*{\off}{18}; \newcommand*{\rad}{2.7}; \newcommand*{\vrad}{1.0};
\newcommand*{\offf}{40}; \newcommand*{\ang}{70}; \newcommand*{\opos}{.68};
\newcommand*{\smallrad}{.7}; \newcommand*{\xrad}{.8}; \newcommand*{\yrad}{.8};
\newcommand*{\ihpos}{.7}; \newcommand*{\ivpos}{.3}; 
\node (l) [matrix] at (0,0) {
\coordinate (b0) at (0,\yrad*\rad); \coordinate (b2) at (0,-\yrad*\rad);
\coordinate (w1) at (-\xrad*\rad,0); \coordinate (w3) at (\xrad*\rad,0);
\coordinate (w4) at (0,\smallrad*1.3); \coordinate (w5) at (0,-\smallrad*1.3);
\path[fill=\fcolor] (180-\off:\rad) to[out=0,in=180] (0,1*\rad) to[out=0,in=180] (\off:\rad) arc[start angle=\off,delta angle=-2*\off,radius=\rad] (-\off:\rad) to[out=180,in=0] (0,-1*\rad) to[out=180,in=0] (180+\off:\rad) arc[start angle=180+\off, delta angle=-2*\off,radius=\rad] (180-\off:\rad);
\path[fill=white] (-.45*\rad,0) to[out=90,in=180] (90+45:1.5*\smallrad) to[out=0,in=90] (90+\offf:\smallrad) arc[start angle=90+\offf,delta angle=-2*\offf,radius=\smallrad] (90-\offf:\smallrad) to[out=90,in=180] (90-45:1.5*\smallrad) to[out=0,in=90] (.45*\rad,0) to[out=-90,in=0] (-90+45:1.5*\smallrad) to[out=180,in=-90] (-90+\offf:\smallrad) arc[start angle=-90+\offf,delta angle=-2*\offf,radius=\smallrad] (-90-\offf:\smallrad) to[in=0,out=-90] (-90-45:1.5*\smallrad) to[in=-90,out=180] (-.45*\rad,0);
\draw[color=white,fill=white] (-.45*\rad,0) to (90-\offf:\smallrad) to (-90-\offf:\smallrad) to (-.45*\rad,0);
\draw[asdstyle] (0,1*\rad) to[in=180,out=0] coordinate[pos=\opos] (otr) (\off:\rad);
\draw[asdstyle] (0,1*\rad) to[out=180,in=0] coordinate[pos=\opos] (otl) (180-\off:\rad);
\draw[asdstyle] (0,-1*\rad) to[in=180,out=0] coordinate[pos=\opos] (obr) (-\off:\rad);
\draw[asdstyle] (0,-1*\rad) to[out=180,in=0] coordinate[pos=\opos] (obl) (180+\off:\rad);
\draw[asdstyle] (-.45*\rad,0) to[out=90,in=180] coordinate[pos=\ihpos] (itl1) (90+45:1.5*\smallrad) to[out=0,in=90] coordinate[pos=\ivpos] (itl2) (90+\offf:\smallrad);
\draw[asdstyle] (-.45*\rad,0) to[out=-90,in=180] coordinate[pos=\ihpos] (ibl1) (-90-45:1.5*\smallrad) to[out=0,in=-90] coordinate[pos=\ivpos] (ibl2) (-90-\offf:\smallrad);
\draw[asdstyle] (.45*\rad,0) to[out=90,in=0] coordinate[pos=\ihpos] (itr1) (90-45:1.5*\smallrad) to[out=180,in=90] coordinate[pos=\ivpos] (itr2) (90-\offf:\smallrad);
\draw[asdstyle] (.45*\rad,0) to[out=-90,in=0] coordinate[pos=\ihpos] (ibr1) (-90+45:1.5*\smallrad) to[out=180,in=-90] coordinate[pos=\ivpos] (ibr2) (-90+\offf:\smallrad);
\draw[dashed] (180-\off:\rad) arc[start angle=180-\off,end angle=180+\off,radius=\rad] (180+\off:\rad);
\draw[dashed] (-\off:\rad) arc[start angle=-\off,end angle=+\off,radius=\rad] (\off:\rad);
\draw[dashed] (90-\offf:\smallrad) arc[start angle=90-\offf,end angle=90+\offf,radius=\smallrad] (90+\offf:\smallrad);
\draw[dashed] (-90-\offf:\smallrad) arc[start angle=-90-\offf,end angle=-90+\offf,radius=\smallrad] (-90+\offf:\smallrad);
\draw[graphstyle] (b0) to (w1) to (b2) to (w3) to (b0);
\draw[graphstyle] (b0) to (90:\smallrad); \draw[graphstyle] (b2) to (-90:\smallrad); \draw[graphstyle] (w1) to (180:\rad); \draw[graphstyle] (w3) to (0:\rad);
\foreach \c in {b0,w1,b2,w3,w4,w5} {\fill (\c) circle (\bvertrad);}
\foreach \c in {w1,w3,w4,w5} {\fill[white] (\c) circle (\wvertrad);}
\draw[exceptarcstyle] (otl) to (itl1); \draw[exceptarcstyle] (otr) to (itr1);
\draw[exceptarcstyle] (obl) to (ibl1); \draw[exceptarcstyle] (obr) to (ibr1);
\draw[exceptarcstyle] (itl2) to[out=60,in=180] (0,\smallrad*1.8) to[out=0,in=180-60] (itr2);
\draw[exceptarcstyle] (ibl2) to[out=-60,in=180] (0,-\smallrad*1.8) to[out=0,in=180+60]  (ibr2);\\};

\node (r) [matrix] at (3*\rad,0) {
\coordinate (bl) at (180+90:\rad/2); \coordinate (br) at (0+90:\rad/2);
\coordinate (wt) at (90+90:\rad*.8); \coordinate (wb) at (-90+90:\rad*.8);
\coordinate (wr) at (0+90:\rad*.9); \coordinate (wl) at (180+90:\rad*.9);
\coordinate (ur) at ($(-\off+90:\rad)!\ifac!(-90+\off:\rad)$); \coordinate (lr) at ($(\off-90:\rad)!\ifac!(90-\off:\rad)$);
\coordinate (ul) at ($(\off+90:\rad)!\ifac!(-90-\off:\rad)$); \coordinate (ll) at ($(-\off-90:\rad)!\ifac!(90+\off:\rad)$);
\coordinate (mr) at ($(ur)!.5!(lr)$); \coordinate (ml) at ($(ul)!.5!(ll)$);
\path[fill=\fcolor, name path=p1] (90+90+\off:\rad) -- (-90+90-\off:\rad) arc[start angle=-\off,end angle=\off,radius=\rad] (\off:\rad) -- (180-\off:\rad) arc[start angle=180-\off,end angle=180+\off,radius=\rad] (180+\off:\rad);
\path[fill=\fcolor, name path=p2]  (180+90-\off:\rad)  to[out=90,in=-90] (lr) to (ur) [out=90,in=-90] to (\off+90:\rad) arc[start angle=90+\off,delta angle=-2*\off,radius=\rad] (90-\off:\rad) [out=-90,in=90] to (ul) to (ll) to[out=-90,in=90] (180+90+\off:\rad) arc[start angle=180+90+\off,delta angle=-2*\off,radius=\rad] (180+90-\off:\rad);
\path[fill=white, name intersections={of=p1 and p2, name=i}] (i-1) -- (i-2) -- (i-4) -- (i-3) -- cycle;
\draw[dashed] (0,0) circle (\rad);
\draw[graphstyle] (wb) to (br) to (wt) to (bl) to (wb);
\draw[graphstyle] (br) to (wr) to (90:\rad);
\draw[graphstyle] (bl) -- (wl) -- (180+90:\rad);
\draw[graphstyle] (wt) to (90+90:\rad); \draw[graphstyle] (wb) to (-90+90:\rad);
\foreach \c in {bl,br,wt,wb,wr,wl} {\fill[black] (\c) circle (\bvertrad);}
\foreach \c in {wt,wb,wr,wl} {\fill[white] (\c) circle (\wvertrad);}
\draw[asdstyle,righthairs,name path=h1] (90+90+\off:\rad) -- (-90+90-\off:\rad);
\draw[asdstyle,righthairs,name path=h2] (-90+90+\off:\rad) -- (90+90-\off:\rad);
\draw[asdstyle,lefthairs,name path=v1a] (mr) to (ur) [out=0+90,in=180+90] to (\off+90:\rad);
\draw[asdstyle,righthairs,name path=v1b] (mr) to (lr) to[in=0+90,out=180+90] (180+90-\off:\rad);
\draw[asdstyle,righthairs,name path=v2a] (ml) to (ul) [out=0+90,in=180+90] to (-\off+90:\rad);
\draw[asdstyle,lefthairs,name path=v2b] (ml) to (ll) to[in=0+90,out=180+90] (180+90+\off:\rad);
\foreach \p/\q in {h1/v1b,h1/v2b,h2/v1a,h2/v2a,v1a/v2a,v1b/v2b} {\fill[name intersections={of={\p} and {\q}, name=i},red] (i-1) circle (.08);}\\};
\coordinate (rw) at ($(r.west)+(-2mm,0)$); \coordinate (le) at ($(l.east)+(2mm,0)$);
\draw[thick] (le) -- ($(rw)+(-2mm,0)$) -- (rw);
\draw[arrhdstyle] ($(rw)+(-1.2mm,1.2mm)$) -- (rw); \draw[arrhdstyle] ($(rw)+(-1.2mm,-1.2mm)$) -- (rw);
\end{tikzpicture}
\caption{\label{fig:redexample} 
The right picture shows an alternating Legendrian in $T^\infty D^2$ and the associated bicolored graph.  The left shows its conjugate Lagrangian $L$ together with the strict transform of the bicolored graph under $L \to D^2$. The shaded regions on the right indicate the image of $L \to D^2$. The exceptional arcs and their images are indicated in red.}
\end{figure}

\begin{remark}
\label{rmk:zerosection}
If we isotope $\Lambda$ so that at each crossing both strands are tangent to the graph $\Gamma$ to all orders, hence nontransverse to each other, we can arrange a conjugate Lagrangian embedding of $L$ such that its intersection with the zero section is exactly $\Gamma$. To do this we choose $f \in C^\infty(B \cup W)$ so that on the white regions it is equal to $0$ along $\Gamma$ and increases monotonically to $\infty$ towards the boundary of $L$ (so the preimage of $(R,\infty)$ retracts onto $\Gamma$ for all $R$).  In particular the critical locus of $f$ on the white regions is exactly $\Gamma$.  Likewise, we ask that on the black regions $f$ is zero along $\Gamma$ and decreases monotonically to $-\infty$ towards the boundary. Then the tangency condition at the crossing implies that the graph of $df$ above $B \cup W$ has as its closure a conjugate Lagrangian filling of $\Lambda$ -- the exceptional arcs are embedded as the conormal lines to $\Gamma$ at the crossings.  It is clear that $L$ is exact since it retracts onto its intersection with the zero section, where the Liouville form is zero.
\end{remark}

\subsection{Alternating sheaves}\label{sec:altsh}

Given an alternating Legendrian $\Lambda \subset T^\infty \Sigma$ and a
conjugate Lagrangian $L$, we are interested in Lagrangian branes supported on $L$ and their sheaf quantizations. 


Note first that since
each cotangent fiber intersects $L$ in a unique point above the white and black regions, and in no points
above the null regions, any such quantization will be supported on the closure of the union of the black and white regions.
Moreover, the stalks of the sheaf on these regions will be locally constant and isomorphic to the stalk of the local system on $L$, with a degree shift between white and black regions. 
Since $\partial L = \Lambda$, the resulting sheaf
will have microsupport at infinity contained in $\Lambda$.
Thus we are led to consider sheaves of the following form:

\begin{definition}
Let $\Lambda \subset T^\infty \Sigma$ be equipped with an alternating coloring. 
An \textbf{alternating sheaf} is an object of $\Sh_{\Lambda}(\Sigma;\coeffs)$ 
whose support is contained in the closure of the union of the white and black regions. 
\end{definition}

By a locally costandard sheaf on $\Sigma$ we mean a sheaf of the form $j_!\cL$ for a locally constant sheaf $\cL$ of invertible $\coeffs$-modules on an open subset $U$ with inclusion $j: U \into \Sigma$.  Likewise, by a locally standard sheaf we mean any sheaf of the form $j_*\cL$ for such an $\cL$.  

\begin{proposition}\label{prop:altsheaftriangle}
Let $\cF$ be an alternating sheaf whose microstalks have cohomology vanishing outside of degree zero.
Then $\cF$ fits into an exact triangle
$$\cF_W[1] \to \cF \to \cF_B \xrightarrow{[1]}$$
where $\cF_W$ is a locally costandard sheaf supported on $\overline{W}$ and $\cF_B$ a locally standard sheaf supported on $\overline{B}$. 
\end{proposition}
\begin{proof}
Let $w: W \to \Sigma$ and $b: B \to \Sigma$ be the inclusions of 
the interior of the (open) white and black regions, respectively. 
Then it suffices to show $\cH^0(\cF) \cong w_! w^! \cF$ is a locally costandard sheaf
on the union of the white regions, $\cH^1(\cF) \cong b_* b^* \cF[1]$ is a locally standard sheaf
on the union of the black regions, and all other cohomology sheaves vanish. 
Note that the closure of a given white region is disjoint from all other white regions; similarly
the closure of a given black region is disjoint from all other black regions. 

In a neighborhood $U$ of a smooth point $p$ of the front projection, one has either a null
region separated from a white region, or a black region separated from a null region.  In the first case consider the morphism $\cF_n \to \cF_w$ of stalks at either end of a characteristic path between points $n$ and $w$ in the null and white regions of $U$; in the second consider similarly the morphism $\cF_b \to \cF_n$ between points $b$ and $n$ in the black and null regions.
In both cases, the cone has, by assumption,
cohomology only in degree zero, and also by assumption $\cF_n = 0$.  It follows
that $\cF_b$ has cohomology only in degree 1, and $\cF_w$ has cohomology only in degree 0. 

The microsupport prescribes
that the generization maps from stalks at the boundary of the black regions into the black
regions give isomorphisms, and that the generization maps from the stalks on the 
smooth boundaries of the white regions to the nearby null regions are isomorphisms
(to zero).  In particular, the stalks of $\cH^0(\cF)$ vanish outside the open white regions, 
and $\cH^1(\cF)$ is a locally constant sheaf on the closure of the black regions. 
The result follows. 
\end{proof}

The triangles appearing in Proposition \ref{prop:altsheaftriangle} are classified by  $\sheafhom(\cF_B[-1],\cF_W[1])$; we now recall from \cite{STZ}
which such extensions give objects in $Sh_\Lambda(\Sigma)$. 

\begin{proposition}\label{prop:altsasexts}
Let $\cL_W$ and $\cL_B$ be local systems on the interiors of the white
and black regions of an alternating coloring.  Then 
$\sheafhom(b_* \cL_B [-1], w_! \cL_W[1])$ is a direct sum of skyscraper sheaves in degree zero at the crossings of $\pi(\Lambda)$. The stalk at a crossing is isomorphic to $\Hom(\ell_W, \ell_B)$, where $\ell_W$, $\ell_B$ are generic stalks in the white and black regions of a neighborhood of the crossing.

The extension determined by a given class in $\Ext^1(b_* \cL_B, w_! \cL_W[1])$ 
has microsupport in $\Lambda$ iff the corresponding local elements of $\Hom(\ell_W, \ell_B)$
are all isomorphisms. 
\end{proposition}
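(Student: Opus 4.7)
Both claims are local at each crossing of $\pi(\Lambda)$: the supports of $b_*\cL_B$ and $w_!\cL_W$ meet only at the crossings, so the sheafhom is a priori supported there, and the microsupport condition of the second part can be tested pointwise. Fix a crossing and introduce adapted coordinates (\Cref{def:adapted-coord}) in which the two strands are the coordinate axes, $B$ is the first quadrant $Q_1$, $W$ is the third quadrant $Q_3$, and the null regions are $Q_2$ and $Q_4$.

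For the first statement, write $b_*\cL_B = i_*\cL_B$ for the closed inclusion $i\colon \overline{B} \hookrightarrow \Sigma$ and use the adjunction
\[
R\sheafhom(i_*\cL_B,\, w_!\cL_W) \cong i_*\, R\sheafhom(\cL_B,\, i^! w_!\cL_W).
\]
The stalk of $i^! w_!\cL_W$ at the corner is a local-cohomology computation: for a small disk $U$ around the origin, $U$, $U \smallsetminus \overline{B}$, and $Q_3$ are each contractible while $(U \smallsetminus \overline{B}) \smallsetminus Q_3$ splits into two contractible components. The long exact sequence for $R\Gamma_{\overline{B}}$ applied to the open/closed triangle for $w$ then gives $(i^! w_!\cL_W)_0 \cong \ell_W[-2]$. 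Applying $R\sheafhom(\cL_B, -)$ and shifting by $[2]$ to absorb the shifts in the proposition's statement yields a skyscraper in degree zero with stalk canonically identified with $\Hom(\ell_W,\ell_B)$ (which agrees with $\Hom(\ell_B,\ell_W)$ under the standard self-duality for invertible $\coeffs$-modules).

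For the second statement, invoke the local classification of $\Sh_\Lambda$ near a crossing from \Cref{ex:crossing}: objects correspond to quadruples $(N, E, S, W_{\mathrm{side}})$ of complexes with a commuting square of generization maps, satisfying acyclicity of the associated total complex $S \to W_{\mathrm{side}} \oplus E \to N$. In the alternating setting the two null quadrants force $E = W_{\mathrm{side}} = 0$, so the commuting square collapses and the only remaining datum is the higher connecting morphism between the two nonzero entries. A direct unwinding identifies this morphism with the local component at the crossing of the extension class $\xi \in \Ext^1(b_*\cL_B, w_!\cL_W[1])$, i.e.\ an element of $\Hom(\ell_W,\ell_B)$ by the first part. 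The acyclicity condition then amounts to requiring that this element be a quasi-isomorphism, equivalently an isomorphism of invertible modules; running the argument at every crossing finishes the proof.

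The main obstacle is the derived bookkeeping in the second part: once both null entries vanish, the ``crossing condition'' of \Cref{ex:crossing} is no longer visible at the level of the naive commuting square, and one must carefully unwind the dg structure to see how the extension class appears as the effective connecting map. A reassuring cross-check is that the trivial extension $w_!\cL_W[1] \oplus b_*\cL_B$ has singular support at the crossing filling the entire quarter-plane of conormal directions, not just the two axes of $\Lambda$ — so non-isomorphism extension classes really do produce extra microsupport, consistent with the claimed ``only if'' direction.
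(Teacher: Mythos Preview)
Your argument is correct and the overall shape matches the paper's, though the first part uses a genuinely different computation. The paper evaluates $\sheafhom(b_*\cL_B[-1], w_!\cL_W[1])$ via the Verdier-duality identity $\sheafhom(X,Y) = \D(\D Y \otimes X)$, reducing to the observation that $w_*\cL_W^\vee \otimes b_*\cL_B$ is supported only at crossings since $\overline W \cap \overline B$ is exactly that set; you instead use the closed-inclusion adjunction $R\sheafhom(i_*\cL_B, -) \cong i_* R\sheafhom(\cL_B, i^! -)$ and a direct local-cohomology count to get $(i^! w_!\cL_W)_0 \cong \ell_W[-2]$. Both routes are short; yours is perhaps more hands-on, the paper's more conceptual and avoids the explicit quadrant bookkeeping.

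One quibble: your computation (and in fact the paper's, if one tracks the final Verdier dual carefully) produces $\Hom(\ell_B,\ell_W)$, not $\Hom(\ell_W,\ell_B)$. Your parenthetical fix via ``self-duality for invertible $\coeffs$-modules'' is not justified at this point, since the proposition as stated does not assume $\cL_W,\cL_B$ have rank one. This is really a typo-level discrepancy in the statement rather than a flaw in your argument, but you should not paper over it with an identification that fails in higher rank.

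For the second part you correctly isolate the subtlety: once the two null stalks vanish, the naive commuting square carries no information, and the microsupport condition must be read off from the dg enhancement, where the extension class reappears as the connecting map $S \to N[1]$. The paper handles this the same way --- by deferring to the explicit computation of \cite[Theorem~3.12]{STZ} (packaged alternatively via $\mu hom$) --- so your ``direct unwinding'' is at the same level of detail as the original.
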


\begin{proof}
Recall that $\sheafhom(X, Y) = \D( \D Y \otimes X)$, where $\D$ denotes Verdier duality.  
So 
\begin{align*}\sheafhom(b_* \cL_B[-1],w_! \cL_W[1]) &= \D ( \D (w_! \cL_W [1]) \otimes b_* \cL_B[-1]) \\
                                                    &= 
  \D(w_* \cL_W^\vee[1] \otimes b_* \cL_B[-1]) = \D(w_* \cL_W^\vee \otimes b_* \cL_B).
\end{align*}  
If $p$ is any point, then 
$(w_* \cL_W^\vee \otimes b_* \cL_B)_p = (w_* \cL_W^\vee)_p \otimes (b_* \cL_B)_p$,
which can only be nonzero for $p$ in the intersection of the closures of the white
and black regions, i.e., at a crossing.  The above formula
shows that the stalk here is evidently the hom space
between nearby stalks of the local systems in the white and black regions.  This proves the 
first statement. 

The second statement follows from a direct computation as in \cite[Theorem 3.12]{STZ}.  This calculation can be packaged as the statement that 
the above hom space is also the stalk of the Kashiwara-Schapira $\mu hom$ sheaf 
(see \cite[Sec. 6]{KS}) along the interior of $ss(w_! \cL_W[1]) \cap ss(b_* \cL_B[-1])$
--- the covectors pointing into the black region ---
and the cone over the stalk of the $\mu hom$ becomes the microstalk of the cone.  

Another way to see this is to perform a contact transformation, moving 
the Legendrian graph $ss(w_! \cL_W[1]) \cap ss(b_* \cL_B[-1])$ to one whose front
projection is locally an embedding near the desired microstalk, whereupon the 
desired $\mu hom$ calculation reduces to the above $\sheafhom$ calculation. 
\end{proof}

\begin{theorem} \label{thm:charts}
The full subcategory of $\Sh_\Lambda(\Sigma;\coeffs)$ consisting of alternating sheaves is equivalent to the category of locally constant sheaves on $L$.
\end{theorem}
\begin{proof}
The preceding propositions provide a complete description of the full subcategory of alternating sheaves as glued out of locally constant sheaves on white and black regions. To see the claim, we observe that an identical description applies to locally constant sheaves on $L$.

Indeed, it is clear that a local system $\cL$ on $L$ fits into a triangle
$$w_!\cL_W \to \cL \to b_*\cL_B \xrightarrow{[1]},$$
where $\cL_W$, $\cL_B$ are local systems on the white and black regions and $w$, $b$ now denote the inclusion of these regions into $L$ (rather than $\Sigma$). We now have that $\sheafhom(b_* \cL_B, w_! \cL_W[1])$ is a direct sum of constant sheaves supported on the exceptional arcs. Their stalks are locally isomorphic to $\Hom(\ell_W, \ell_B)$ and the extensions which are locally constant are classified by sections nonvanishing on all arcs.

From this description it is clear not only that we have a correspondence at the level of objects, but that morphisms in the indicated categories are the same: they can be reduced to identical calculations involving the outer terms in the triangle above and that of Proposition \ref{prop:altsheaftriangle}.
\end{proof}





Recall from the proof of Proposition \ref{prop:open} that full faithfulness implies we have an open inclusion of moduli spaces.

\begin{corollary}
The locus of $\cM_1(\Lambda)$ parametrizing alternating sheaves of microlocal rank one is open and isomorphic to the algebraic torus $\Loc_1(L)$. 
\end{corollary}

As indicated earlier, it is straightforward to see that alternating sheaves are exactly the objects obtained from sheaf quantization of the conjugate Lagrangian. Note that to discuss its quantization we perturb $L$ to be eventually conical as guaranteed by condition (3) in Definition \ref{def:conjlag}. 


\begin{proposition}
An object of $Sh_{\Lambda}(\Sigma)$ is obtained by sheaf quantization of a rank one local system on $L$ if and only if it is an alternating sheaf of microlocal rank one.
\end{proposition}
\begin{proof}
By Corollary \ref{cor:quantranks} the sheaf quantization of a rank one local system on $L$ is supported on the union of the closures of the white and black regions and has rank one stalks (in some degree) on their interiors. It follows that it has microlocal rank one. Since it is microsupported at infinity along $\Lambda$, it follows from Proposition \ref{prop:altsasexts} that it is alternating. The only if part of the statement follows since a fully faithful functor from $\Loc_1(L)$ to itself which is defined over $\Z$ must be an equivalence. 
\end{proof}

Since $\Loc_1(L)$ is also isomorphic to the space of local systems on $\Gamma$, it has natural coordinates described by holonomies around the faces of $\Gamma$ (that is, around the contractible regions of $\Sigma \smallsetminus \Gamma$). However, we will see later that it is also natural twist our identification between alternating sheaves and local systems on $L$ by signs.


Following \cite[Prop. 5.12]{STZ} such sign choices may be organized as follows. Suppose $\cF$ is an alternating sheaf and $\ell_W$, $\ell_B$ stalks of $\cF_W$, $\cF_B$ in the neighborhood of a fixed crossing of $\pi(\Lambda)$. Picking one of the two components of $\Lambda$ above the crossing picks out an isomorphism between $\ell_W$ and $\ell_B[1]$: each is isomorphic to the microstalk of $\cF$ at a point of that component on either side of the crossing, and parallel transport in the microlocalization $\cF_\Lambda$ defines an isomorphism between these microstalks. Choosing the other component changes the isomorphism $\ell_W \cong \ell_B[1]$ by a sign. 

In particular, if we choose a component of $\Lambda$ above every crossing these isomorphisms between stalks of $\cF_W$ and $\cF_B[1]$ assemble into a local system on $L$: the sheaves $\cF_W$ and $\cF_B[1]$ define a canonical local system on the complement of the exceptional arcs, and the construction above defines a parallel transport across the exceptional arcs. On the other hand, since $\Sigma$ is oriented there is a consistent notion of which component of $\Lambda$ is clockwise from the white/black regions and which is counterclockwise.

The construction described in Theorem \ref{thm:charts} corresponds to making the same choice at all crossings. We refer to the resulting isomorphism with $\Loc_1(L)$ as the \textbf{standard trivialization} of the space of alternating sheaves.

\begin{definition}\label{def:coords}
The \textbf{standard face coordinates} on the space of alternating sheaves are the counterclockwise holonomies around the faces of $\Gamma$ under the standard trivialization. The \textbf{positive face coordinates} are the negatives of the standard face coordinates.
\end{definition}

The positive coordinates are so-called because, as we will see in Section \ref{sec:squaremove}, their transformations are described by subtraction-free expressions. We use the term coordinate somewhat loosely: depending on the number of contractible regions of $\Sigma \smallsetminus \Gamma$ their boundaries may not form a basis of $H_1(\Gamma;\Z)$.

\section{Cluster combinatorics from Legendrian isotopy}
\label{sec:cluster}

Thus far, we have considered structures which arise from the geometry of
a Legendrian link in a fixed position.  We turn now to 
comparisons between these structures arising from Legendrian isotopies. 

At the level of categories or of moduli spaces, isotopies give rise to equivalences: 
given an isotopy $\Lambda \to \Lambda'$, one gets by \cite{GKS} an equivalence 
$Sh_\Lambda(\Sigma) \to Sh_{\Lambda'}(\Sigma)$, and a corresponding isomorphism
of the moduli spaces.  
However, different isotopy representatives of $\Lambda$ present different structures 
on the moduli space.  In particular, we saw in Section \ref{sec:braids} that an isotopy representative
in which $\Lambda$ is presented as a union of positive braids has, in some cases,
a canonical identification with a positroid stratum or a wild character variety.  On the other hand, 
we saw in Section \ref{sec:conjlag} that an isotopy representative which is alternating comes with
a natural filling $L$, hence its moduli space has an abelian chart $\Loc_1(L) \into 
\cM_1(\Sigma, \Lambda)$.

This raises a series of questions: 
\begin{enumerate}
\item Which Legendrians have alternating representatives; how many are there and what 
are the isotopies between them?
\item Given an isotopy between alternating Legendrians, what is the change of coordinates
between the corresponding abelian charts?
\item Given an isotopy from an alternating Legendrian to a localized positive braid, can the 
coordinates of the abelian chart from the filling be written in terms of some coordinates natural from
the nonabelian point of view of the positive braid?
\end{enumerate}

The first question is one of topological combinatorics, and the foundational results in this
direction are due to D. Thurston \cite{Thu}.  We survey and extend his results in Section \ref{sec:Dylan}, showing in particular that the alternating Legendrian of a reduced plabic graph admits a homotopically
unique isotopy to a Legendrian of positroid type.  Following ideas of \cite{GK} we show that this leads to alternating representatives of the Legendrian braid satellites of \Cref{sec:braids}.

Though the second and third questions are implicitly Floer theoretic in nature, they can be reduced to combinatorics given the results collected so far. In the previous section, 
we established a sheaf-theoretic description of the abelian charts, and 
the 
constructible sheaves under consideration can be described locally in terms of quiver representation theory. 
Any isotopy can be factored into a sequence of Reidemeister moves, and the isomorphism
induced by \cite{GKS} factors accordingly; each term in this factorization 
can be described explicitly as reviewed in \Cref{sec:contactinv}.  

Towards the second question, 
we show that the {\em square move} on bicolored graphs is interpolated by a 
Legendrian isotopy of their corresponding Legendrians. This move is fundamental, for example any isotopy between reduced alternating Legendrians in $T^\infty D^2$ can be factored into a sequence of square moves. We show that the abelian charts on either side of a square move are related by the cluster $\cX$-transformation classically associated to the square move.
The conjugate Lagrangians themselves are related by Lagrangian surgery, a perspective 
which we develop more systematically in \cite{STW}. 

In the direction of the third question, we consider the unique isotopy from the alternating
Legendrian associated to a reduced plabic graph to the corresponding positroid braid.  Since
this isotopy gives abelian coordinates on the Grassmannian, the natural question is how to express
these in terms of Pl\"ucker coordinates.  We identify the resulting expression with the 
boundary measurement map of Postnikov \cite{Po}, which describes the answer in terms of the combinatorics of flows or perfect matchings on the graph.

\subsection{Alternating Legendrians from braids} \label{sec:Dylan}
We consider here the existence of alternating isotopy representatives of the Legendrians studied in \Cref{sec:braids}. These were braid satellites of cocircle fibers of $T^\infty \Sigma$, and their rank-one moduli spaces were spaces of filtered local systems on $\Sigma$.  The main result is that essentially all such Legendrians have alternating representatives. From our point of view this accounts for the appearance of bicolored graphs in the study of such spaces. After proving the general statement we explain how in various examples alternating representatives can be constructed explicitly, bringing us into contact with the combinatorics of triangulations and double wiring diagrams familiar in cluster theory.

We begin our discussion with a class of particularly simple Legendrians:

\begin{definition}\label{def:reduced} A Legendrian $\Lambda \subset T^\infty D^2$
is \textbf{reduced} if it satisfies the following conditions:
\begin{enumerate}
\item Along $\partial D^2$ the strands of $\pi(\Lambda)$ have alternating orientations.
\item The front projection $\Lambda \to D^2$ is an immersion (that is, $\pi(\Lambda)$ has no cusps).
\item There are no parallel crossings: if $p_1$, $p_2$ are intersection points of two strands, one is oriented from $p_1$ to $p_2$ and the other from $p_2$ to $p_1$.
\item No strands have self-intersections.
\item All strands meet the boundary of the disk.
\end{enumerate}
\end{definition}

Our terminology is modeled that of \cite{Po} for bicolored graphs: an embedded bicolored graph $\Gamma \subset D^2$ is a \textbf{reduced plabic graph} if its associated Legendrian is reduced. Note that our conventions implicitly allow us to assume that a reduced plabic graph has white vertices where it meets the boundary of $D^2$.

If $\Lambda \subset T^\infty D^2$ is reduced the set $\pi(\Lambda) \cap \partial D^2$ of intersections between its front projection and the boundary of the disk are divided into sets of incoming and outgoing points (we freely pass between orientations and co-orientations following \Cref{sec:altcolor}).  Each strand of $\pi(\Lambda)$ has one incoming endpoint and one outgoing endpoint, hence $\Lambda$ defines a matching between these two sets.  Conversely, we can fix a set of points on $\partial D^2$, label them alternatively incoming and outgoing, choose a matching between those of opposite labels, and ask for reduced Legendrians realizing this matching.  In this direction we have the following reformulation of a fundamental result of D. Thurston:

\begin{proposition}\label{prop:Thurston}
\cite{Thu} Fix a set of points on $\partial D^2$ alternatively labeled as incoming and outgoing.  Every matching between incoming and outgoing points is realized by a reduced alternating Legendrian in $T^\infty D^2$.  Moreover, any two reduced alternating Legendrians with the same matching are Legendrian isotopic through a series of square moves.
\end{proposition}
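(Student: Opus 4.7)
The plan is to exploit the bijection between reduced alternating Legendrians in $T^\infty D^2$ and reduced plabic graphs in $D^2$ established by the construction of \Cref{sec:altcolor}, and to reduce the proposition to the combinatorics of reduced words in the symmetric group. Once the boundary configuration of incoming and outgoing points is fixed, the matching is equivalent to a permutation $w \in S_n$ (where $2n$ is the number of boundary intersections), with each strand of $\pi(\Lambda)$ connecting incoming point $i$ to outgoing point $w(i)$; transversality together with the alternating coloring forces crossings to correspond to inversions of $w$.

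For the existence part, I would realize $w$ explicitly by a wiring diagram built from a reduced word $w = s_{i_1} \cdots s_{i_\ell}$ in $S_n$. Each simple transposition $s_{i_k}$ contributes a single transverse crossing at the corresponding height, and the concatenation yields an immersed collection of co-oriented arcs with exactly $\ell(w)$ crossings. The resulting strand diagram is automatically reduced: the number of crossings between any two specific strands equals the number of times the corresponding simple reflections conflict in a reduced word, which is at most one, ruling out parallel crossings, self-intersections, and closed strands. The alternating coloring is then induced by the canonical checkerboard coloring of the complement, with black/white regions determined by the co-orientation data along the boundary.

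For the uniqueness part, let $\Lambda, \Lambda'$ be two reduced alternating Legendrians realizing the same matching $w$. I would first isotope each, through reduced alternating diagrams, to a wiring-diagram form corresponding to some reduced word for $w$. By Matsumoto's theorem, any two reduced words for $w$ are connected by a sequence of braid relations $s_i s_{i+1} s_i \leftrightarrow s_{i+1} s_i s_{i+1}$ and far-commutations $s_i s_j \leftrightarrow s_j s_i$ with $|i-j|>1$. The braid relation corresponds at the level of the bicolored graph to a square move at a quadrilateral face, as can be verified directly by comparing the two sides of \Cref{fig:squaremoveintro}. The far-commutation corresponds to swapping the heights of two disjoint crossings, which is a trivial isotopy of reduced alternating Legendrians. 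Composing these moves yields the required sequence of square moves relating $\Lambda$ to $\Lambda'$.

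The principal obstacle is rigorously implementing the first step of the uniqueness argument: given an arbitrary reduced alternating Legendrian, produce a Legendrian isotopy through reduced alternating diagrams that places it into wiring-diagram form. Equivalently, one must exclude any ``hidden'' isotopy equivalences among reduced diagrams that are not captured by square moves and commutations. I would follow Thurston's exchange-argument strategy: since parallel crossings are forbidden by reducedness, any two nearby crossings involving disjoint pairs of strands can be swapped by a local isotopy without leaving the reduced alternating class, and one then argues by induction on a crossing-height statistic that such local swaps suffice to bring any reduced diagram into normal form. The alternating coloring plays a role here in certifying that the intermediate diagrams remain alternating after each local swap.
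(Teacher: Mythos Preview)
The paper does not prove this proposition; it is cited directly from Thurston \cite{Thu}, whose argument proceeds via the combinatorics of triple point diagrams rather than through reduced words in a Coxeter group. So there is no in-paper proof to compare against, but your proposal itself has a genuine gap.

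Your reduction to reduced words in $S_n$ is incorrect: the minimal number of crossings in a reduced diagram realizing a given matching is \emph{not} the Coxeter length of the associated permutation. Take $n=3$ with boundary points cyclically ordered $\text{in}_1,\text{out}_1,\text{in}_2,\text{out}_2,\text{in}_3,\text{out}_3$, and consider the matching $\text{in}_1\to\text{out}_3$, $\text{in}_2\to\text{out}_2$, $\text{in}_3\to\text{out}_1$. The three chords are pairwise non-interleaving on the circle, so the matching is realized by a reduced diagram with \emph{zero} crossings; yet the permutation $(1\,3)\in S_3$ has Coxeter length $3$. The point is that the boundary points sit on a circle, not on two parallel lines, so the relevant combinatorics is that of chord diagrams (or, in Postnikov's language, decorated permutations and bounded affine permutations), not linear wiring diagrams for $S_n$. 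Consequently your existence argument does not produce the correct objects, and your uniqueness argument via Matsumoto's theorem does not apply.

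Even setting this aside, the step you flag as the ``principal obstacle''---bringing an arbitrary reduced alternating diagram into some normal form by moves within the reduced alternating class---is not a detail to be filled in but essentially the entire content of the theorem. Thurston's proof handles both existence and move-equivalence directly at the level of triple diagrams, without passing through any wiring-diagram normal form; if you want a self-contained argument you should consult \cite{Thu} rather than attempt the $S_n$ route.
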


We note in passing that while in applications this fact is often used as a purely combinatorial statement (e.g. \cite{Po}), its relevance to Legendrian knot theory was specifically anticipated in \cite{Thu}.  We can complement the part of \Cref{prop:Thurston} dealing with isotopies as follows:

\begin{proposition}\label{prop:uniqueisotopies}
Suppose $\Lambda, \Lambda' \subset T^\infty D^2$ are reduced Legendrians such that $\pi(\Lambda) \cap \partial D^2 = \pi(\Lambda') \cap \partial D^2$ compatibly with incoming/outgoing labels.  If $\Lambda$ and $\Lambda'$ define the same matching of boundary points, they are Legendrian isotopic. This isotopy can be chosen so that it is stationary above $\partial D^2$, and only passes through Legendrians whose front projections are immersions.  Moreover, the space of such isotopies is contractible.
\end{proposition}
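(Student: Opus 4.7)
The plan is to reduce the statement to a classical fact about configurations of embedded arcs in $D^2$, using the reduced hypothesis to strip away all ambiguity beyond the matching.

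First, since $\Lambda$ and $\Lambda'$ are reduced, their front projections have no cusps, and each strand is an embedded arc from one boundary point to another. The co-orientation of each strand is determined by the alternating boundary co-orientations together with continuous propagation along the strand, so a Legendrian isotopy with immersed fronts stationary above $\partial D^2$ is equivalent to an isotopy of the underlying multicurve $\pi(\Lambda) \subset D^2$ through immersed multicurves with transverse intersections and fixed boundary behavior. Thus the problem is entirely about configurations of arcs in $D^2$.

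The key combinatorial observation is that for a reduced Legendrian with a prescribed matching, the combinatorial type of $\pi(\Lambda)$ is entirely determined: two strands cross exactly once if and only if their endpoint pairs interlock along $\partial D^2$, and are otherwise disjoint. This follows because the reduced hypothesis forbids self-intersections and parallel crossings, while the parity of the number of intersections between two arcs with fixed endpoints in $D^2$ is determined by whether those endpoints interlock on $\partial D^2$. In particular, $\pi(\Lambda)$ and $\pi(\Lambda')$ realize abstractly the same ``wiring diagram'' configuration.

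Granting this, the statement reduces to the following classical fact: given two embeddings of a fixed multicurve combinatorial type into $D^2$ with matching boundary data, the space of isotopies between them through immersed configurations of the given type is contractible. I would prove this by induction on the number of strands, at each step using that $\mathrm{Diff}(D^2,\partial D^2)$ is contractible (Smale's theorem, or Alexander's trick), together with the observation that the space of embedded arcs in $D^2$ with fixed endpoints and prescribed transverse intersections with a given finite collection of previously placed arcs is again contractible---it is a quotient of $\mathrm{Diff}(D^2,\partial D^2)$ by the (contractible) stabilizer of such an arc, or alternatively a space of sections of a contractible fibration.

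The hardest step will be the inductive contractibility argument: while each fiber is intuitively contractible, the technical work is to show that adding a new strand with prescribed intersections to an existing configuration yields a contractible space of choices, uniformly in parameters. One concrete route is to first produce, by iteratively straightening strands in chart neighborhoods, a boundary-fixing diffeomorphism of $D^2$ carrying $\pi(\Lambda)$ to $\pi(\Lambda')$, and then use the contractibility of $\mathrm{Diff}(D^2,\partial D^2)$ to promote this single diffeomorphism to a coherent family of diffeomorphisms giving the claimed contractibility of the space of isotopies.
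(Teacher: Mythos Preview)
Your central combinatorial claim---that the crossing pattern of a reduced Legendrian is determined by the matching---is not correct. Condition (3) of Definition~\ref{def:reduced} forbids \emph{parallel} bigons but explicitly permits antiparallel ones: it asserts that any two crossings of a given pair of strands are traversed in opposite orders along the two strands, not that there is at most one such crossing. For a concrete example, take four boundary points $1,2,3,4$ in cyclic order with alternating in/out labels and the matching $A\colon 1\to 4$, $B\colon 3\to 2$. One reduced Legendrian has $A$ and $B$ disjoint; another has them crossing twice antiparallel; both satisfy (1)--(5). More broadly, any two reduced alternating Legendrians with the same matching but related by a nontrivial sequence of square moves (Proposition~\ref{prop:Thurston}) already have different front diagrams. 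Your parity argument gives the correct count modulo $2$, but nothing more.

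This undercuts the reduction to $\mathrm{Diff}(D^2,\partial D^2)$: there is in general no boundary-fixing diffeomorphism taking $\pi(\Lambda)$ to $\pi(\Lambda')$. Even when the combinatorial types happen to agree, the diffeomorphism argument only controls isotopies that stay within a fixed combinatorial stratum, whereas the proposition concerns the larger space of isotopies through \emph{arbitrary} immersed fronts, which can and do cross between strata via Reidemeister moves; contractibility of one stratum does not give contractibility of the component. The paper's proof addresses this directly: it moves the strands of $\Lambda$ onto those of $\Lambda'$ one at a time by retracting across embedded disks bounded by a segment of the current strand and a segment of its target. The reduced hypotheses---no self-intersections and no parallel bigons---are precisely what ensure the co-orientations are compatible, so that each retraction lifts to a Legendrian isotopy through immersed fronts. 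The same retraction procedure, applied parametrically to a family $g_s$ of self-isotopies and observed to degenerate to the identity when $g_s$ does, then contracts $\Aut(\Lambda)$.
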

\begin{proof}
We construct an isotopy $\Lambda \to \Lambda'$ of the stated kind as follows.  We notate it as a family of Legendrian embeddings $f_t:\Lambda \to T^\infty D^2$ depending smoothly on $t \in [0,1]$ such that $f_0$ is the identity map on $\Lambda$ and $f_1$ is a diffeomorphism from $\Lambda$ to $\Lambda'$.  Number the components of $\Lambda'$ (hence also $\Lambda$) $1$ through $m$, denoting the $k$th components by $\Lambda'_k$, $\Lambda_k$.  As $t$ varies from $(\ell-1)/m$ to $\ell/m$, we take $f_t$ to be independent of $t$ except along $\Lambda_\ell$. 

For $(\ell-1)/m \leq t \leq \epsilon + (\ell-1)/m =: t_0$ we let $f_t$ be a small perturbation such that the part of the front projection of $\Lambda'_\ell$ that does not meet the front projection of $f_{t_0}(\Lambda_\ell)$ (that is, $\pi(\Lambda'_\ell) \smallsetminus \pi(f_{t_0}(\Lambda_\ell)) \cap \pi(\Lambda'_\ell)$) has finitely many components. We define $f_t$ for $t_0 \leq t\leq \ell/m$ inductively as follows.  Suppose $t_i$ is such that $\pi(\Lambda'_\ell) \smallsetminus \pi(f_{t_i}(\Lambda_\ell)) \cap \pi(\Lambda'_\ell)$ has finitely many components. If there is only one such component, let $t_{i+1} = \ell/m$, otherwise let $t_{i+1}$ be between $t_i$ and $\ell/m$.  Let $C'$ be the component of $\pi(\Lambda'_\ell) \smallsetminus \pi(f_{t_i}(\Lambda_\ell)) \cap \pi(\Lambda'_\ell)$ closest to one end of $\pi(\Lambda'_\ell)$.  Let $C$ be the segment of $\pi(f_{t_i}(\Lambda_\ell))$ which has the same endpoints as the closure of $C'$.  Together $C'$ and $C$ form the  boundary of an embedded disk, since by assumption there are no self-loops in either.  For $t_i \leq t \leq t_{i+1}$ we let $f_t$ act on the front projections by retracting this disk onto the part of its boundary lying along $C'$.  That this can be done so that it lifts to a Legendrian isotopy follows from the assumption that there are no parallel crossings or cusps (the part of $\pi(\Lambda_\ell)$ just past the end of $C$ should also be perturbed in order to not create a corner in the front projection).  

To show the space of such isotopies is contractible it suffices to show contractiblity of the group $\Aut(\Lambda)$ of Legendrian isotopies from $\Lambda$ to itself that are stationary at the boundary and pass through Legendrians whose front projections are immersions.  To do this it suffices to describe, for any element $g_s$ of $\Aut(\Lambda)$ and any $s \in [0,1]$, an isotopy $f_t$ from $g_s(\Lambda)$ to $\Lambda$ which is smooth in $s$, is the stationary isotopy at $s = 0$, $1$, and which itself only passes through Legendrians with immersed front projections.  But this can be done using the same prescription we used to construct an isotopy from $\Lambda$ to $\Lambda'$.  It is clearly continuous in $s$, and $f_t$ limits to the stationary isotopy at $s=0$, $1$ since the process of retracting the embedded disks does not increase their size. 
\end{proof}

This guarantees the existence of alternating isotopy representatives of reduced Legendrians in $T^\infty D^2$. We have the following more general existence theorem, which is proved by reduction to \Cref{prop:Thurston} following a strategy employed in \cite{GK} for a different class of alternating strand diagrams on $T^2$. We follow the notation of \Cref{sec:microab}, including the use of $\beta_i$ for both a choice of abstract braid at $\sigma_i$ and for the associated Legendrian satellite in $T^\infty(\Sigma \smallsetminus \sigma)$.

\begin{theorem}
\label{thm:wildcharvty}
Let $\Sigma$ be a closed surface, $\sigma = \{\sigma_1,\dotsc,\sigma_k\}$ a nonempty collection of $k$ marked points, and $\sigma_i \mapsto \beta_i \in Br_n^+$ a choice of positive braid
at each marked point.  If $\Sigma$ has genus zero and $k = 1$, assume $\beta_1$ can be written as $\beta' \Delta^2$ where $\Delta$ denotes a half-twist.  Then in $T^\infty (\Sigma \smallsetminus \sigma)$ the associated Legendrian $\beta = \coprod \beta_i$ 
is Legendrian isotopic to an alternating Legendrian. 
\end{theorem}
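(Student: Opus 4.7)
The plan is to reduce the global statement to local applications of Proposition~\ref{prop:Thurston} (Thurston) via a cellular decomposition of $\Sigma$ with vertex set $\sigma$. Following the strategy indicated from \cite{GK}, one cuts $\Sigma$ into disks along arcs between marked points, so that on each face the restricted braid satellite becomes a reduced Legendrian in $T^\infty D^2$; Thurston's theorem provides alternating representatives face by face, which glue along edges.

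Concretely, first fix an ideal triangulation (or cellular decomposition) $\mathcal{T}$ of $\Sigma$ with vertex set $\sigma$. Such a triangulation exists whenever $\Sigma$ has positive genus, or $\Sigma = S^2$ and $k \geq 3$. In the sporadic case $\Sigma = S^2$, $k = 2$ I would use instead a CW decomposition with two vertices, two edges between them, and two digonal faces; Proposition~\ref{prop:Thurston} still applies to each disk. The remaining case $\Sigma = S^2$, $k = 1$ is exactly where the half-twist hypothesis enters: since $\Delta^2$ is central and geometrically represents a full twist of the cable, one can Legendrian isotope the $\Delta^2$-part across the sphere so that it becomes a satellite of a cocircle around an auxiliary second point on the opposite hemisphere. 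The assumption $\beta_1 = \beta' \Delta^2$ then reduces the problem to the $k = 2$ case with braids $\beta'$ and $\Delta^2$.

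Next, for each face $F$ of the chosen decomposition, the braid satellite restricts to a Legendrian $\Lambda_F \subset T^\infty F$ whose front projection meets $\partial F$ transversely. Because every $\beta_i$ is positive, the co-orientations induced on $\partial F$ already alternate, and the matching between incoming and outgoing boundary points is determined by how the braid generators at each vertex $\sigma_i$ are distributed among the faces incident to $\sigma_i$. After a preliminary isotopy supported near the vertices I would arrange for each $\Lambda_F$ to be reduced in the sense of Definition~\ref{def:reduced}: parallel crossings, self-loops, and strands not reaching $\partial F$ can be removed by local Reidemeister-II cancellations and by shifting braid generators between adjacent faces. With $\Lambda_F$ reduced, Proposition~\ref{prop:Thurston} yields an alternating representative $\Lambda'_F$ realizing the same boundary matching, and Proposition~\ref{prop:uniqueisotopies} guarantees that the individual isotopies $\Lambda_F \rightsquigarrow \Lambda'_F$ can be chosen stationary near $\partial F$. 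The pieces therefore glue across the edges of $\mathcal{T}$ to produce a single alternating representative of $\beta$ on $\Sigma \smallsetminus \sigma$.

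The main obstacle is the combinatorial redistribution in the penultimate step: given an arbitrary assignment $\sigma_i \mapsto \beta_i$ of positive braids, one must factor each $\beta_i$ into sub-braids, one for each face incident to $\sigma_i$, so that the restriction to every face is simultaneously reduced. This is essentially a bookkeeping problem about factoring positive braids compatibly at every vertex, but it must be carried out so that the induced matchings on all faces are jointly realizable; a subtle point is that without enough ``room,'' a single face could inherit a Legendrian with no reduced model (which is precisely what the half-twist hypothesis is preventing in the $S^2$, $k = 1$ case). Once this combinatorial redistribution is in place, Proposition~\ref{prop:Thurston} and Proposition~\ref{prop:uniqueisotopies} do the remaining geometric work.
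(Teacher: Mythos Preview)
Your approach has a genuine gap at the step where you assert that ``because every $\beta_i$ is positive, the co-orientations induced on $\partial F$ already alternate.'' This is false. Near a vertex $\sigma_i$, the front projection of $\beta_i$ consists of $n$ roughly concentric strands, all co-oriented \emph{inward} toward $\sigma_i$. An ideal arc of your triangulation from $\sigma_i$ to $\sigma_j$ crosses all $n$ strands of $\beta_i$ near one end and all $n$ strands of $\beta_j$ near the other; along that arc the boundary points come in two blocks of $n$, each block carrying a single co-orientation. Positivity of the braid governs how strands cross each other in the interior, not how their co-orientations are arranged along a transversal. So $\Lambda_F$ is not reduced in the sense of Definition~\ref{def:reduced}, and Proposition~\ref{prop:Thurston} does not apply. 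Your ``combinatorial redistribution'' paragraph treats this as bookkeeping, but redistributing braid letters among faces does nothing to change the co-orientation pattern where the arcs cut the satellite.

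The paper's proof is organized precisely to manufacture this alternation. It uses a \emph{single} $(2g+2)$-gon $P$ with all of $\sigma$ placed along one edge, rather than a triangulation with vertices at $\sigma$. It then isotopes neighboring $\beta_i$, $\beta_{i+1}$ so their front projections interleave along that edge, and --- crucially --- introduces auxiliary Legendrians $\Lambda_B$, $\Lambda_C$ whose sole purpose is to force alternation of co-orientations along $\partial P'$. These auxiliaries are then absorbed into $\beta$ by connected sums with contractible loops, which do not change the Legendrian isotopy class. Only after this surgery does one obtain a Legendrian in the disk to which Proposition~\ref{prop:Thurston} applies. The genus-zero small-$k$ cases are handled not by your reduction argument but by the explicit Constructions~\ref{con:one} and~\ref{con:two}. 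Your triangulation idea is natural, but without an analogue of the auxiliary-Legendrian-and-connect-sum step it does not produce reduced Legendrians in the faces.
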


\begin{proof}
If $\Sigma$ has genus zero and $k < 3$, this follows from \Cref{con:one,con:two} (described after the proof), so from now on we assume $k \geq 3$ in the genus zero case.  The key point in general is to cut $\Sigma$ apart into a polygon in such a way that \Cref{prop:Thurston} may be applied. While spelling this out in detail is regrettably tedious, it is ultimately an elementary construction. If $g$ is the genus of $\Sigma$, fix a $(2g+2)$-gon $P$ with a gluing map $p: P \onto \Sigma$; that is, $\Sigma$ is obtained from $P$ by gluing pairs of edges together.  The image in $\Sigma$ of the boundary $\partial P$ is an embedded graph $C$, either an interval if $g=0$ or a bouquet of circles.  We choose $C$ so that:
\begin{enumerate}
\item $\sigma \subset C$ and the $\sigma_i$ lie in order along on a single component of the smooth locus of $C$.  
\item The front projection of each $\beta_i$ intersects $C$ in $2n$ points, so that $C$ separates it into a pair of $n$-strand braids.  
\item On one side of $C$ all such braids are trivial.
\end{enumerate}

The preimage $p^{-1}(\pi(\beta))$ of the front projection of $\beta$ then consists of $k$ disjoint braids attached to $\partial P$ along each of two edges.  We call these edges $A$ and $A'$, letting $A'$ denote the edge where all the braids are trivial by item (3).

We now subdivide $P$ further into a union $P = P' \cup B_i \cup \cdots \cup B_k$ of smaller polygons with pairwise disjoint interiors.  The role of each $B_i$ will be to isolate the nontrivial part of each braid $\beta_i$.  That is, we choose them to satisfy the following:

\begin{enumerate}
\item Each $B_i$ is a quadrilateral such that $B_i$ meets $\partial P$ only along $A$, and $B_i \cap \partial P$ is an edge of $B_i$.
\item The image in $\Sigma$ of $B_i \cap \partial P$ does not meet $\pi(\beta)$, and contains $\sigma_i$ but no other points of $\sigma$.
\item The interior of $B_i$ contains all crossings in $p^{-1}(\pi(\beta_i))$, and $B_i \cap p^{-1}(\pi(\beta))$ is an $n$-strand braid diagram with all strands going from one edge of $B_i$ to its opposite (hence each of these edges shares an endpoint with $B_i \cap \partial P$).  
\end{enumerate}

\begin{figure}
\includegraphics{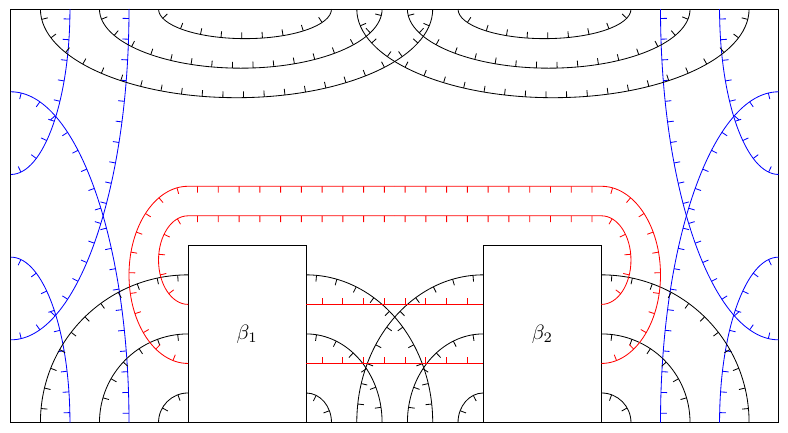}
\caption{\label{fig:existence} The proof of \Cref{thm:wildcharvty} when $\Sigma = T^2$ and $\sigma$ consists of two points. The polygon $P$ is pictured with two squares $B_1$ and $B_2$ cut out to yield the crenellated polygon $P'$. The front projection of $\beta = \beta_1 \sqcup \beta_2$ is in black, the nontrivial part of each braid lying in $B_1$, $B_2$, respectively. The front projections of the auxiliary Legendrians $\Lambda_B$ and $\Lambda_C$ are in red and blue, respectively. Taking their connect sums with $\beta$, we obtain a new Legendrian which is isotopic to $\beta$ and whose co-orientations alternate along the boundary of $P'$.}
\end{figure}

We next isotope $\beta$ so that neighboring front projections $\pi(\beta_i)$ and $\pi(\beta_{i+1})$ overlap in an alternating fashion, stretching each one out along $C$. 
That is, we require the following:
\begin{enumerate}
\item The isotopy is only nontrivial on the complement of $p(\sqcup B_i)$.
\item On the interval of $C$ lying between $p(B_i \cap \partial P)$ and $p(B_{i+1} \cap \partial P)$, the intersections of $\pi(\beta_i)$ and $\pi(\beta_{i+1})$ with $C$ alternate co-orientations ater the isotopy.
\item The number of intersections of $\pi(\beta)$ with $C$ remains constant through the isotopy.
\item After the isotopy there are $n(n-1)(k-1)$ new crossings of $\pi(\beta)$ (the minimal possible number created in order to satisfy (1)--(3)).
\end{enumerate}

In particular, $\pi(\beta_i)$ is still disjoint from $\pi(\beta_j)$ unless $|i-j| \leq 1$.  From now on $\beta$ refers to the result of this isotopy.

We now define some auxiliary Legendrians $\Lambda_B, \Lambda_C \subset T^\infty \Sigma$.  Their role will be to make sure the co-orientations of $p^{-1}(\pi(\beta \cup \Lambda_B \cup \Lambda_C))$ at its intersections with $\partial P'$ are alternating, setting up the application of \Cref{prop:Thurston}.  The front projection of each component of $\Lambda_B$, $\Lambda_C$ will be an embedded loop, those of $\Lambda_C$ encircling the vertex or vertices of $C$ and those of $\Lambda_B$ passing through each of the $B_i$ but not meeting $p(\partial P)$. 

We choose $\Lambda_B$ so that the following properties hold:

\begin{enumerate}
\item $\pi(\Lambda_B)$ consists of $n-1$ pairwise disjoint embedded loops, and is disjoint from $C$.
\item  The intersection of each component of $p^{-1}(\pi(\Lambda_B))$ with $\partial P'$ consists of $2k$ points, one on each edge of the $B_i$ that also intersects $p^{-1}(\pi(\beta))$.
\item The co-orientations of $p^{-1}(\pi(\Lambda \cup \Lambda_B))$ at its intersections with $\partial B_i$ alternate along $\partial B_i$.
\item Inside a given $B_i$ the number of intersections of $p^{-1}(\pi(\Lambda_B))$ with $p^{-1}(\pi(\beta))$ is twice the number of crossings of $p^{-1}(\pi(\beta))$ in $B_i$ (the minimum possible number), and the intersection of  $p^{-1}(\pi(\Lambda \cup \Lambda_B))$ with $B_i$ is an alternating strand diagram.
\end{enumerate}

We choose $\Lambda_C$ so that the following properties hold:

\begin{enumerate}
\item If $g >0$, $\pi(\Lambda_C)$ consists of $n-1$ pairwise disjoint embedded loops lying in a contractible set containing the vertex of $C$, each isotopic to a small loop around the vertex of $C$ and intersecting each component of the smooth part of $C$ exactly twice.  If $g=0$ and $C$ is an embedded interval, $\pi(\Lambda_C)$ consists two sets of $n-1$ pairwise disjoint embedded loops, each set surrounding either end of the interval, and each loop intersecting the interval once.
\item $\pi(\Lambda_C)$ is disjoint from $p(\sqcup B_i)$
\item The co-orientations of $p^{-1}(\pi(\Lambda \cup \Lambda_B \cup \Lambda_C))$ at its intersections with $\partial P'$ alternate along $\partial P'$.
\item $\pi(\Lambda_C)$ is disjoint from $\pi(\Lambda_B)$, and the strands of $\pi(\Lambda_C)$ intersect each other and the strands of $\pi(\beta)$ the minimal number of times such that the above properties hold.
\end{enumerate}

In particular, though $\pi(\Lambda_B)$ and $\pi(\Lambda_C)$ intersect $\pi(\beta)$, $\Lambda_B$ and $\Lambda_C$ can be Legendrian isotoped through $T^\infty(\Sigma \smallsetminus \sigma) \smallsetminus \beta$ so that their front projections are disjoint from $\pi(\beta)$. 

The setup so far is illustrated in \Cref{fig:existence}.
We have arranged so that $p^{-1}(\pi(\beta \cup \Lambda_B \cup \Lambda_C)) \cap P'$ has alternating co-orientations along $\partial P'$ and no self-loops or parallel bigons, hence we may apply \Cref{prop:Thurston} to obtain an alternating Legendrian in $T^\infty P'$.  What we really want, however, is an alternating representative of $p^{-1}(\pi(\beta))$.  Thus the final step is to take connected sums between certain components of $\beta$ and $\Lambda_B$, $\Lambda_C$.  This will yield a new Legendrian $\beta' \subset T^\infty\Sigma$ which is Legendrian isotopic to $\beta$ but retains the desirable combinatorial properties of $\beta \cup \Lambda_B \cup \Lambda_C$.

First we consider the case where $\Sigma$ has positive genus.  The preimage $p^{-1}(\pi(\Lambda_C)) \cap P'$ is a union of embedded intervals, $n-1$ surrounding each corner of $P$. Each $p^{-1}(\pi(\beta_i)) \cap P'$ consists of three sets of $n$ parallel embedded intervals, of which one set has both endpoints on $A'$.  We form connected sums between the outer $n-1$ strands of $p^{-1}(\pi(\beta_1)) \cap P'$ having both endpoints on $A'$ and the strands of $p^{-1}(\pi(\Lambda_C)) \cap P'$ surrounding the corner of $A$ neareast to $\sigma_k$.  That is, we choose a path $\gamma$ in $P'$ between the outermost strands of each set, which only meets $p^{-1}(\pi(\beta \cup \Lambda_B \cup \Lambda_C))$ at the endpoints of $\gamma$.  We now cut both outermost strands at the endpoints of $\gamma$, reattaching them to each other by following $\gamma$ across $P'$.  We repeat this for the remaining $n-2$ strands, never increasing the total number of crossings.  The result is to replace $p^{-1}(\pi(\beta \cup \Lambda_B \cup \Lambda_C)) \cap P'$ with a new collection of immersed co-oriented curves with the same set of crossings but $n-1$ fewer smooth embedded components.

Next we perform a similar procedure with $\Lambda_B$.  The components of $p^{-1}(\pi(\Lambda_B)) \cap P'$ we use in the connected sum are those closest to the middle of $P$; that is, those connecting the edge of $B_1$ closest to one end of $A$ to the edge of $B_k$ closest to the other end of $A$.  We connect these to the strands of $p^{-1}(\pi(\Lambda_C)) \cap P'$ surrounding the corner of $A$ nearest to $\sigma_1$.  

Call $\beta' \subset T^\infty \Sigma$ the Legendrian lift of image of the resulting surgered front projection.  It follows from the construction that $\beta'$ is Legendrian isotopic to $\beta$: beforehand we could isotope each component of $\Lambda_B$, $\Lambda_C$ so that its front projection is an embedded loop disjoint from the front projection of $\beta$.  We could equivalently describe $\beta'$ by doing this isotopy, then taking a connected sum, then isotoping back, and a connected sum of a Legendrian with one whose front projection is a circle does not change its Legendrian isotopy class.  

On the other hand, the connected sum we performed in $P'$ did not create self-loops or parallel bigons (the step involving $\Lambda_B$ harmlessly creates $n-1$ antiparallel bigons, as does the step involving $\Lambda_C$ if $k=1$).  By construction the strands of $p^{-1}(\pi(\beta')) \cap P'$ have alternating co-orientations along the boundary of $P'$, hence we can apply \Cref{prop:Thurston} to find an alternating Legendrian in $T^\infty P'$ which is isotopic to the lift of $p^{-1}(\pi(\beta')) \cap P'$.  But $\beta'$ was already alternating above the image of each $B_i$, so we obtain an alternating Legendrian isotopy representative of $\beta'$, hence $\beta$, in $T^\infty \Sigma$.  Note that since we apply \Cref{prop:Thurston} in $P'$, and $\sigma$ is disjoint from the image of the interior of $P'$, the resulting isotopy from $\beta$ to an alternating representative takes place in $T^\infty( \Sigma \smallsetminus \sigma)$.  

In the genus zero case, the strategy is the same though we have to specify where to take connected sums differently.  We have assumed $k > 2$, so for any choice of $1 < i < k$ the $n$ components of $p^{-1}\pi(\beta_i) \cap P'$ whose endpoints lie on $A'$ do not intersect $p^{-1}(\pi(\Lambda_C))$. As before, we take a connected sum with the $n-1$ outermost of these components with those of $p^{-1}\pi(\Lambda_C)$ surrounding one corner of $P'$.  We have to now separately take a connected sum of the same components of $p^{-1}(\pi(\beta_i)) \cap P'$ with those of $p^{-1}(\pi(\Lambda_C))$ surrounding the other corner of $P'$ (since in the genus zero case $\Lambda_C$ has $2(n-1)$ components).  Finally, we take a connected sum of the same outermost $n-1$ components of $p^{-1}\pi(\beta_i) \cap P'$ with the components of $p^{-1}(\pi(\Lambda_B)) \cap P'$ closest to the middle of $P'$.  Again the resulting collection of immersed co-oriented curves has no self-loops or parallel bigons, so we may apply \Cref{prop:Thurston} as above (if we took $i=1$ or $i=k$ the above prescription would result in self-loops, so we have indeed used the assumption that $k > 2$). 
\end{proof}

\begin{figure}
\centering
\begin{tikzpicture}
\newcommand*{\ydist}{.9}; \newcommand*{\xdist}{.75};
\newcommand*{\sdist}{.5};\newcommand*{\hdist}{.4};
\node (l) [matrix] at (0,0) {
\foreach \x in {0,...,8} \foreach \y in {0,...,4} {\coordinate (\x\y) at (\x*\xdist,\y*\ydist);}
\foreach \c/\d in {00/80,02/82,04/84,22/24,40/42,62/64} {\draw[graphstyle] (\c) to (\d);}
\foreach \c in {00,02,04,80,82,84,22,40,62} {\fill (\c) circle (\bvertrad); \fill[white] (\c) circle (\wvertrad);}
\foreach \c in {24,42,64} {\fill (\c) circle (\bvertrad);}
\draw[asdstyle,lefthairs] ($(44)+(0,\sdist)$) to ($(24)+(0,\sdist)$) to[out=180,in=0] ($(04)-(0,\sdist)$) arc[start angle=-90,delta angle=-180,radius=\sdist] to[out=0,in=135] (14) to (41) to[out=-45,in=180] ($(50)+(0,\sdist)$) to ($(80)+(0,\sdist)$) arc[start angle=90, delta angle=-180,radius=\sdist] to ($(40)-(0,\sdist)$);
\draw[asdstyle,righthairs] ($(44)+(0,\sdist)$) to ($(64)+(0,\sdist)$) to[out=0,in=180-0] ($(84)-(0,\sdist)$) arc[start angle=-90,delta angle=180,radius=\sdist] to[out=180-0,in=180-135] (74) to (41) to[out=180+45,in=180-180] ($(30)+(0,\sdist)$) to ($(00)+(0,\sdist)$) arc[start angle=90, delta angle=180,radius=\sdist] to ($(40)-(0,\sdist)$);
\draw[asdstyle,righthairsnogap] ($(44)-(0,\sdist)$) to ($(34)-(0,\sdist)$) to[out=180,in=45] (23) to[out=225,in=0] ($(12)+(0,\sdist)$) to ($(02)+(0,\sdist)$) arc[start angle=90, delta angle=180,radius=\sdist] to ($(22)-(0,\sdist)$) to[out=0,in=180] ($(42)+(0,\sdist)$);
\draw[asdstyle,lefthairsnogap] ($(44)-(0,\sdist)$) to ($(54)-(0,\sdist)$) to[out=180-180,in=180-45] (63) to[out=180-225,in=180-0] ($(72)+(0,\sdist)$) to ($(82)+(0,\sdist)$) arc[start angle=90, delta angle=-180,radius=\sdist] to ($(62)-(0,\sdist)$) to[out=180-0,in=180-180] ($(42)+(0,\sdist)$);\\};

\node (r) [matrix] at (9,0) {
\newcommand*{\irad}{1.2}; \newcommand*{\mrad}{2.2}; \newcommand*{\morad}{2.5}; \newcommand*{\orad}{3.2};
\draw[asdstyle,righthairsnogap] (90:\morad) to[out=180,in=150-90] (150:\mrad) to[out=240,in=120] (210:\morad) to[out=300,in=180] (-90:\mrad);
\draw[asdstyle,lefthairsnogap] (90:\morad) to[out=180-180,in=180-150+90] (180-150:\mrad) to[out=180-240,in=180-120] (180-210:\morad) to[out=180-300,in=180-180] (180+90:\mrad);
\draw[asdstyle,lefthairsnogap] (-90:\irad) to[out=180,in=150+90] (150:\orad) to[out=150-90,in=120] (30:\irad) to[out=-60,in=0] (-90:\orad);
\draw[asdstyle,righthairsnogap] (-90:\irad) to[out=180-180,in=180-150-90] (180-150:\orad) to[out=180-150+90,in=180-120] (180-30:\irad) to[out=180+60,in=180-0] (-90:\orad);\\};
\coordinate (rw) at ($(r.west)+(5mm,0)$); \coordinate (le) at ($(l.east)+(1mm,0)$);
\draw[arrowstyle] (le) -- (rw);
\draw[arrhdstyle] ($(rw)+(-1.2mm,1.2mm)$) -- (rw); \draw[arrhdstyle] ($(rw)+(-1.2mm,-1.2mm)$) -- (rw);
\end{tikzpicture}
\caption{\Cref{con:one} associates the bicolored graph on the left to the word $s_2 s_1 s_2$ for $\beta' = \Delta$. This produces an alternating representative of $\beta = \Delta^3$ on the right.}
\label{fig:conone}
\end{figure}
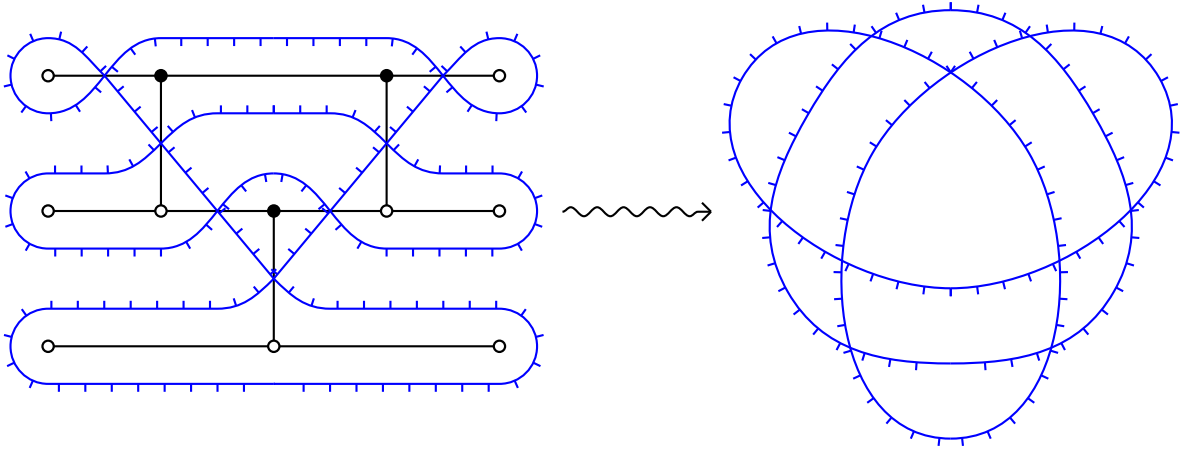

We now describe several constructions of alternating representatives for special cases of the Legendrian satellites appearing in \Cref{thm:wildcharvty}. The discussion largely amounts to reinterpreting well-known constructions in combinatorics into the language of Legendrian knot theory. We write words for the annular braids $\beta_i$ in letters $s_1,\dotsc, s_{n-1}$, with the convention that $s_1$ is a crossing of the strands furthest from $\sigma_i$.  We use $\Delta$ to denote the positive half-twist.  The first two constructions are Legendrian reinterpretations of the notion of double wiring diagram introduced in \cite{FZ1} and considered for general braids in \cite{FG4}:

\begin{construction}\label{con:one}
Let $\Sigma = S^2$, $\sigma = \{\infty\}$, and $\beta$ a positive annular braid on $n$ strands of the form $\beta' \Delta^2$.  A word $\beta' = s_{i_1}\cdots s_{i_k}$ determines an alternating isotopy representative of $\beta$ as follows.  Begin with a bicolored graph in the plane consisting of $n$ horizontal line segments running from $(0,i)$ to $(k+1,i)$ for $1 \leq i \leq n$, with white vertices at both ends. For $1 \leq j \leq k$ adjoin a vertical segment along the line $y=j$ connecting the line $x = i_j$ to the line $x = 1+i_j$, with a black vertex at its top and a white vertex at its bottom.  From the resulting alternating strand diagram, one obtains the front projection of $\beta$ by sliding all upward co-oriented strands to the top of the picture and all downward co-oriented strands to the bottom; see \Cref{fig:conone}.
\end{construction}

The above assumption that $\beta$ be of the form $\beta' \Delta^2$ is present for good reason.  For example, if $\beta$ has no crossings at all then $\cM_1(S^2,\beta,\infty)$ is a single point whose stabilizer is a Borel subgroup of $GL_n$.  But if $\beta$ had any smooth exact fillings at all, let alone one arising from an alternating representative, the moduli space would necessarily have a point with an abelian stabilizer.  We do not know whether the requirement $\beta = \beta' \Delta^2$ is a necessary condition for $\beta$ to have alternating representatives. 

We note that when $s_{i_1}\cdots s_{i_k}$ is a reduced word for an element $w$ of the symmetric group, a suitably-framed moduli space $\cM^{fr}_{\:1}(S^2,\beta,\infty)$ recovers the double Bruhat cell $B_+ \cap B_- w B_-$ \cite{FZ1} as well as a certain positroid stratum of $\Gr(n,2n)$.

\begin{construction}\label{con:two}
Let $\Sigma = S^2$, $\sigma = \{0, \infty\}$, and $\beta_0$, $\beta_\infty$ any positive annular braids at $0$ and $\infty$.  A double word for $(\beta_0,\beta_\infty)$ is a shuffle of words for $\beta_0$ and $\beta_\infty$ \cite{FZ}.  We encode a double word as a sequence $(s_{i_1},\dotsc,s_{i_k})$ and a function $\tau: \{1,\dotsc,k\} \to \{0,\infty\}$ such that the ordered product $\prod_{\tau(j) = \ell} s_j$ is a word for $\beta_\ell$.  A double word determines an alternating isotopy representative as follows.  Begin with a bicolored graph in the punctured plane consisting of $n$ concentric circles centered at the origin, with white vertices where each intersects the positive $x$-axis.  We adjoin a radial line segment of phase $j2\pi i/(k+1)$ for each $1 \leq j \leq k$. 
This segment connects the $i_j$th and $(i_j+1)$th circles closest to $\tau(j)$, and has white/black vertices at its farthest/closest endpoint to $\tau(j)$, respectively.  From the resulting alternating strand diagram, one obtains the front projection of $\beta$ by sliding the strands co-oriented towards $0$, $\infty$ past each other towards $0$, $\infty$, respectively.
\end{construction}

These two constructions cover the remaining cases of Theorem \ref{thm:wildcharvty}.

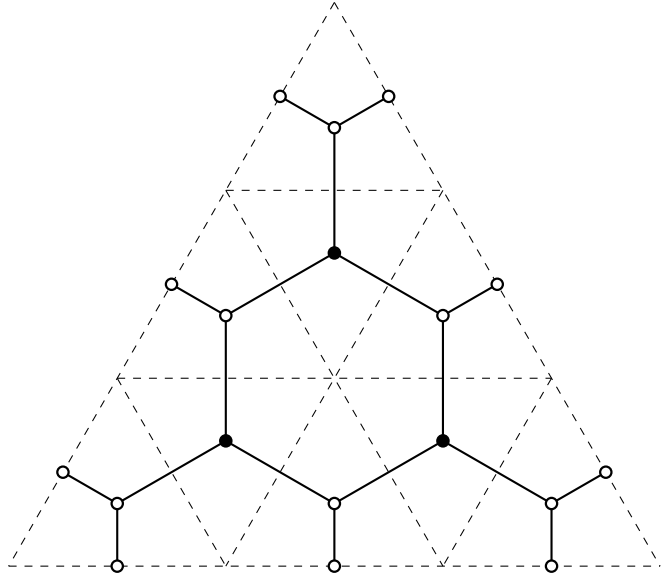
\begin{figure}
\centering
\begin{tikzpicture}
\newcommand*{\rad}{5};
\foreach \c/\angle in {300/90,030/210,003/-30} {\coordinate (\c) at (\angle:\rad);}
\foreach \c/\d/\e in {210/300/030,120/030/300,201/300/003,102/003/300,021/030/003,012/003/030} {\coordinate (\c) at ($(\d)!1/3!(\e)$);}
\coordinate (111) at (0,0);
\foreach \n/\c/\d/\e in {200/300/210/201,020/030/120/021,002/003/102/012,110/210/120/111,101/201/102/111,011/012/021/111} {\coordinate (w\n) at ($1/3*(\c)+1/3*(\d)+1/3*(\e)$);}
\foreach \n/\c/\d/\e in {100/210/201/111,010/120/021/111,001/102/012/111} {\coordinate (b\n) at ($1/3*(\c)+1/3*(\d)+1/3*(\e)$);}
\foreach \n/\c/\d in {310/300/210,220/210/120,130/120/030,301/300/201,202/201/102,103/102/003,031/030/021,022/021/012,013/012/003} {\coordinate (w\n) at ($(\c)!1/2!(\d)$);}
\foreach \c/\d in {300/030,201/021,102/012,300/003,210/012,120/021,030/003,120/102,210/201} {\draw[dashed] (\c) to (\d);}
\foreach \c/\d in {b100/w200,b100/w101,b100/w110,b010/w020,b010/w110,b010/w011,b001/w002,b001/w101,b001/w011,w200/w310,w200/w301,w020/w130,w020/w031,w002/w103,w002/w013,w110/w220,w101/w202,w011/w022} {\draw[graphstyle] (\c) to (\d);}
\foreach \c in {w310,w220,w130,w301,w202,w103,w031,w022,w013,w200,w020,w002,w110,w101,w011} {\fill (\c) circle (\bvertrad); \fill[white] (\c) circle (\wvertrad);}
\foreach \c in {b100,b010,b001} {\fill (\c) circle (\bvertrad);}
\end{tikzpicture}
\caption{The graph $\Gamma_3$ of \Cref{con:triangle}.  If we regard it as a freestanding graph in $\R^2$, rather than attaching it to other copies of itself following some ideal triangulation, it defines an alternating Legendrian isotopic to the Legendrian of \Cref{fig:conone}.}
\label{fig:triangle}
\end{figure}

The combinatorics essential to the next construction is due to \cite{FG1}. The cluster algebras associated to these examples when $n=2$ are often just called cluster algebras from surfaces \cite{FST}. In the literature one often starts with the surface $\Sigma'$ with marked boundary appearing in the construction, but from our point of view this is simply a convenient way of encoding the number of half-twists.

\begin{construction}\label{con:triangle}
Let $\Sigma$, $\sigma$ be arbitrary and each $\beta_i$ of the form $\Delta^{k_i}$ for some $k_i \in \N$.  Alternating representatives of $\beta \subset T^\infty \Sigma$ can be constructed using triangulations.  First cut out a disk $D_i$ around each $\sigma_i$ with $k_i > 0$ and let $\Sigma' \subset \Sigma$ denote the resulting surface with boundary. For each such $\sigma_i$ we also mark $k_i$ points points on the component of $\partial \Sigma'$ that surrounds it.  We say an ideal triangulation of $\Sigma'$ is a triangulation such that all triangles have vertices either on marked points of boundary components or on points $\sigma_i$ for which $k_i = 0$.

There is a standard bicolored graph $\Gamma_n$ we can embed in any triangle \cite{FG1}. This is dual to a triangulation of the given triangle into $n^2$ smaller triangles as in \Cref{fig:triangle}. If we are given an equilateral triangle in $\R^2$, we cut it by $n-1$ equally-spaced lines parallel to each of its three sides.  We label the triangles of the resulting triangulation as white or black so that every triangle on the boundary of the original one is white, and no triangles of the same color share an edge. The graph $\Gamma_n$ has a black/white vertex in the center of each black/white triangle.  It also has $n$ white vertices along each edge of the original triangle, one in the middle of each outward-facing edge of a white triangle. There is an edge between any black vertex and each of its three white neighbors, as well as between each white vertex on the boundary and the white vertex in the center of the white triangle whose boundary it lies on.

We associate a bicolored graph $\Gamma \subset \Sigma$ to an ideal triangulation of $\Sigma'$ by embedding $\Gamma_n$ into each triangle.  This is done so that the white vertices on the boundaries of adjacent triangles coincide.  We can choose the alterating Legendrian of $\Gamma$ so that the $\sigma_i$ lie in null regions of its front projections. After isotoping this Legendrian to its standard (i.e. positroid) form in each triangle separately, one easily sees it is Legendrian isotopic to $\beta$ inside $T^\infty(\Sigma \smallsetminus \sigma)$.
\end{construction}
\subsection{The square move}
\label{sec:squaremove}

There is a local operation on quadrilateral faces of bicolored graphs --- the so-called
{\em square move} --- that induces isotopies between alternating 
Legendrians.  The corresponding conjugate Lagrangians each determine an abelian chart on the moduli space; we will show here
that these charts are related by a cluster $\cX$-transformation.  

Geometrically, the associated conjugate Lagrangians differ by a certain Lagrangian
surgery (see  \Cref{fig:surgery}); 
one could imagine using this fact directly to compare,  Floer theoretically, the
spaces of local systems supported on each.  Instead, we use the results of \Cref{sec:altsh}, which capture all the relevant Floer theoretic data in the categories of 
alternating sheaves associated to the two alternating Legendrians.  The comparison between
these categories is computed using local calculations of the \cite{GKS} equivalence.  

The local model for the square move is the Legendrian isotopy $\Lambda \to \Lambda'$ pictured in \Cref{fig:squaremove}. 
Let $\cF \in \Sh_\Lambda(D^2)$ be an alternating sheaf and $N$, $W$, $S$, $E$ its nonzero stalks near the boundary of the picture.
We will compute the image of $\cF$ under $\Sh_{\Lambda} \congto \Sh_{\Lambda'}$ in terms of the positive face coordinates of \Cref{def:coords}. 
Here we must consider these not just for closed faces of the graph, but also for the four regions on the boundary. The associated coordinates are more properly isomorphisms
\[
X_{NE}: N \congto E, \quad X_{ES}: E \congto S, \quad X_{SW}: S \congto W, \quad X_{WN}: W \congto N,
\]
which together with the positive coordinate of the middle region satisfy
\[ X_M = -(X_{NE}X_{ES}X_{SW} X_{WN})^{-1}. \]
For example, $X_{NE}$ is explicitly the composition of
\begin{enumerate}
\item the isomorphism of $N$ with a microstalk of $\cF$ on the component of $\Lambda$ passing immediately below the Northern region
\item parallel transport in $\cF_\Lambda$ to a microstalk of $\cF$ at the far right of the picture
\item the isomorphism between this microstalk and $E$.
\end{enumerate}
The remaining isomorphisms $X_{ES}$, $X_{SW}$, $X_{WN}$ can be described symmetrically. Note that the existence of the isomorphisms in (1) and (3) depends on the vanishing of $\cF$ in the middle and Northeast null regions. Given an alternating sheaf microsupported on $\Lambda'$, we similarly denote its positive coordinates by $Y_{NE}$, $Y_{ES}$, $Y_{SW}$, $Y_{WN}$, and $Y_M$.

\begin{proposition}\label{prop:localclustertrans}
Let $\Lambda, \Lambda' \subset T^\infty D^2$ be the alternating Legendrians related by the square move of \Cref{fig:squaremove}.  Let $\cF \in \Sh_\Lambda(D^2)$ be an alternating sheaf and $\cF' \in \Sh_{\Lambda'}(D^2)$ its image under the isotopy equivalence $\Sh_{\Lambda} \congto \Sh_{\Lambda'}$.  Then $\cF$ is an alternating sheaf if and only if  $X_M \neq -1$, and its positive face coordinates are related to those of $\cF$ by
\begin{gather*}
X_{NE} = Y_{NE}(1+Y_M), \quad X_{ES} = Y_{ES}(1+Y_M^{-1})^{-1}, \quad X_{SW} = Y_{SW}(1+Y_M),\\ X_{WN} = Y_{WN}(1+Y_M^{-1})^{-1}, \quad X_M = Y_M^{-1}.
\end{gather*}
\end{proposition}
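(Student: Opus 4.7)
The plan is to reduce the statement to an explicit local computation. The isotopy realizing the square move is supported in a small neighborhood of the square face, so outside this neighborhood $\cF$ and $\cF'$ are canonically identified, and the four exterior face coordinates are computed via the same sequences of microstalks and parallel transports. Within the picture, by the crossing condition of \Cref{ex:crossing}, an alternating sheaf is determined by five stalks together with four structure maps across the crossings, each required to be an isomorphism of the two relevant stalks. Fixing trivializations of all microstalks of $\cF$ and $\cF'$, I can record each of these structure maps as an element of $\coeffs^\times$, and the face coordinates $X_{NE}, X_{ES}, X_{SW}, X_{WN}, X_M$ and their counterparts $Y_\ast$ for $\cF'$ are (up to the sign convention of \Cref{def:coords}) the products of these scalars around each face.

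Next I would factor the isotopy. The square-move isotopy is homotopic, rel the boundary of the disk, to a composition of Legendrian Reidemeister II and III moves: one can slide each pair of opposite strands of the original square past one another through R-III moves at the four corners, interspersed with an R-II pinching the middle face away and an R-II re-expanding it in the rotated configuration. Each elementary move induces an explicit equivalence of local sheaf categories via \Cref{prop:reid}, namely a reflection functor in the quiver presentations of \Cref{fig:reid,fig:rmove}. By \Cref{lem:R3} the parallel transport in the microlocalization along each strand is preserved by each R-III, so I can compose these equivalences and track the induced changes of trivialization at each face. The result is an explicit formula for the four new structure maps of $\cF'$ in terms of those of $\cF$ and the scalar $X_M$. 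Reading off the products around each face yields the stated transformation law; the factors $1+Y_M$ and $1+Y_M^{-1}$ appear as the invertibility conditions --- the determinants of the local $2\times 2$ matrices formed by the structure maps meeting at each corner of the square after the reflection --- which must hold in order for the four new composed structure maps to be isomorphisms. That is, $\cF'$ is alternating precisely when these determinants do not vanish, and a short calculation identifies this locus as $X_M \neq -1$. Away from it, the image of $\cF$ under $\Sh_\Lambda(D^2) \congto \Sh_{\Lambda'}(D^2)$ is still microsupported in $\Lambda'$ but acquires a nontrivial stalk on the new middle null region, obstructing it from being alternating.

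The main obstacle will be keeping the sign conventions consistent. The positive face coordinates of \Cref{def:coords} differ by signs from the naive products of structure maps, and these signs are determined by a choice of clockwise component of $\Lambda$ at each crossing; but the square move rotates the configuration by $\pi/2$, exchanging the ``clockwise-adjacent'' and ``counterclockwise-adjacent'' strands at each of the four corners. Verifying that the signs introduced by tracking microlocalizations through the reflection functors precisely cancel the sign changes from the rotation --- so that one obtains the subtraction-free formulas above, rather than an equivalent but sign-twisted variant --- is the most delicate part of the bookkeeping. Once completed, the resulting transformation is manifestly the cluster $\cX$-mutation at the vertex of the dual quiver associated to the middle face, with exchange matrix read off from the local picture, as required for subsequent application in \Cref{thm:clustertrans}.
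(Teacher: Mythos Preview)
Your strategy is sound in outline---factoring the isotopy into Reidemeister moves and tracking the sheaf data through reflection functors will work---but it differs from the paper's route in one structural respect that makes the computation much cleaner. Rather than composing the full chain of R-II/R-III moves from $\Lambda$ all the way to $\Lambda'$, the paper stops at the intermediate Legendrian $\Lambda_\#$ shown in the middle frame of \Cref{fig:squaremove}. At $\Lambda_\#$ the sheaf $\cF_\#$ is simply four lines $N,E,S,W$ mapping into a rank-two stalk $V$; after trivializing, this is a $2\times 4$ matrix. The paper then uses \Cref{lem:R3} and the R-II observation only to identify each $X_{\ast\ast}$ (and, by the $\pi/2$-rotational symmetry, each $Y_{\ast\ast}$) with a ratio of $2\times 2$ minors of this single matrix. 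The transformation formulas then drop out of the three-term Pl\"ucker relation $\Delta_{SN}\Delta_{EW} = \Delta_{SE}\Delta_{NW} + \Delta_{SW}\Delta_{EN}$, and the condition for $\cF'$ to be alternating is the nonvanishing of the appropriate ``nonconsecutive'' minor.

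Your approach would instead push the sheaf through to $\Lambda'$ directly and compare face products, which amounts to rediscovering the Pl\"ucker relation by hand inside a longer composition of reflection functors. That is doable but labor-intensive, and your description of the data as ``five stalks and four structure maps'' undersells the local complexity (there are several black/white/null regions in the square picture). The paper's halfway-point trick buys you both economy and the conceptual explanation for why a cluster $\cX$-mutation appears: it is literally the Pl\"ucker identity for the $2\times 4$ matrix at $\Lambda_\#$.
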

\begin{proof}
Denote by $\cF_\#$ the image of $\cF$ under $\Sh_{\Lambda} \congto \Sh_{\Lambda_\#}$, where $\Lambda_\#$ is as in the middle of \Cref{fig:squaremove}.  $\cF_\#$ is determined by the data of a generic stalk $V$ in the middle region, along with the four generization maps from $S$, $W$, $N$, and $E$.

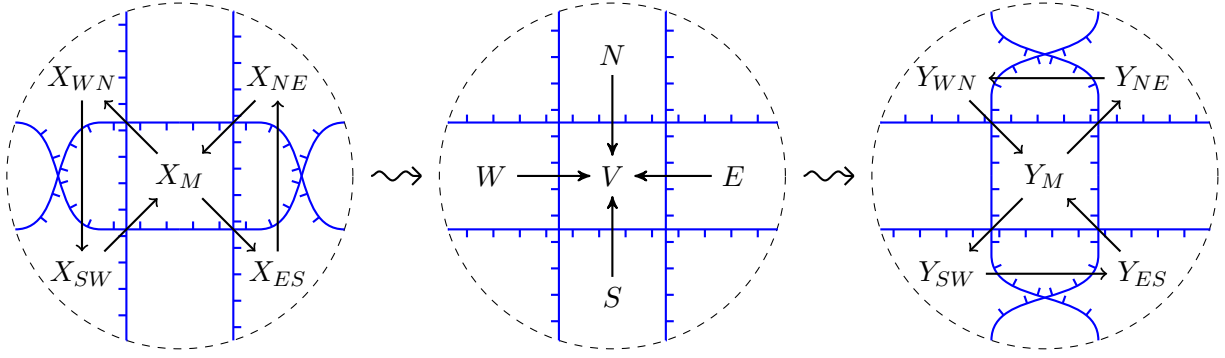
\begin{figure}
\begin{tikzpicture}
\newcommand*{\off}{18}; \newcommand*{\rad}{2.3}; \newcommand*{\vrad}{1.0};
\newcommand*{\stalkdist}{.7}; \newcommand*{\qvertdist}{.8}; \newcommand{\qoff}{45};
\node (l) [matrix] at (0,0) {
\coordinate (bl) at (180:\rad/2); \coordinate (br) at (0:\rad/2);
\coordinate (wt) at (90:\rad*.8); \coordinate (wb) at (-90:\rad*.8);
\coordinate (wr) at (0:\rad*.9); \coordinate (wl) at (180:\rad*.9);
\coordinate (ur) at ($(-\off:\rad)!\ifac!(-180+\off:\rad)$); \coordinate (lr) at ($(\off-180:\rad)!\ifac!(-\off:\rad)$);
\coordinate (ul) at ($(\off:\rad)!\ifac!(-180-\off:\rad)$); \coordinate (ll) at ($(-\off-180:\rad)!\ifac!(\off:\rad)$);
\coordinate (mr) at ($(ur)!.5!(lr)$); \coordinate (ml) at ($(ul)!.5!(ll)$);
\draw[dashed] (0,0) circle (\rad);
\draw[asdstyle,righthairs] (90+\off:\rad) -- (-90-\off:\rad);
\draw[asdstyle,righthairs] (-90+\off:\rad) -- (90-\off:\rad);
\draw[asdstyle,lefthairs] (mr) to (ur) [out=0,in=180] to (\off:\rad);
\draw[asdstyle,righthairs] (mr) to (lr) to[in=0,out=180] (180-\off:\rad);
\draw[asdstyle,righthairs] (ml) to (ul) [out=0,in=180] to (-\off:\rad);
\draw[asdstyle,lefthairs] (ml) to (ll) to[in=0,out=180] (180+\off:\rad);
\node (ne) at (90-\qoff:\qvertdist*\rad) {$X_{NE}$}; \node (wn) at (90+\qoff:\qvertdist*\rad) {$X_{WN}$};
\node (es) at (-90+\qoff:\qvertdist*\rad) {$X_{ES}$}; \node (sw) at (-90-\qoff:\qvertdist*\rad) {$X_{SW}$};
\node (m) at (0,0) {$X_{M}$};
\foreach \c/\d in {ne/m,sw/m,m/wn,m/es,wn/sw,es/ne} {\draw[dualquiverstyle] (\c) to (\d);}\\};

\node (mid) [matrix] at (2.5*\rad,0) {
\coordinate (ln) at ($(\off:\rad)!.5!(180-\off:\rad)$);
\coordinate (ls) at ($(-\off:\rad)!.5!(180+\off:\rad)$);
\coordinate (le) at ($(-90+\off:\rad)!.5!(90-\off:\rad)$);
\coordinate (lw) at ($(-90-\off:\rad)!.5!(90+\off:\rad)$);
\draw[dashed] (0,0) circle (\rad);
\draw[asdstyle,lefthairs] (lw) to[out=90,in=-90] (90+\off:\rad);
\draw[asdstyle,righthairs] (lw) to[out=-90,in=90]  (-90-\off:\rad);
\draw[asdstyle,righthairs] (le) to[out=90,in=-90]  (90-\off:\rad);
\draw[asdstyle,lefthairs] (le) to[out=-90,in=90]  (-90+\off:\rad);
\draw[asdstyle,lefthairs] (ln) to[out=0,in=180] (\off:\rad);
\draw[asdstyle,righthairs] (ln) to[out=180,in=0] (180-\off:\rad);
\draw[asdstyle,righthairs] (ls) to[out=0,in=180] (-\off:\rad);
\draw[asdstyle,lefthairs] (ls) to[out=180,in=0] (180+\off:\rad);
\node (n) at (90:\stalkdist*\rad) {$N$}; \node (s) at (-90:\stalkdist*\rad) {$S$};
\node (e) at (0:\stalkdist*\rad) {$E$}; \node (w) at (180:\stalkdist*\rad) {$W$};
\node (v) at (0,0) {$V$};
\foreach \c in {n,s,e,w} {\draw[genmapstyle] (\c) to (v);}\\};

\node (r) [matrix] at (5*\rad,0) {
\coordinate (bl) at (180+90:\rad/2); \coordinate (br) at (0+90:\rad/2);
\coordinate (wt) at (90+90:\rad*.8); \coordinate (wb) at (-90+90:\rad*.8);
\coordinate (wr) at (0+90:\rad*.9); \coordinate (wl) at (180+90:\rad*.9);
\coordinate (ur) at ($(-\off+90:\rad)!\ifac!(-90+\off:\rad)$); \coordinate (lr) at ($(\off-90:\rad)!\ifac!(90-\off:\rad)$);
\coordinate (ul) at ($(\off+90:\rad)!\ifac!(-90-\off:\rad)$); \coordinate (ll) at ($(-\off-90:\rad)!\ifac!(90+\off:\rad)$);
\coordinate (mr) at ($(ur)!.5!(lr)$); \coordinate (ml) at ($(ul)!.5!(ll)$);
\draw[dashed] (0,0) circle (\rad);
\draw[asdstyle,righthairs] (90+90+\off:\rad) -- (-90+90-\off:\rad);
\draw[asdstyle,righthairs] (-90+90+\off:\rad) -- (90+90-\off:\rad);
\draw[asdstyle,lefthairs] (mr) to (ur) [out=0+90,in=180+90] to (\off+90:\rad);
\draw[asdstyle,righthairs] (mr) to (lr) to[in=0+90,out=180+90] (180+90-\off:\rad);
\draw[asdstyle,righthairs] (ml) to (ul) [out=0+90,in=180+90] to (-\off+90:\rad);
\draw[asdstyle,lefthairs] (ml) to (ll) to[in=0+90,out=180+90] (180+90+\off:\rad);
\node (ne) at (90+90-\qoff:\qvertdist*\rad) {$Y_{WN}$}; \node (wn) at (90+90+\qoff:\qvertdist*\rad) {$Y_{SW}$};
\node (es) at (-90+90+\qoff:\qvertdist*\rad) {$Y_{NE}$}; \node (sw) at (-90+90-\qoff:\qvertdist*\rad) {$Y_{ES}$};
\node (m) at (0,0) {$Y_{M}$};
\foreach \c/\d in {ne/m,sw/m,m/wn,m/es,wn/sw,es/ne} {\draw[dualquiverstyle] (\c) to (\d);}\\};

\coordinate (rw) at ($(r.west)+(-1mm,0)$); \coordinate (me) at ($(mid.east)+(1mm,0)$);
\coordinate (mw) at ($(mid.west)+(-1mm,0)$); \coordinate (le) at ($(l.east)+(1mm,0)$);
\foreach \ca/\cb in {le/mw,me/rw} {
\draw[arrowstyle] (\ca) -- (\cb);
\draw[arrhdstyle] ($(\cb)+(-1.2mm,1.2mm)$) -- (\cb); \draw[arrhdstyle] ($(\cb)+(-1.2mm,-1.2mm)$) -- (\cb);}
\end{tikzpicture}
\caption{The alternating Legendrian $\Lambda'$ on the right is obtained from $\Lambda$ on the left by a square move.  The isotopy between them can be chosen to pass through $\Lambda_\#$ pictured in the middle.  If $\cF \in \Sh_\Lambda(D^2)$ is alternating, its image $\cF_\#$ in $\Sh_{\Lambda_\#}(D^2)$ is described by the stalks and generization maps pictured.  The dual quivers of the bicolored graphs of $\Lambda$, $\Lambda'$ have vertices labeling the positive face coordinates on their spaces of alternating sheaves.}
\label{fig:squaremove}
\end{figure}

We claim that in terms of $\cF_\#$, $X_{NE}$ is the composition of
\begin{enumerate}
\item the generization map from $N$ to $V$
\item the map from $V$ to the microstalk of $\cF_\#$ on the component of $\Lambda_\#$ passing immediately below the middle region
\item parallel transport in $(\cF_\#)_{\Lambda_\#}$ to a microstalk of $\cF_\#$ at the far right of the picture
\item the isomorphism between this microstalk and $E$.
\end{enumerate}
Note that under the isotopy $\Lambda \to \Lambda_\#$ the component of $\Lambda_\#$ passing below the middle region corresponds to the component of $\Lambda$ passing below the Northern region.  We obtain $\Lambda_\#$ from $\Lambda$ by performing Reidemesiter III moves at the black vertices of its bipartite graph followed by a Reidemeister II. The above description of $X_{NE}$ follows from \Cref{lem:R3}, which asserts the invariance of microlocal parallel transport under Reidemeister III, and the following observation about Reidemeister II.  In both sides of \Cref{fig:reid} there is a map from $A$ to $Cone(B \to C)$: on the left the composition $A \to C \to Cone(B \to C)$ and on the right $A \to Cone(C' \to A) \to Cone(B \to C)$.  Here $Cone(C' \to A) \to Cone(B \to C)$ is the canonical isomorphism coming from $C' \cong Cone(A \oplus B \to C)[-1]$.  These two maps agree by elementary homological algebra, and are exactly the maps being compared at the beginning of the two descriptions of $X_{NE}$.

More explicitly, the above description identifies $X_{NE}$ with the natural composition $N \congto V/S \congto E$.  Note that $N \to V/S$ being invertible is exactly the condition that $\cF_\#$ arose from an alternating sheaf on $\Lambda$.  It also follows that $N$, $E$, $S$, $W$ are all isomorphic to a single invertible $\coeffs$-module $I$ (if $\cF$ were not alternating $\cF_\#$ could have $N \cong S \cong I_1$ and $W \cong E \cong I_2$ for nonisomorphic invertible modules $I_1$, $I_2$).  Fixing identifications of each with $I$ and of $V$ with $I^2$, we encode the generization maps as a $2 \times 4$ matrix with columns labeled $S$, $W$, $N$, $E$ and entries in $\coeffs = \End_\coeffs I$.  We write $\Delta_{NE}$, etc., for the minors of this matrix, keeping track of orders of indices so, for example, $\Delta_{NE} = - \Delta_{EN}$.   The crossing conditions imply that minors of cyclically consecutive columns are invertible, and in terms of minors we can rewrite the above calculation as $X_{NE} = \Delta_{NS} / \Delta_{ES}$.  By symmetry, we also have: 
\begin{gather*}
X_{NE} = \frac{\Delta_{SN}}{\Delta_{SE}}, \quad X_{ES} = \frac{\Delta_{NE}}{\Delta_{NS}}, \quad X_{SW}=\frac{\Delta_{NS}}{\Delta_{NW}}, \quad X_{WN} =\frac{\Delta_{SW}}{\Delta_{SN}} \\
Y_{NE} = \frac{\Delta_{WN}}{\Delta_{WE}}, \quad Y_{ES} = \frac{\Delta_{WE}}{\Delta_{WS}}, \quad Y_{SW}=\frac{\Delta_{ES}}{\Delta_{EW}}, \quad Y_{WN} =\frac{\Delta_{EW}}{\Delta_{EN}}
\end{gather*}
The remaining holonomy $X_M$ is  determined by the relation $X_M^{-1} = -X_{WN}X_{NE}X_{ES}X_{SW}$, likewise for $Y_M$.

Recall the 2-term Pl\"{u}cker relation
\[
\Delta_{SN} \Delta_{EW} = \Delta_{SE}\Delta_{NW} + \Delta_{SW}\Delta_{EN}
\]
Dividing by $\Delta_{ES}\Delta_{WE}$ and reordering indices, we obtain the desired relation
\[
X_{NE} = \frac{\Delta_{SN}}{\Delta_{SE}} = \frac{\Delta_{WN}}{\Delta_{WE}}(1-\frac{\Delta_{WS}\Delta_{EN}}{\Delta_{ES}\Delta_{WN}}) = Y_{NE}(1+Y_M).
\]
The remaining relations follow from a symmetric calculation.
\end{proof}

By locality, the preceding result also determines how alternating sheaves microsupported on more complex alternating Legendrians transform under square moves. The more general transformation rules are naturally expressed in the language of cluster algebra \cite{Fom,Lec,Kel2}, which we briefly review following the notation of \cite{FG2,GHK}. Below we write $[a]_+$ for $\mathrm{max}(a,0)$.

\begin{definition}
A \textbf{seed} $s = (N,\{e_i\})$ is the data of a lattice $N$ with skew-symmetric integral form $\{,\}$ and a finite collection $\{e_i\}_{i \in I} \subset N$ of distinct primitive elements indexed by a set $I$. 
The mutation of $s$ at $k \in I$ is the seed $\mu_ks = (N,\{\mu_ke_i\})$, where 
\begin{equation}\label{eq:seedmutation}
\mu_ke_i = \begin{cases} e_i + [\{e_i,e_k\}]_+ e_k & i \neq k \\ -e_k & i = k. \end{cases}
\end{equation}
\end{definition}

To a seed we associate a quiver without oriented 2-cycles and with vertex set $\{v_i\}_{i \in I}$.  The number of arrows from $v_i$ to $v_j$ is $[\{e_i,e_j\}]_+$, and if the $e_i$ are a basis the seed is determined up to isomorphism by the quiver. 
Conversely, given such a quiver $Q$ we have a seed given by $\Z Q_0$ with its natural basis and skew-symmetric form; in the literature one often only considers seeds of this form. One can also consider seeds related to skew-symmetrizable matrices, but these do not arise in our setting. We also suppress a discussion of frozen indices, obviated in our case by allowing the $e_i$ to fail to generate $N$.

Given a seed $s = (N,\{e_i\})$, we write  $M = \Hom(N,\Z)$ and consider the dual algebraic tori
\[
\cX_s = \Spec \Z N, \quad \cA_s = \Spec \Z M,
\]
We let $z^n \in \Z N$ denote the monomial associated to $n \in N$, likewise $z^m \in \Z M$ for $m \in M$. 

\begin{definition}
For $k \in I$, the \textbf{cluster $\cX$- and $\cA$-transformations} $\mu_k: \cX_s \dasharrow \cX_{\mu_k s}$, $\mu_k: \cA_s \dasharrow \cA_{\mu_k s}$ are the rational maps defined by
\begin{equation}\label{eq:clustertrans}
\mu_k^* z^n = z^n(1+z^{e_k})^{\{e_k,n\}}, \quad \mu_k^* z^m = z^m(1+z^{\{e_k,-\}})^{-\langle e_k, m\rangle},
\end{equation}
where $\langle e_k, m \rangle$ denotes the evaluation pairing.
We use the term \textbf{signed cluster transformations} to refer to the counterparts of these maps where the plus signs are replaced by minus signs.
\end{definition}

Let $T$ be an infinite $|I|$-ary tree with edges labeled by $I$ so that the edges incident to a given vertex have distinct labels.  Fix a root $t_0 \in T_0$ and label it by the seed $s$.  Label the remaining $t\in T_0$ by seeds $s_t$ such that if $t$ and $t'$ are connected by an edge labeled $k$, and $t'$ is farther from $t_0$ than $t$, then $s_{t'} = \mu_k s_t$.

\begin{definition}\label{def:clusterstructure}
A \textbf{cluster $\cX$-structure} on $Y$ is a collection $\{\cX_{s_t} \into Y\}_{t \in T_0}$ of open maps such that the images of $\cX_{s_t}$ and $\cX_{\mu_k s_t}$ are related by a cluster $\cX$-transformation for all $t$, $k$. A partial cluster $\cX$-structure is the same but with maps only for a subset of $T_0$, a cluster $\cA$-structure the same but with $\cA$-tori and $\cA$-transformations, and a signed cluster structure the same but with signed cluster transformations.
\end{definition}

When the $e_i$ are linearly independent, the notions of signed and ordinary cluster $\cX$-structure coincide: given a homomorphism $\sigma: N \to \{\pm 1\}$ such that $\sigma(e_i) = -1$ for all $i$, the automorphism $z^n \mapsto \sigma(n)z^n$ intertwines the signed and ordinary cluster transformations.

There is a canonical seed associated to an embedded bicolored graph $\Gamma \subset \Sigma$ and a collection of marked points $\sigma \subset \Sigma$. Let $\{\partial F_i\} \subset H_1(\Gamma;\Z)$ be the set of boundaries of faces $F_i$ not meeting $\sigma$, where by faces we mean the contractible regions of $\Sigma \smallsetminus \Gamma$.  If $L$ is the conjugate Lagrangian of $\Gamma$, we have $H_1(\Gamma;\Z) \cong H_1(L;\Z)$, so the intersection pairing makes $(H_1(L;\Z),\{\partial F_i\})$ a seed.

The quiver of $(H_1(L;\Z),\{\partial F_i\})$ has vertices labeled by $\{\partial F_i\}$ and $\langle e_i, e_j \rangle_+$ arrows from $e_i$ to $e_j$.  
It can be drawn on $\Sigma$ as follows: the vertex labeled by $\partial F_i$ is drawn in $F_i$, and an edge of $\Gamma$ with distinctly colored endpoints and separating two faces is crossed by an arrow with the white endpoint on its right. This is pictured in \Cref{fig:squaremove}.  More precisely, the drawn quiver may have oriented 2-cycles, but removing these one obtains the quiver of $(H_1(L;\Z),\{\partial F_i\})$. Comparing the left picture of Figure \ref{fig:redexample} and its rotation gives the following.

\begin{proposition}\label{prop:GK} 
\cite[Sec. 4.1]{GK} Let $\Gamma \subset \Sigma$ be an embedded bicolored graph and $\Gamma'$ the result of performing a square move at a face $F_k$.  There is a homeomorphism of their conjugate Lagrangians $L$, $L'$ which identifies the seed $(H_1(L';\Z), \{\partial F'_i \})$ with the one obtained from $(H_1(L;\Z), \{\partial F_i \})$ by mutation at $\partial F_k$.
\end{proposition}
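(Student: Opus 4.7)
The plan is to reduce the statement to a purely local verification in a neighborhood $U \subset \Sigma$ of the square face $F_k$. Outside $U$ the graphs $\Gamma$ and $\Gamma'$ coincide, so their conjugate surfaces coincide there too; both the construction of the homeomorphism and the comparison of face cycles are therefore localized to $U$. I would first unwind the Goncharov--Kenyon construction of the conjugate surface in this neighborhood: $L|_U$ is a small topological surface-with-boundary whose interior retracts onto the quadrilateral piece of $\Gamma \cap U$, together with four short edges escaping to adjacent faces. The local picture can be read off from Figure~\ref{fig:conjlagintro} (left). Applying the square move rotates the coloring at the four corners by ninety degrees and reroutes the four escaping edges; the resulting surface $L'|_U$ is, as an abstract surface with parameterized boundary, obtained from $L|_U$ by a rotation.

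Having identified this abstract homeomorphism in $U$, I would glue it to the identity on the complement to obtain a homeomorphism $L \to L'$. Since the boundary parameterizations on $\partial U$ match up (the graph $\Gamma$ is unchanged outside $U$), no further choice is required, and the resulting homeomorphism is well-defined up to isotopy.

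The substantive step is the third one: tracking the face cycles under this homeomorphism. The cycle $\partial F'_k$ is sent to $-\partial F_k$ because the colors at the four corners of the square are swapped, reversing the co-orientation the Legendrian induces on the boundary of the face. For an adjacent face $F_i$, I would argue that $\partial F'_i$ equals $\partial F_i + c_{i,k}\, \partial F_k$ in $H_1(L';\Z) \cong H_1(L;\Z)$, where $c_{i,k}$ is the number of edges in $\partial F_i \cap \partial F_k$ whose white endpoint lies on the $F_i$ side; this $c_{i,k}$ can be computed directly from the local picture of how the boundaries rearrange in the square move.

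The main obstacle will be matching this combinatorial count $c_{i,k}$ with $[\{e_i,e_k\}]_+$. The identification $H_1(\Gamma;\Z) \cong H_1(L;\Z)$, together with the already-established fact (used in drawing the dual quiver earlier in the paper) that an arrow from $F_i$ to $F_k$ in the quiver corresponds to a positive contribution to the intersection pairing $\{e_i,e_k\}$, reduces the verification to checking that the quiver convention ``white vertex on the right'' aligns with the convention for which shared edges contribute to $c_{i,k}$ under the square move. Once this sign-orientation bookkeeping is pinned down, the formula matches \eqref{eq:seedmutation} exactly and the proposition follows.
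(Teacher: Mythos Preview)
Your approach is correct and is essentially what the paper does: the paper gives no detailed proof of this proposition, instead citing \cite[Sec.~4.1]{GK} and pointing to the local picture with the remark ``Comparing the left picture of \Cref{fig:conjlagintro} and its rotation gives'' the result. Your localization to a neighborhood of the square, identification of the local homeomorphism as a rotation, and direct tracking of face cycles is exactly the content behind that hint, just spelled out more carefully than the paper bothers to.
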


With this in hand we can state:

\begin{theorem} \label{thm:clustertrans}
Let $L$ be the conjugate Lagrangian of an alternating Legendrian $\Lambda \subset T^\infty \Sigma$, and $L'$, $\Lambda'$ their counterparts upon performing a square move at a face $F_k$ not meeting $\sigma$. We identify the underlying topological spaces of $L$ and $L'$ as in \Cref{prop:GK}, and identify the spaces of alternating sheaves with $\Loc_1(L)$  as in \Cref{def:coords}. 
Under the isomorphism of moduli spaces induced by the Legendrian isotopy $\Lambda \to \Lambda'$, the inclusions
$$\Loc_1(L) \hookrightarrow \cM_1(\Lambda,\sigma) \simeq \cM_1(\Lambda',\sigma) \hookleftarrow \Loc_1(L)$$
are related by the signed cluster $\cX$-transformation associated to the mutation of $(H_1(L;\Z),
\{\partial F_i \})$ at $\partial F_k$.
\end{theorem}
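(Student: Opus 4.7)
The plan is to bootstrap the global theorem from the local computation already carried out in \Cref{prop:localclustertrans}, using two key principles: the locality of Legendrian isotopy together with the functoriality of the \cite{GKS} equivalence, and the sheaf-theoretic description of the charts $\Loc_1(L) \into \cM_1(\Lambda, \sigma)$ via alternating sheaves (\Cref{thm:charts}).

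First I would fix a Legendrian isotopy realizing the square move with the factorization illustrated in \Cref{fig:squaremove}: a sequence of Reidemeister-III moves at the black vertices of the bipartite graph in the square face $F_k$ followed by a Reidemeister-II move. All of these moves can be performed inside a small open disk $D \subset \Sigma$ containing $F_k$ and no other cycles of $\Gamma$, so by \Cref{prop:GKS} the induced equivalence $\Sh_\Lambda(\Sigma, \sigma) \cong \Sh_{\Lambda'}(\Sigma, \sigma)$ restricts to the identity on $\Sigma \smallsetminus D$. Consequently, for every face $F_i$ whose cycle is disjoint from $D$ the associated positive face coordinate is preserved, which matches the signed cluster $\cX$-transformation since such cycles have trivial intersection pairing with $\partial F_k$ (so $\{e_k, e_i\} = 0$ and the transformation is the identity in these coordinates).

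For the remaining five coordinates — those of the square face $F_k$ and its four neighbors $F_{NE}, F_{ES}, F_{SW}, F_{WN}$ — I would directly invoke \Cref{prop:localclustertrans}. The five transformation laws there read $X_M = Y_M^{-1}$, $X_{NE} = Y_{NE}(1+Y_M)$, $X_{SW} = Y_{SW}(1+Y_M)$, $X_{ES} = Y_{ES}(1+Y_M^{-1})^{-1}$, and $X_{WN} = Y_{WN}(1+Y_M^{-1})^{-1}$. To identify these with the signed cluster $\cX$-transformation, I would appeal to \Cref{prop:GK} to compute the relevant intersection pairings: for the four corner faces, $\{e_k, e_i\} = \pm 1$ with signs alternating around $F_k$ in accordance with the arrows in the dual quiver shown in \Cref{fig:squaremove}. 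Substituting these exponents into \eqref{eq:clustertrans} yields exactly the formulas above (in positive face coordinates, the $+$ signs on the RHS of \eqref{eq:clustertrans} are the correct ones), while the rule $e_k \mapsto -e_k$ under mutation corresponds to the inversion $X_M = Y_M^{-1}$. Passing from positive to standard face coordinates (which differ by a sign on each face) converts this into the signed cluster transformation of the theorem statement.

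The main obstacle I anticipate is the careful bookkeeping of sign conventions. Three different orientation choices must be reconciled: the co-orientations of $\Lambda$ at the four crossings of $F_k$, which determine the identification of corner stalks with microstalks used in the definition of standard face coordinates; the cyclic ordering of the four corners $NE, ES, SW, WN$ around $F_k$ in the front projection of $\Lambda$ versus $\Lambda'$ (which picks up a rotation under the square move, compare the left and right pictures in \Cref{fig:squaremove}); and the signs of the intersection pairings $\langle \partial F_k, \partial F_i\rangle$ on the conjugate Lagrangian, described by \Cref{prop:GK}. Verifying that the exponents produced by \Cref{prop:localclustertrans} match the arrows of the dual quiver — with the correct signs in each of the four positions — is the heart of the identification with the signed cluster transformation, and this is where care is needed to avoid off-by-one or global-sign errors.
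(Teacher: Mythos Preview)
Your proposal is correct and follows essentially the same approach as the paper: use locality of the isotopy so that the GKS equivalence is the identity outside a disk containing $F_k$, then invoke the local computation of \Cref{prop:localclustertrans} inside that disk and match the resulting formulas (in positive face coordinates) with the signed cluster $\cX$-transformation via the sign discrepancy between positive and standard coordinates. The paper's proof is more terse and leaves the explicit matching of exponents with the dual quiver implicit, but your more detailed unpacking of the intersection pairings via \Cref{prop:GK} and the sign bookkeeping is exactly what underlies their one-line identification.
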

\begin{proof}
Since the isotopy $\Lambda \to \Lambda'$ is stationary outside a neighborhood of the face $F$, the equivalence $\Sh_\Lambda(\Sigma) \cong \Sh_{\Lambda'}(\Sigma)$  restricts to the identity outside such a neighborhood.  On the other hand, in a neighborhood of $F_k$ it restricts to the equivalence explicitly computed in \Cref{prop:localclustertrans}. Since positive and standard face coordinates differ by a sign, the formulas computed there are exactly those expressing the signed cluster $\cX$-transformation associated to mutation of $(H_1(L;\Z),\{\partial F_i\})$ at $\partial F_k$.
\end{proof}

The result extends to framed moduli spaces in an obvious way. Strictly speaking, in the unframed case the points of  $\Loc_1(L)$  have $\G_m$ stabilizers, but we use the usual cluster terminology regardless.

\begin{corollary}
Suppose $\Lambda \in T^\infty \Sigma$ is an alternating Legendrian. Then $\cM_1(\Lambda,\sigma)$ has a partial, signed cluster $\cX$-structure with charts labeled by alternating Legendrians obtained by some series of square moves from $\Lambda$. If $\sigma$ is nonempty it has an ordinary partial cluster $\cX$-structure.
\end{corollary}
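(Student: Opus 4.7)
The plan is to assemble \Cref{thm:charts}, \Cref{thm:clustertrans}, \Cref{prop:GKS}, and \Cref{prop:GK} into a chart-by-chart construction. Let $\Gamma$ be the bicolored graph of $\Lambda$ with conjugate Lagrangian $L$, and let $s_0 := (H_1(L;\Z), \{[\partial F_i]\}_{i\in I})$ be the seed whose basis cycles are the boundaries of faces of $\Gamma$ not meeting $\sigma$. Fix a root $t_0$ in the $|I|$-ary mutation tree $T$, label it by $s_0$, and extend to a labeling of all of $T_0$ as in the paragraph preceding \Cref{def:clusterstructure}.

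First I would build inductively a subset $T' \subseteq T_0$ together with alternating Legendrians $\{\Lambda_t\}_{t \in T'}$, where $T'$ consists of those vertices reachable from $t_0$ by edges whose labels correspond to realizable square moves. Setting $\Lambda_{t_0} := \Lambda$, whenever $t'$ is joined to $t \in T'$ by an edge labeled $k$ and the face of the bicolored graph of $\Lambda_t$ corresponding to $e_k$ is a quadrilateral not meeting $\sigma$, we include $t'$ in $T'$ and take $\Lambda_{t'}$ to be the result of the square move. \Cref{prop:GK} then guarantees that the seed of the conjugate Lagrangian $L_{t'}$ is exactly $s_{t'}$. Composing \Cref{thm:charts} with the canonical equivalences from \Cref{prop:GKS} gives an open inclusion
\[
\iota_t:\, \cX_{s_t} \cong \Loc_1(L_t) \hookrightarrow \cM_1(\Lambda_t,\sigma) \cong \cM_1(\Lambda,\sigma)
\]
for each $t \in T'$, and \Cref{thm:clustertrans} ensures that for adjacent $t,t' \in T'$ joined by an edge labeled $k$, these charts differ by the signed cluster $\cX$-transformation $\mu_k$. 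This produces the partial signed cluster $\cX$-structure.

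When $\sigma$ is nonempty I would upgrade to an ordinary cluster structure by sign-twisting. It suffices to verify that $\{[\partial F_i]\}$ is linearly independent in $H_1(L;\Z) \cong H_1(\Gamma;\Z)$: viewing the faces of $\Gamma$ as the $2$-cells of a CW structure on $\Sigma$ with $1$-skeleton $\Gamma$, the module of relations among all cycles $\{[\partial F]\}_F$ is $H_2(\Sigma;\Z)$, which is either zero or cyclic generated by $\sum_F F$ according to whether $\Sigma$ is non-closed or closed, respectively. In either case no nontrivial relation lies purely in the subgroup spanned by faces avoiding the nonempty set $\sigma$, so there exists a homomorphism $\epsilon:H_1(L;\Z)\to\{\pm 1\}$ with $\epsilon([\partial F_i])=-1$ for all $i$; the induced twist $z^n \mapsto \epsilon(n)z^n$ on each $\cX_{s_t}$ then intertwines the signed and ordinary cluster transformations.

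The main subtle point is that $T'$ need not be all of $T_0$: a mutation at the vertex $e_k$ of the tree can only be realized as a square move when the corresponding face is a quadrilateral in the current graph, which is why we obtain only a \emph{partial} cluster structure. Checking that each step of the construction is well-defined reduces to verifying that \Cref{prop:GK} is compatible with composition of square moves—this follows because seed mutation along an edge of $T$ depends only on the edge label, matching the combinatorial rule for square moves on bicolored graphs.
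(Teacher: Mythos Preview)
Your argument follows the same route as the paper's two-line proof, which declares the first assertion immediate from \Cref{thm:clustertrans} and obtains the second from the remark after \Cref{def:clusterstructure} by asserting that the $\partial F_i$ are independent in $H_1(L;\Z)$ when $\sigma$ is nonempty. You have simply spelled out the tree construction and supplied an argument for the independence claim that the paper leaves unproved.

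There is one small gap in that independence argument: when you write ``viewing the faces of $\Gamma$ as the $2$-cells of a CW structure on $\Sigma$ with $1$-skeleton $\Gamma$'', you are assuming every component of $\Sigma \smallsetminus \Gamma$ is contractible, which the paper's definition of face does not require. If some region is non-contractible, the faces do not tile $\Sigma$ and the relation module is not $H_2(\Sigma;\Z)$. The fix is easy: attach $2$-cells only along the faces to obtain a complex $\Sigma'$; if $\Sigma' \ne \Sigma$ then $\Sigma'$ is a compact surface with nonempty boundary, hence $H_2(\Sigma';\Z)=0$ and the $\partial F_i$ are already independent without any hypothesis on $\sigma$. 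Your argument then handles the remaining case $\Sigma'=\Sigma$ exactly as written.
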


\begin{proof}
The first part of the corollary is immediate, the second follows from the remark after \Cref{def:clusterstructure} since the $\partial F_k$ are independent in homology when $\sigma$ is nonempty.
\end{proof}

\begin{figure}
\centering
\begin{tikzpicture}
\newcommand*{\off}{18};\newcommand*{\rad}{2.3};\newcommand*{\vrad}{1.0};
\node (l) [matrix] at (0,0) {
\coordinate (bl) at (180:\rad/2); \coordinate (br) at (0:\rad/2);
\coordinate (wt) at (90:\rad*.8); \coordinate (wb) at (-90:\rad*.8);
\coordinate (wr) at (0:\rad*.9); \coordinate (wl) at (180:\rad*.9);
\coordinate (ur) at ($(-\off:\rad)!\ifac!(-180+\off:\rad)$); \coordinate (lr) at ($(\off-180:\rad)!\ifac!(-\off:\rad)$);
\coordinate (ul) at ($(\off:\rad)!\ifac!(-180-\off:\rad)$); \coordinate (ll) at ($(-\off-180:\rad)!\ifac!(\off:\rad)$);
\coordinate (mr) at ($(ur)!.5!(lr)$); \coordinate (ml) at ($(ul)!.5!(ll)$);
\path[fill=\fcolor, name path=p1] (90+\off:\rad) -- (-90-\off:\rad) arc[start angle=-90-\off,end angle=-90+\off,radius=\rad] (-90+\off:\rad) -- (90-\off:\rad) arc[start angle=90-\off,end angle=90+\off,radius=\rad] (90+\off:\rad);
\path[fill=\fcolor, name path=p2] (180-\off:\rad)  to[out=0,in=-180] (lr) to (ur) [out=0,in=-180] to (\off:\rad) arc[start angle=\off,delta angle=-2*\off,radius=\rad] (-\off:\rad) [out=-180,in=0] to (ul) to (ll) to[out=-180,in=0] (180+\off:\rad) arc[start angle=180+\off,delta angle=-2*\off,radius=\rad] (180-\off:\rad);
\path[fill=white, name intersections={of=p1 and p2, name=i}] (i-1) -- (i-2) -- (i-4) -- (i-3) -- cycle;
\draw[dashed] (0,0) circle (\rad);
\draw[graphstyle] (wb) to (br) to (wt) to (bl) to (wb);
\draw[graphstyle] (br) to (wr) to (0:\rad);
\draw[graphstyle] (bl) -- (wl) -- (180:\rad);
\draw[graphstyle] (wt) to (90:\rad); \draw[graphstyle] (wb) to (-90:\rad);
\foreach \c in {bl,br,wt,wb,wr,wl} {\fill[black] (\c) circle (\bvertrad);}
\foreach \c in {wt,wb,wr,wl} {\fill[white] (\c) circle (\wvertrad);}
\draw[asdstyle,righthairs] (90+\off:\rad) -- (-90-\off:\rad);
\draw[asdstyle,righthairs] (-90+\off:\rad) -- (90-\off:\rad);
\draw[asdstyle,lefthairs] (mr) to (ur) [out=0,in=180] to (\off:\rad);
\draw[asdstyle,righthairs] (mr) to (lr) to[in=0,out=180] (180-\off:\rad);
\draw[asdstyle,righthairs] (ml) to (ul) [out=0,in=180] to (-\off:\rad);
\draw[asdstyle,lefthairs] (ml) to (ll) to[in=0,out=180] (180+\off:\rad);\\};

\node (m) [matrix] at (2.5*\rad,0) {
\path[fill=\fcolor] (0,0) to[out=-90,in=90]  (-90-\off:\rad) arc[start angle=-90-\off,delta angle=2*\off,radius=\rad] (-90+\off:\rad) to[out=90,in=-90] (0,0);
\path[fill=\fcolor] (0,0) to[out=0,in=180]  (-\off:\rad) arc[start angle=-\off,delta angle=2*\off,radius=\rad] (\off:\rad) to[out=180,in=0] (0,0);
\path[fill=\fcolor] (0,0) to[out=90,in=-90]  (90-\off:\rad) arc[start angle=90-\off,delta angle=2*\off,radius=\rad] (90+\off:\rad) to[out=-90,in=90] (0,0);
\path[fill=\fcolor] (0,0) to[out=180,in=0]  (180-\off:\rad) arc[start angle=180-\off,delta angle=2*\off,radius=\rad] (180+\off:\rad) to[out=0,in=180] (0,0);
\draw[dashed] (0,0) circle (\rad);
\draw[asdstyle,lefthairsnogap] (0,0) to[out=90,in=-90] (90+\off:\rad);
\draw[asdstyle,righthairsnogap] (0,0) to[out=-90,in=90]  (-90-\off:\rad);
\draw[asdstyle,righthairsnogap] (0,0) to[out=90,in=-90]  (90-\off:\rad);
\draw[asdstyle,lefthairsnogap] (0,0) to[out=-90,in=90]  (-90+\off:\rad);
\draw[asdstyle,lefthairsnogap] (0,0) to[out=0,in=180] (\off:\rad);
\draw[asdstyle,righthairsnogap] (0,0) to[out=180,in=0] (180-\off:\rad);
\draw[asdstyle,righthairsnogap] (0,0) to[out=0,in=180] (-\off:\rad);
\draw[asdstyle,lefthairsnogap] (0,0) to[out=180,in=0] (180+\off:\rad);\\};

\node (r) [matrix] at (5*\rad,0) {
\coordinate (bl) at (180+90:\rad/2); \coordinate (br) at (0+90:\rad/2);
\coordinate (wt) at (90+90:\rad*.8); \coordinate (wb) at (-90+90:\rad*.8);
\coordinate (wr) at (0+90:\rad*.9); \coordinate (wl) at (180+90:\rad*.9);
\coordinate (ur) at ($(-\off+90:\rad)!\ifac!(-90+\off:\rad)$); \coordinate (lr) at ($(\off-90:\rad)!\ifac!(90-\off:\rad)$);
\coordinate (ul) at ($(\off+90:\rad)!\ifac!(-90-\off:\rad)$); \coordinate (ll) at ($(-\off-90:\rad)!\ifac!(90+\off:\rad)$);
\coordinate (mr) at ($(ur)!.5!(lr)$); \coordinate (ml) at ($(ul)!.5!(ll)$);
\path[fill=\fcolor, name path=p1] (90+90+\off:\rad) -- (-90+90-\off:\rad) arc[start angle=-\off,end angle=\off,radius=\rad] -- (180-\off:\rad) arc[start angle=180-\off,end angle=180+\off,radius=\rad];
\path[fill=\fcolor, name path=p2] (180+90-\off:\rad)  to[out=90,in=-90] (lr) to (ur) [out=90,in=-90] to (\off+90:\rad) arc[start angle=90+\off,delta angle=-2*\off,radius=\rad] [out=-90,in=90] to (ul) to (ll) to[out=-90,in=90] (180+90+\off:\rad) arc[start angle=180+90+\off,delta angle=-2*\off,radius=\rad];
\path[fill=white, name intersections={of=p1 and p2, name=i}] (i-1) -- (i-2) -- (i-4) -- (i-3) -- cycle;
\draw[dashed] (0,0) circle (\rad);
\draw[graphstyle] (wb) to (br) to (wt) to (bl) to (wb);
\draw[graphstyle] (br) to (wr) to (90:\rad);
\draw[graphstyle] (bl) -- (wl) -- (180+90:\rad);
\draw[graphstyle] (wt) to (90+90:\rad); \draw[graphstyle] (wb) to (-90+90:\rad);
\foreach \c in {bl,br,wt,wb,wr,wl} {\fill[black] (\c) circle (\bvertrad);}
\foreach \c in {wt,wb,wr,wl} {\fill[white] (\c) circle (\wvertrad);}
\draw[asdstyle,righthairs] (90+90+\off:\rad) -- (-90+90-\off:\rad);
\draw[asdstyle,righthairs] (-90+90+\off:\rad) -- (90+90-\off:\rad);
\draw[asdstyle,lefthairs] (mr) to (ur) [out=0+90,in=180+90] to (\off+90:\rad);
\draw[asdstyle,righthairs] (mr) to (lr) to[in=0+90,out=180+90] (180+90-\off:\rad);
\draw[asdstyle,righthairs] (ml) to (ul) [out=0+90,in=180+90] to (-\off+90:\rad);
\draw[asdstyle,lefthairs] (ml) to (ll) to[in=0+90,out=180+90] (180+90+\off:\rad);\\};
\coordinate (rw) at ($(r.west)+(-1mm,0)$); \coordinate (me) at ($(m.east)+(1mm,0)$);
\coordinate (mw) at ($(m.west)+(-1mm,0)$); \coordinate (le) at ($(l.east)+(1mm,0)$);
\foreach \ca/\cb in {le/mw,me/rw} {
\draw[arrowstyle] (\ca) -- (\cb);
\draw[arrhdstyle] ($(\cb)+(-1.2mm,1.2mm)$) -- (\cb); \draw[arrhdstyle] ($(\cb)+(-1.2mm,-1.2mm)$) -- (\cb);}
\end{tikzpicture}
\caption{The square move as Lagrangian surgery.  The left and right frames show the front projections of alternating Legendrians $\Lambda$, $\Lambda'$ related by a square move of their associated bipartite graphs.  The shaded regions indicate the projections of a family of exact Lagrangian fillings, which become singular in the middle frame.}\label{fig:surgery}
\end{figure}
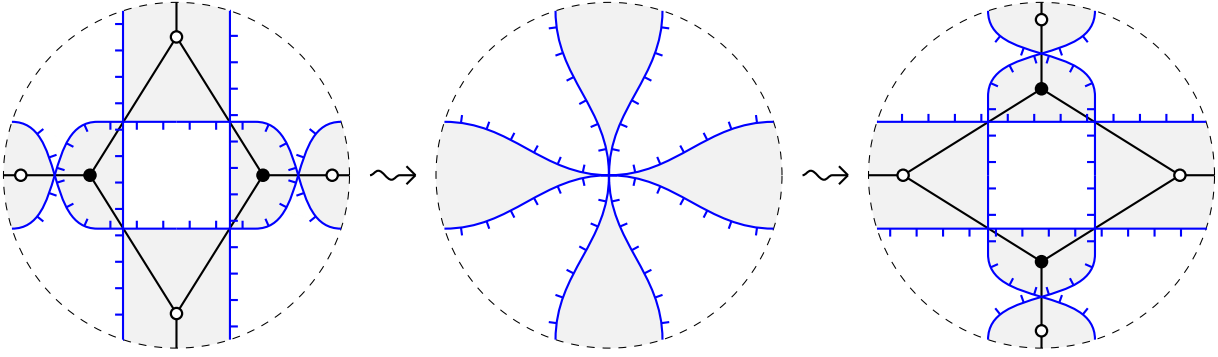

Given an exact Lagrangian surface $L$ with a nodal singularity, one can produce two exact Lagrangians $L_+$, $L_-$ which coincide with $L$ outside a neighborhood of the singularity.  These are said to differ by Lagrangian surgery \cite{LS,Pol}.  Both $L_+$ and $L_-$ have  degenerations to $L$ through smooth exact Lagrangians but are not themselves related by a Hamiltonian isotopy.  The degenerations to $L$ are accompanied by the collapse of a sphere, the vanishing cycle of the surgery.  Surgery of exact Lagrangians is directly related to wall-crossing phenomena in Floer cohomology \cite[Chap. 10]{FOOO}, and thus to the appearance of cluster transformations in symplectic geometry in the guise of wall-crossing transformations \cite{KoS,KoS2}. 

In the present setting, Lagrangian surgery on conjugate Lagrangians provides the symplectic interpretation of the square move on bipartite graphs.

\begin{proposition}
\label{prop:squaremovesurgery}
Let $L$, $L'$ be conjugate Lagrangians related by performing a square move at $\partial F_k \in H_1(L;\Z)$.  Then $L$ and $L'$ are related by a Lagrangian surgery whose vanishing cycle is $\partial F_k$.
\end{proposition}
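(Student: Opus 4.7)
The approach is to reduce to an explicit local model. Both the square move and Lagrangian surgery are local operations, and $L$ and $L'$ coincide outside a neighborhood of the quadrilateral face $F_k$, so it suffices to exhibit the surgery inside a disk $D^2 \subset \Sigma$ containing the closure of $F_k$ and no other faces. Inside $T^*D^2$, the relevant pieces of $L$ and $L'$ are the conjugate Lagrangians of the local alternating Legendrians depicted on the left and right of \Cref{fig:surgery}.

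The plan is to construct explicitly a one-parameter family of exact Lagrangians $L_t \subset T^*D^2$, $t \in (-\epsilon,\epsilon)$, such that $L_t$ agrees with the local piece of $L$ for $t<0$, agrees with the local piece of $L'$ for $t>0$, and $L_0$ is a singular Lagrangian with a single ordinary double point at the origin of $T^*D^2$. The singular member $L_0$ is the exact filling of the middle Legendrian of \Cref{fig:surgery}, characterized by the property that its intersection with the cotangent fiber $T_0^* D^2$ is the union of the conormal lines to the coordinate axes, i.e., the $\xi$- and $\eta$-axes in $T_0^*\R^2$; this is a transverse self-intersection, hence an ordinary double point. To build the family I would promote the defining function $m$ of \Cref{prop:HWL} to a family $m_t$ by modifying its boundary behavior on the four edges surrounding $F_k$, so that the quadrilateral face shrinks continuously to a point as $t \to 0^-$ and reopens in the perpendicular direction as $t$ passes through zero; each $m_t$ must still obey the positivity, boundary, and divisibility conditions of \Cref{prop:HWL} so that $L_t$ is a genuine exact Lagrangian with the prescribed (unchanging) Legendrian boundary.

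Given the family, identifying it with Lagrangian surgery is essentially formal: in dimension two the local model of Lagrangian surgery is exactly the smoothing of an ordinary double point into one of two annuli, and the two smoothings $L_t$ ($t<0$) and $L_t$ ($t>0$) of $L_0$ realize these two possibilities. To identify the vanishing cycle with $\partial F_k$, I would observe that in the embedding $L \hookrightarrow T^*D^2$ the quadrilateral face $F_k$ and its four exceptional arcs together bound a closed disk in $L$ whose boundary is $\partial F_k$; this disk collapses to the singular point of $L_0$ as $t \to 0^-$, so by the Picard--Lefschetz description of Lagrangian surgery \cite{LS,Pol} the class $\partial F_k \in H_1(L;\Z)$ is the vanishing cycle. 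The same analysis from the other side identifies $\partial F_k$ with the vanishing cycle in $L'$ (now bounding the reopened face in the orthogonal direction), which matches the identification of $H_1(L;\Z)$ and $H_1(L';\Z)$ used in \Cref{prop:GK}.

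The main obstacle is the explicit construction of the interpolating family $m_t$ and the verification that the degeneration at $t=0$ is genuinely nodal rather than something more singular. One must choose $m_t$ so that the Lagrangians remain tame and exact, so that no additional singularities appear away from the origin, and so that the tangent cone to $L_0$ at the singular point is the transverse union of the two conormal lines. A secondary delicate point is confirming that the two smoothings $L_{-\epsilon}$ and $L_{+\epsilon}$ coincide up to Hamiltonian isotopy (supported in $D^2$) with the local pieces of the conjugate Lagrangians of the original and mutated bicolored graphs; this uses the Hamiltonian-isotopy uniqueness statement in \Cref{prop:HWL}, applied to the one-parameter families $m_t$ for $t<0$ and $t>0$ separately.
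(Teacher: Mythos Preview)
Your geometric construction is essentially the paper's approach: run the Legendrian isotopy of \Cref{fig:surgery}, take conjugate Lagrangians at each alternating moment, and observe that at the single non-alternating moment the filling becomes nodal with tangent cone the union of the two conormal lines at the origin. The paper does not attempt an explicit family $m_t$; it simply invokes the conjugate Lagrangian construction at each time and asserts the nodal degeneration at the middle frame, so your plan is more detailed on this point but not different in spirit. One small wording issue: the Legendrian boundary of $L_t$ is \emph{not} unchanging inside $D^2$ --- it moves along the isotopy of \Cref{fig:surgery} --- only its restriction to $\partial D^2$ is fixed.

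There is, however, a substantive omission. You treat ``identifying it with Lagrangian surgery'' as formal once the nodal family is built, but the paper regards it as part of the claim that $L$ and $L'$ \emph{honestly} differ by surgery, i.e.\ are not Hamiltonian isotopic rel boundary. A one-parameter family through a node does not by itself preclude the two smoothings from being Hamiltonian isotopic via some other path. The paper closes this gap by invoking \Cref{thm:clustertrans}: since the induced birational map $\Loc_1(L) \dashrightarrow \Loc_1(L')$ is a nontrivial cluster $\cX$-transformation, the subcategories of branes supported on $L$ and $L'$ are distinct in $\Fuk_\Lambda(T^*\Sigma) \cong \Fuk_{\Lambda'}(T^*\Sigma)$, hence $L$ and $L'$ cannot be Hamiltonian isotopic. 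Your proposal should include this step, or some substitute argument for non-isotopy, to match the content of the proposition as the paper understands it.
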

\begin{proof}
Consider $T^*\R^2_x \cong \R^2_x \times \R^2_y$ with base coordinates $x = (x_1, x_2)$ and fiber coordinates $y = (y_1, y_2)$. 
Let $\{L_t \subset T^*\R^2_x\}_{t \in \R}$ denote the following family of Lagrangians, smooth except when $t=0$. For $t > 0$, $L_t \subset T^*\R^2_x$ is parametrized by $(0,1) \times S^1$ via
\begin{align*} (r, \theta) \mapsto & (x_1, x_2; y_1, y_2)\\ & = (tr^{-1/2} \cos \theta, \,t(1-r)^{-1/2} \sin \theta;\, t(1-r)^{-1/2} \cos \theta,\, -tr^{-1/2} \sin \theta). \end{align*}
In the projection to $\R^2_x$, $r$ parametrizes all ellipses passing through the four points $(\pm t, \pm t)$. One easily checks that $L_t$ is exact. It is the conjugate Lagrangian of the alternating Legendrian whose front projection is the union of the lines $x_1 = \pm t$, co-oriented towards the $x_1$-axis, and the lines $x_2 = \pm t$, co-oriented away from the $x_2$-axis.

We define $L_t$ for $t < 0$ similarly, but with the $x_1$- and $x_2$-axes reversed, and let $L_0$ denote the union of the conormal planes to the coordinate axes. Together this family interpolates between $L$,~$L'$ as pictured in \Cref{fig:surgery}, up to restricting to a sufficiently small disk and perturbing the middle frame of the figure so that near the origin its front projection coincides with the coordinate axes.

We recall the local model for surgery on the immersed Lagrangian $T^*_0 \R_x^2 \cup \R_x^2$ following \cite{Sei}. Let $C \subset T^*\R_{x_1}$ be the image of a smooth embedding of $\R$ such that $C$ coincides with $(\R_+ \times \{0\}) \cup (\{0\} \times \R_+)$ outside of a compact set and such that $C$ and $-C$ are disjoint. Then the surgery $T^*_0 \R_x^2 \# \R_x^2 \subset T^* \R_x^2$ is the orbit of $C$ under the Hamiltonian $S^1$ action lifting rotation in $\R_x^2$.

We now transform the family $L_t$ by rotation in $T^*\R_{x_2}$ so that for $t > 0$ it is parametrized via
$$ (r, \theta) \mapsto (tr^{-1/2} \cos \theta,\, tr^{-1/2} \sin \theta;\, t(1-r)^{-1/2} \cos \theta,\, t(1-r)^{-1/2} \sin \theta). $$
This takes $L_0$ to $T^*_0 \R_x^2 \cup \R_x^2$. For $t > 0$, $L_t$ is now $S^1$-invariant and meets $T^*\R_{x_1}$ along the closed curve
$$C_t = \{(x_1, y_1)\, |\, x_1^{-2} + y_1^{-2} = t^{-2}, x_1, y_1 > 1\},$$
which is asymptotic to the lines $x_1 = t$, $y_1 = t$.

Now let $H_t \in C^\infty(T^*\R_{x}^2)$ be an $S^1$-invariant function whose restriction to $T^*\R_{x_1}$ coincides with $\frac{x_1 + y_1}{|x_1 + y_1|}(x_1 - y_1)$ on the region $U$ where $|x_1 + y_1| > (\sqrt{2} - 1)t - \epsilon$. The Hamiltonian flow of $H_t$ preserves $T^*\R_{x_1}$ and its restriction to $U$ is the contracting flow $\frac{-(x_1 + y_1)}{|x_1 + y_1|} (\partial_{x_1} + \partial_{y_1})$ towards the line $x_1 + y_1 = 0$. After flowing for time $t$ the curve $C_t$ remains in the first quadrant but is now asymptotic to the coordinate axes. Since $H_t$ is $S^1$-invariant its Hamiltonian flow preserves the $S^1$-invariance of $L_t$. By a further $S^1$-invariant Hamiltonian perturbation supported near infinity we can make $L_t$ coincide with $T^*_0 \R_x^2 \cup \R_x^2$ outside a sufficiently large ball centered at the origin. Thus for $t>0$, $L_t$ is Hamiltonian isotopic to the surgery $T^*_0 \R_x^2 \# \R_x^2$.

On the other hand, by switching the roles of $x_1$ and $x_2$ we see by symmetry that for $t<0$ $L_t$ is Hamiltonian isotopic to the opposite surgery $\R_x^2 \# T^*_0 \R_x^2$. In either case it is evident that the vanishing cycle is as stated. 
\end{proof}

\subsection{The boundary measurement map} 
In \Cref{sec:Dylan} we saw that essentially all Legendrian braid satellites of cocircles have alternating Legendrian isotopy representatives. This leads to the question of describing the associated cluster charts in terms of natural coordinates on spaces of $\beta$-filtered local systems. We treat here the fundamental case of positroid strata, showing that our study of Legendrian isotopy recovers the boundary measurement map of Postnikov. This says in particular that, in terms of Pl\"ucker coordinates, the cluster charts produced by Hamiltonian isotopy of conjugate Lagrangians can be described as sums over perfect matchings on bipartite graphs.

Recall from \Cref{sec:Dylan} that a reduced plabic graph in $D^2$ is one whose alternating Legendrian satisfies \Cref{def:reduced}. These are exactly the Legendrians which, up to isotopy, arise from positroids:

\begin{proposition}
Given a reduced plabic graph $\Gamma$, there exists a unique cyclic rank matrix $r$ such that $\Lambda_\Gamma$ and $\Lambda_r$ are Legendrian isotopic.  
\end{proposition}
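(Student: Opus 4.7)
The plan is to reduce the statement to the classical bijection between positroids and reduced plabic graphs (up to moves) via decorated permutations, and then invoke Thurston's theorem on reduced alternating Legendrians. Both $\Lambda_\Gamma$ and $\Lambda_r$ are reduced Legendrians in $T^\infty D^2$ in the sense of \Cref{def:reduced}, and the former is alternating by construction. I would first observe that $\Lambda_r$ is also alternating: the regions of $D^2 \smallsetminus \pi(\Lambda_r)$ are naturally labeled by (patches of constant value of) entries of the cyclic rank matrix, and the co-orientation conventions of \Cref{sec:positroids4life} color the regions around each matrix entry that strictly dominates its neighbors as black, the regions around each entry strictly dominated by its neighbors as white, and the remaining regions as null. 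Hence by \Cref{prop:Thurston,prop:uniqueisotopies} the isotopy class of $\Lambda_\Gamma$ (resp.\ $\Lambda_r$) in $T^\infty D^2$, relative to its boundary points, is determined by the induced matching between incoming and outgoing points of $\pi(\Lambda_\Gamma) \cap \partial D^2$ (resp.\ $\pi(\Lambda_r) \cap \partial D^2$).

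Next I would read off the boundary matching $\pi_r$ of $\Lambda_r$. By construction in \Cref{sec:positroids4life}, the strands of $\pi(\Lambda_r)$ are the smoothed jumping loci of the entries of $r$, and the $n$ incoming (resp.\ outgoing) boundary points correspond to the $n$ rows (resp.\ columns) labeled cyclically by $\Z/n$. Tracing a strand from the row-$i$ boundary point to its terminal column-$j$ boundary point reproduces exactly the decorated permutation $\pi_r \colon [n] \to [n]$ that labels the positroid stratum $\Pi_r$ in the Knutson--Lam--Speyer classification \cite{KLS}. In particular the assignment $r \mapsto \pi_r$ is a bijection between cyclic rank matrices of type $(k,n)$ (with no vanishing diagonal, per our convention) and decorated permutations of $[n]$ of the appropriate type; this gives the uniqueness claim for $r$.

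For existence, I would compute the boundary matching $\pi_\Gamma$ of $\Lambda_\Gamma$: by definition, following a strand of $\pi(\Lambda_\Gamma)$ (i.e.\ a zig-zag path of $\Gamma$) from an incoming boundary point to an outgoing one is the trip permutation of the reduced plabic graph $\Gamma$ in the sense of \cite{Po}. By Postnikov's theorem, every decorated permutation is realized by some reduced plabic graph, and conversely the trip permutation of a reduced plabic graph is a decorated permutation. Hence $\pi_\Gamma$ arises as $\pi_r$ for a unique cyclic rank matrix $r$; by the previous paragraph the boundary matchings of $\Lambda_\Gamma$ and $\Lambda_r$ then coincide (after a harmless isotopy of $D^2$ bringing their boundary point sets into identification), and \Cref{prop:uniqueisotopies} produces a Legendrian isotopy between them.

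The main obstacle I anticipate is purely bookkeeping: carefully matching the cyclic ordering, co-orientation convention, and labeling of the $2n$ boundary points on $\partial D^2$ coming from the two very different constructions (the grid-based smoothing that produces $\Lambda_r$ versus the strand diagram of $\Gamma$). Once the conventions are aligned so that ``row $i$ in $\Lambda_r$'' matches ``$i$-th incoming boundary point in $\Lambda_\Gamma$'' and likewise for columns, the equality $\pi_r = \pi_\Gamma$ becomes the standard dictionary between cyclic rank matrices, decorated permutations, and trip permutations of reduced plabic graphs; no further geometric input is needed beyond \Cref{prop:Thurston,prop:uniqueisotopies}.
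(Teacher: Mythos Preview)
Your approach is essentially the same as the paper's: both arguments reduce to the fact that a reduced Legendrian in $T^\infty D^2$ is determined up to isotopy by its boundary matching (\Cref{prop:uniqueisotopies}), together with the bijection between boundary matchings and cyclic rank matrices. The paper's version is terser --- it simply notes that the components of $\Lambda_r$ determine a boundary matching which in turn determines $r$, and that $\Lambda_r$ is reduced --- while you spell out the dictionary via decorated/trip permutations, but the content is the same.

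One point to flag: your claim that $\Lambda_r$ is itself alternating is both unnecessary and generally false. \Cref{prop:uniqueisotopies} applies to arbitrary reduced Legendrians, not just alternating ones, so you never need $\Lambda_r$ to carry an alternating coloring; you only need it to be reduced (which it clearly is). And in fact $\Lambda_r$ is typically \emph{not} alternating --- it is the braid-type presentation of the positroid Legendrian, and finding an alternating representative of it is precisely the content of \Cref{prop:Thurston} and \Cref{thm:wildcharvty}. Your proposed coloring (black for local maxima of rank, white for local minima, null otherwise) does satisfy the first three bullets of \Cref{def:ASD}, but the fourth bullet (no null region borders another null region) fails for general $r$: adjacent saddle-type regions of the rank function can share an edge. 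So drop the alternating claim for $\Lambda_r$ and the reference to \Cref{prop:Thurston}; invoking \Cref{prop:uniqueisotopies} alone already gives what you need.
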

\begin{proof}
First note that the components of $\Lambda_r$ determine a bijection between incoming and outgoing intersections of its front projection with the boundary of $D^2$, and this bijection determines $r$.  Thus given $\Gamma$ we define the associated cyclic rank matrix as the one corresponding to the boundary matching of $\Lambda_\Gamma$ (in the terminology of \cite{Po} the restriction that all boundary-adjacent vertices of $\Gamma$ are white means we need only consider matchings or permutations rather than decorated permutations). On the other hand, $\Lambda_r$ is clearly reduced and by \Cref{prop:uniqueisotopies} any reduced Legendrians with the same boundary matching are Legendrian isotopic.
\end{proof}

Moreover, Proposition \ref{prop:uniqueisotopies} tells us that 
the Legendrian $\Lambda_\Gamma$ 
admits a \emph{contractible} space of isotopies, fixed at the boundary of the disk, 
to the positroid Legendrian $\Lambda_r$.
Thus by \Cref{prop:GKS}, there is a canonical isomorphism of framed moduli spaces
$\cM_1^{fr}(\Lambda_\Gamma) \cong \cM_1^{fr}(\Lambda_r)$.  On the other hand, in \Cref{thm:positroidasmoduli} we gave 
a canonical identification $\cM_1^{fr}(\Lambda_r) \cong \Pi_r$ of the positroid stratum with the framed moduli space of the positroid Legendrian. 
If $L$ is the conjugate Lagrangian of $\Lambda_\Gamma$, we can compose this identification with the family
Floer theoretic morphism $Loc^{fr}_{\:1}(L) \into \cM_1^{fr}(\Lambda_\Gamma)$ to 
obtain a toric chart on the positroid stratum.  Finally, recalling that $L$ retracts to the graph $\Gamma$,
we have the composition 
$$\im_\Gamma: Loc^{fr}_{\:1}(\Gamma) \cong Loc^{fr}_{\:1}(L) \into \cM_1^{fr}(\Lambda_\Gamma) \cong \cM_1^{fr}(\Lambda_r) \cong \Pi_r.$$
Here, the framing on $L$ or on $\Gamma$ is again a trivialization of each connected component
of the boundary. That is, $Loc^{fr}_{\:1}(\Gamma)$ is the algebraic torus $H^1(\Gamma,\partial \Gamma; \G_m)$, where $\partial \Gamma = \Gamma \cap \partial \cD$. We also implicitly use the standard trivialization of \Cref{def:coords} to define $\im_\Gamma$.

On the other hand, the motivation for considering reduced plabic graphs in \cite{Po} is that each gives rise to a \textbf{boundary measurement map}
\[
\Meas_{\Gamma}: Loc^{fr}_{\:1}(\Gamma) \into \Pi_r.
\]
We recall the definition of $\Meas_{\Gamma}$ as reformulated by the main result of \cite{Ta}.  Fix a cyclically-ordered labeling of the boundary vertices of $\Gamma$ by $\{1,\dotsc,n\}$.  Orient $\Gamma$ so that every white vertex has exactly one incoming edge and every black vertex has exactly one outgoing edge; following \cite{Po} this is called a perfect orientation of $\Gamma$.  Let $I \subset [1,n]$ be the subset of incoming boundary vertices of $\Gamma$, which necessarily has $k$ elements.  If $J \subset [1,n]$ is any other $k$-element subset, we say a flow from $I$ to $J$ is a collection of disjoint self-avoiding oriented cycles in $\Gamma$, relative to $\partial \Gamma$, such that each nonclosed cycle connects a boundary vertex in $I$ to a boundary vertex in $J$.  

Each flow $F$ gives rise to a function on $Loc^{fr}_{\:1}(\Gamma)$, which by a slight abuse we also denote by $F$.  Then $\Meas_{\Gamma}$ is defined by the condition that the pullback of the $J$th Pl{\"u}cker coordinate to $Loc^{fr}_{\:1}(\Gamma)$ is 
\begin{equation}\label{eq:bmm}
\Meas_{\Gamma}^*\Delta_J = \sum_{F:I \to J}F,
\end{equation}
where the sum is over all flows from $I$ to $J$.  This definition turns out to be independent of the choice of perfect orientation (of course, it is only the \emph{ratios} of Pl\"ucker coordinates that are meaningful, and changing the orientation may rescale all of them by a common factor).  Note also that the labels of the boundary vertices are used to determine the sign of $\Meas_{\Gamma}^*\Delta_J$.

We want to compare $\im_{\Gamma}$ and $\Meas_{\Gamma}$, but we can see already that we can only expect them to agree up to certain signs. For example, the definition of $\Meas_{\Gamma}$ implicitly depends on how the boundary vertices are labeled by $1,\dotsc, n$, since this ordering is needed to fix the signs of Pl\"ucker coordinates in \Cref{eq:bmm}. The definition of $\im_{\Gamma}$, on the other hand, is manifestly independent of the boundary labels. This is related to the fact that $\im_\Gamma$ transforms by a \emph{signed} cluster transformation under square moves (it is defined using standard face coordinates, so \Cref{thm:clustertrans} applies). The maps $\im_\Gamma$ do not naturally define a positive locus in $\Gr(k,n)$, and we cannot expect them to given that they are cyclically invariant: when $k$ is even the usual positive part of $\Gr(k,n)$ is itself not cyclically invariant. 


\begin{theorem}
\label{thm:positroid}
Let $\Gamma$ be a reduced plabic graph and $\Pi_r$ the associated positroid stratum.  The maps $\Meas_{\Gamma}$ and $\im_{\Gamma}$ coincide up to signs of Pl\"ucker coordinates.
\end{theorem}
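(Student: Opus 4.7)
The plan is to compute $\im_{\Gamma}$ explicitly in a matrix representative of $\Pi_r$, expand the resulting Pl\"ucker coordinates combinatorially, and match the expansion term-by-term with Postnikov's formula $\Meas_{\Gamma}^*\Delta_J = \sum_{F:I \to J}F$. The key inputs are \Cref{prop:uniqueisotopies} (the isotopy $\Lambda_\Gamma \to \Lambda_r$ is essentially unique, so $\im_\Gamma$ is unambiguously defined), \Cref{thm:charts} (alternating sheaves for $\Lambda_\Gamma$ are exactly local systems on $\Gamma$), \Cref{thm:positroidasmoduli} (a framed sheaf on $\Lambda_r$ is exactly a matrix representative of a point in $\Pi_r$), and the explicit sheaf-theoretic description of Reidemeister moves from \Cref{sec:contactinv}.

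First, I would fix a perfect orientation on $\Gamma$ determined by the subset $I \subset [1,n]$ of incoming boundary vertices, and encode a framed rank-one local system on $\Gamma$ as a collection of invertible edge weights $w_e$. Using the trivialization of \Cref{def:coords}, this data determines an alternating sheaf $\cF \in \cC_1(\Lambda_\Gamma,\sigma;\coeffs)$ whose stalks on faces are identified with $\coeffs$ and whose generization maps across edges are given by $w_e$. I would then factor the isotopy $\Lambda_\Gamma \to \Lambda_r$ into a sequence of Reidemeister-II and -III moves — this is possible because the isotopy is fixed at the boundary and the space of such isotopies is contractible — and trace the image of $\cF$ under the corresponding composition of reflection functors described in \Cref{prop:reid}. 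Passing to the matrix representative in $\Pi_r$ via the procedure in the proof of \Cref{thm:positroidasmoduli}, the net effect is that the $i$th column (for $i$ an incoming boundary vertex in $I$) is the standard basis vector $e_i$, while for $i \notin I$ the $i$th column is a linear combination $\sum_j c_{ij} e_j$ over $j \in I$, where $c_{ij}$ is a sum over oriented paths in $\Gamma$ from boundary vertex $i$ to boundary vertex $j$, weighted by the product of the edge weights $w_e$ along the path.

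Given this description, the Pl\"ucker coordinate $\Delta_J$ for $J$ a $k$-subset is a $k \times k$ minor of the matrix, and its Leibniz expansion is a signed sum over $k$-tuples of paths, each going from an incoming vertex to a vertex of $J$. The main step — and the expected main obstacle — is to show that non-disjoint tuples of paths cancel in pairs, leaving only contributions from disjoint tuples, i.e., honest flows $F: I \to J$. This is a Lindstr\"om–Gessel–Viennot style cancellation and I expect it to follow by exhibiting an involution on non-disjoint path tuples that swaps two paths at their first common vertex, flipping the sign of the permutation involved. The reducedness hypothesis on $\Gamma$ is crucial here to guarantee that each path can be unambiguously decomposed in this way.

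The remaining work is tracking signs. The signs in the Leibniz expansion of $\Delta_J$ arise from the permutation of rows, while Postnikov's $\Meas_\Gamma^*\Delta_J$ carries signs from the cyclic labeling of boundary vertices; meanwhile $\im_\Gamma$ brings in additional signs from the spin/pin structure on $L$, the Maslov potential, and the microlocal conventions of \Cref{def:coords}. I do not expect these to agree on the nose — indeed the signed versus unsigned cluster transformations of \Cref{thm:clustertrans} and the fact that $\im_\Gamma$ is manifestly invariant under cyclic relabeling (whereas $\Meas_\Gamma$ is not) both signal this discrepancy. What must be verified is that the discrepancy is consistent across $\Loc^{fr}_{\:1}(\Gamma)$ for each fixed $J$, which is clear from the explicit formulas above since every flow $F: I \to J$ appears with the same Leibniz sign (depending only on the induced boundary bijection determined by $r$, not on $F$). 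Thus $\im_\Gamma^*\Delta_J$ and $\Meas_\Gamma^*\Delta_J$ differ by a single overall sign $\epsilon_J \in \{\pm 1\}$, proving the theorem.
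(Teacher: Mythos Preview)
Your approach is genuinely different from the paper's. The paper does not attempt any direct computation of $\im_\Gamma$ for a general graph. Instead it argues inductively: it invokes the fact (from \cite{ABCGPT}) that $\Meas_\Gamma$, as $\Gamma$ ranges over all reduced plabic graphs, is uniquely determined by two recursion relations --- compatibility with the direct sum map $\Gr(k_1,n_1)\times\Gr(k_2,n_2)\to\Gr(k_1+k_2,n_1+n_2)$ when $\Gamma$ is a disjoint union, and compatibility with a projection $\Pi_r\to\Pi_{r'}$ when two adjacent boundary vertices are glued. The paper then checks that $\im_\Gamma$ satisfies the same two recursions (up to signs), reducing the theorem to the explicit verification of the base cases $\Gr(1,3)$ and $\Gr(2,3)$. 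This sidesteps entirely the problem of computing the image of a general alternating sheaf under a long chain of Reidemeister moves.

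Your proposal has a real gap at exactly that point. You assert that after tracing $\cF$ through the composition of reflection functors, the $i$th column of the resulting matrix representative is a weighted sum over oriented paths in $\Gamma$ from boundary vertex $i$ to the sources in $I$. But \Cref{prop:reid} only tells you what a single Reidemeister move does; the claim that an arbitrary composition of such moves yields a path-counting formula is precisely the content you need to prove, and you give no mechanism for establishing it. The isotopy $\Lambda_\Gamma\to\Lambda_r$ has no canonical Reidemeister factorization, and there is no evident induction on the number of moves that produces your formula. Moreover, a perfectly oriented reduced plabic graph can have directed cycles, so ``sum over oriented paths'' is not even a finite expression in general; this is why Postnikov's original boundary measurements are rational functions and why Talaska's reformulation \cite{Ta} involves flows with closed cycles rather than bare paths. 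Your Lindstr\"om--Gessel--Viennot cancellation step, and the accompanying claim that all surviving flows carry the same Leibniz sign, is therefore essentially a re-derivation of \cite{Ta} in a setting where the standard LGV hypotheses (acyclicity) fail. None of this is insurmountable, but it is substantially more work than your sketch acknowledges, and the paper's recursive argument avoids all of it.
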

\begin{proof}
The main idea is that as $\Gamma$ ranges over the set of all reduced plabic graphs of all positroid strata, $\Meas_{\Gamma}$ is determined by certain ``recursion relations'' with respect to direct sum and projections \cite[Section 4.4]{ABCGPT}.  Thus it suffices to show that $\im_{\Gamma}$ satisfies the same relations (up to signs), and to verify the theorem by hand in the trivial cases when $\Pi_r$ is the open stratum of $\Gr(1,3)$ or $\Gr(2,3)$.

For any pair of Grassmannians there is a direct sum map $\Gr(k_1,n_1) \times \Gr(k_2,n_2) \to \Gr(k_1+k_2,n_1+n_2)$ which on $\coeffs$-points acts by taking $(\coeffs^{k_1} \onto E_1, \coeffs^{k_2} \onto E_2)$ to $\coeffs^{k_1+k_2} \onto E_1 \oplus E_2$ (here $E_1$, $E_2$ are locally free modules of ranks $n_1$, $n_2$).  Let $\Gamma_1$ and $\Gamma_2$ be two reduced plabic graphs with associated positroid strata $\Pi_{r_1} \subset \Gr(k_1,n_1)$, $\Pi_{r_2} \subset \Gr(k_2,n_2)$.  Let $\Gamma_3$ denote the reduced plabic graph which is the disjoint union of $\Gamma_1$ and $\Gamma_2$, with boundary vertices labeled so that those from $\Gamma_1$ retain their original labels while those from $\Gamma_2$ have $n_1$ added to their labels.  Let $\Pi_{r_3} \subset \Gr(k_1+k_2,n_1+n_2)$ be the positroid stratum associated with $\Gamma_3$; it is the image of $\Pi_{r_1} \times \Pi_{r_2}$ under the direct sum map.  There is an obvious isomorphism $Loc^{fr}_{\:1}(\Gamma_1) \times Loc^{fr}_{\:1}(\Gamma_2) \cong Loc^{fr}_{\:1}(\Gamma_3)$.
The boundary measurement map $\Meas_{\Gamma_3}$ is determined by $\Meas_{\Gamma_1}$, $\Meas_{\Gamma_2}$ in the sense that the following diagram commutes (note that the indexing prescription on $\Gamma_3$ fixes the signs of Pl\"ucker coordinates in the bottom map):
\[
\begin{tikzpicture}
  \matrix (m) [matrix of math nodes,row sep=3em,column sep=4em,minimum width=2em]
  {
     Loc^{fr}_{\:1}(\Gamma_1) \times Loc^{fr}_{\:1}(\Gamma_2) & \Pi_{r_1} \times \Pi_{r_2} \\
     Loc^{fr}_{\:1}(\Gamma_3) & \Pi_{r_3} \\};
  \path[-stealth]
    (m-1-1) edge (m-2-1)
            edge node [above] {$\Meas_{\Gamma_1} \times \Meas_{\Gamma_2}$} (m-1-2)
    (m-2-1) edge node [below] {$\Meas_{\Gamma_3}$} (m-2-2)
    (m-1-2) edge (m-2-2);
\end{tikzpicture}
\]

On the other hand, if we replace the boundary measurement maps above by their counterparts $\im_{\Gamma_1}$, $\im_{\Gamma_2}$, $\im_{\Gamma_3}$, then the above diagram still commutes. It suffices to show that the direct sum map corresponds to the isotopy isomorphism $\cM^{fr}_{\:1}(\Lambda_{r_1}) \times \cM^{fr}_{\:1}(\Lambda_{r_2}) \congto \cM^{fr}_{\:1}(\Lambda_{r_3})$ under the identification of \Cref{thm:positroidasmoduli} (by \Cref{prop:uniqueisotopies} there is a unique such isomorphism, and this uniqueness forces the diagram to commute).  This follows from the appearance of direct sums in the Reidemeister-II move (see \Cref{fig:reid}): the isotopy relates the maximal-rank stalk of a sheaf in $\cM^{fr}_{\:1}(\Lambda_{r_3})$ to the maximal rank stalks of sheaves in $\cM^{fr}_{\:1}(\Lambda_{r_1})$, $\cM^{fr}_{\:1}(\Lambda_{r_2})$ by a sequence of Reidemeister-II's.  This determines the map of positroid strata since by the construction of \Cref{thm:positroidasmoduli} these stalks and the maps they receive from the boundary stalks determine the maps $\im_{\Gamma_1}$, $\im_{\Gamma_2}$, $\im_{\Gamma_3}$.  See \Cref{fig:directsum} for an example.

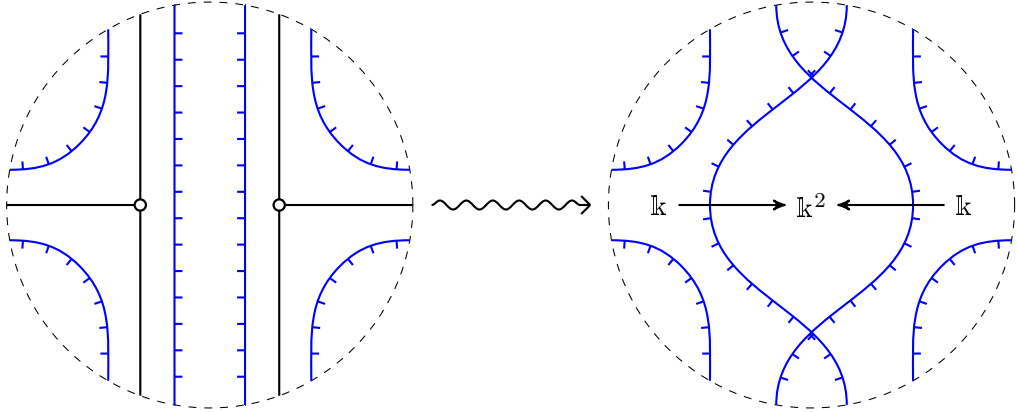
\begin{figure}
\centering
\begin{tikzpicture}
\newcommand*{\off}{10}; \newcommand*{\rad}{2.7}; \newcommand*{\ang}{70}; \newcommand*{\fac}{.3};
\node (l) [matrix] at (0,0) {
\draw[dashed] (0,0) circle (\rad);
\coordinate (tl) at (180-\ang:\rad); \coordinate (bl) at (180+\ang:\rad);
\coordinate (tr) at (\ang:\rad); \coordinate (br) at (-\ang:\rad);
\coordinate (lv) at ($(tl)!.5!(bl)$); \coordinate (rv) at ($(tr)!.5!(br)$);
\draw[graphstyle] (tl) to (bl); \draw[graphstyle] (tr) to (br); \draw[graphstyle] (180:\rad) to (lv); \draw[graphstyle] (0:\rad) to (rv);
\draw[asdstyle,lefthairs] (180-\off:\rad) to[in=180+45,out=0] ($(lv)+(-\fac*\rad,\fac*\rad)$) to[in=-90,out=45] (180-\ang+\off:\rad);
\draw[asdstyle,righthairs] (180+\off:\rad) to[out=0,in=180-45] ($(lv)+(-\fac*\rad,-\fac*\rad)$) to[out=-45,in=90] (180+\ang-\off:\rad);
\draw[asdstyle,lefthairs] (-\off:\rad) to[in=45,out=180] ($(rv)+(\fac*\rad,-\fac*\rad)$) to[in=90,out=180+45] (-\ang+\off:\rad);
\draw[asdstyle,righthairs] (\off:\rad) to[out=180,in=-45] ($(rv)+(\fac*\rad,\fac*\rad)$) to[out=180-45,in=-90] (\ang-\off:\rad);
\draw[asdstyle,lefthairs] ($(180+\ang+\off:\rad)!.5!(180-\ang-\off:\rad)$) to (180+\ang+\off:\rad);
\draw[asdstyle,righthairs] ($(180+\ang+\off:\rad)!.5!(180-\ang-\off:\rad)$) to (180-\ang-\off:\rad);
\draw[asdstyle,lefthairs] ($(\ang+\off:\rad)!.5!(-\ang-\off:\rad)$) to (\ang+\off:\rad);
\draw[asdstyle,righthairs] ($(\ang+\off:\rad)!.5!(-\ang-\off:\rad)$) to (-\ang-\off:\rad);
\foreach \c in {lv,rv} {\fill[black] (\c) circle (\bvertrad); \fill[white] (\c) circle (\wvertrad);}\\};
\node (r) [matrix] at (8,0) {
\draw[dashed] (0,0) circle (\rad);
\coordinate (tl) at (180-\ang:\rad); \coordinate (bl) at (180+\ang:\rad);
\coordinate (tr) at (\ang:\rad); \coordinate (br) at (-\ang:\rad);
\coordinate (lv) at ($(tl)!.5!(bl)$); \coordinate (rv) at ($(tr)!.5!(br)$);
\draw[asdstyle,lefthairs] (180-\off:\rad) to[in=180+45,out=0] ($(lv)+(-\fac*\rad,\fac*\rad)$) to[in=-90,out=45] (180-\ang+\off:\rad);
\draw[asdstyle,righthairs] (180+\off:\rad) to[out=0,in=180-45] ($(lv)+(-\fac*\rad,-\fac*\rad)$) to[out=-45,in=90] (180+\ang-\off:\rad);
\draw[asdstyle,lefthairs] (-\off:\rad) to[in=45,out=180] ($(rv)+(\fac*\rad,-\fac*\rad)$) to[in=90,out=180+45] (-\ang+\off:\rad);
\draw[asdstyle,righthairs] (\off:\rad) to[out=180,in=-45] ($(rv)+(\fac*\rad,\fac*\rad)$) to[out=180-45,in=-90] (\ang-\off:\rad);
\draw[asdstyle,lefthairs] (.5*\rad,0) to[in=90,out=-90] (180+\ang+\off:\rad);
\draw[asdstyle,righthairs] (.5*\rad,0) to[out=90,in=-90] (180-\ang-\off:\rad);
\draw[asdstyle,lefthairs] (-.5*\rad,0) to[in=-90,out=90] (\ang+\off:\rad);
\draw[asdstyle,righthairs] (-.5*\rad,0) to[out=-90,in=90] (-\ang-\off:\rad);
\node (ls) at (-.75*\rad,0) {$\coeffs$}; \node (rs) at (.75*\rad,0) {$\coeffs$}; \node (cs) at (0,0) {$\coeffs^2$};
\draw[genmapstyle] (ls) to (cs); \draw[genmapstyle] (rs) to (cs);\\};
\coordinate (rw) at ($(r.west)+(-1mm,0)$); \coordinate (le) at ($(l.east)+(1mm,0)$);
\draw[arrowstyle] (le) -- (rw);
\draw[arrhdstyle] ($(rw)+(-1.2mm,1.2mm)$) -- (rw); \draw[arrhdstyle] ($(rw)+(-1.2mm,-1.2mm)$) -- (rw);
\end{tikzpicture}
\caption{The isotopy corresponding to the direct sum map when $\Gamma_1$ and $\Gamma_2$ have a single trivalent white vertex, so $\Pi_{r_1} = \Pi_{r_2}$ is the big positroid stratum in $\Gr(1,3)$.  The framed moduli space of the left picture is manifestly isomorphic $\Pi_{r_1} \times \Pi_{r_2}$, applying the construction of \Cref{thm:positroidasmoduli} separately to its left and right halves.  The framed moduli space of the right picture is manifestly isomorphic to a positroid stratum $\Pi_{r_3}$ in $\Gr(2,6)$ where the first three (and last three) columns of any matrix representative are pairwise linearly dependent.  The crossing conditions on the right assert that the stalk of a sheaf in the middle region is canonically identified with the direct sum of a stalk from the left region and the right region.}
\label{fig:directsum}
\end{figure}

Now let $r$ be a cyclic rank matrix of type $(k,n)$ such that $r_{12} = 2$, and let $\Gamma$ be a reduced plabic graph for $r$.  Assume that the bicolored graph $\Gamma'$ obtained by gluing boundary vertices 1 and 2 together is again a reduced plabic graph, and let $r'$ be its associated cyclic rank matrix.  
We have a projection map $\Pi_r \to \Pi_{r'} \subset \Gr(k-1,n-2)$ which on $\coeffs$-points take $\coeffs^n \onto E$ to $\coeffs^{n-2} \onto E/\langle v_1-v_2\rangle$, where $v_1$, $v_2$ are the images of $1$ in the first two factors of $\coeffs^n$.   There is a natural map $Loc^{fr}_{\:1}(\Gamma) \onto Loc^{fr}_{\:1}(\Gamma')$, since the framings let us identify the stalks at boundary vertices 1 and 2 of a framed local system on $\Gamma$.  The boundary measurement map $\Meas_{\Gamma'}$ is determined by $\Meas_{\Gamma}$ in the sense that the following diagram commutes:
\[
\begin{tikzpicture}
  \matrix (m) [matrix of math nodes,row sep=3em,column sep=4em,minimum width=2em]
  {
     Loc^{fr}_{\:1}(\Gamma) & \Pi_r \\
     Loc^{fr}_{\:1}(\Gamma') & \Pi_{r'} \\};
  \path[-stealth]
    (m-1-1) edge (m-2-1)
            edge node [above] {$\Meas_{\Gamma}$} (m-1-2)
    (m-2-1) edge node [below] {$\Meas_{\Gamma'}$} (m-2-2)
    (m-1-2) edge (m-2-2);
\end{tikzpicture}
\]

As above, we claim the diagram still commutes up to signs of Pl\"ucker coordinates after replacing the boundary measurement maps by $\im_{\Gamma}$, $\im_{\Gamma'}$.  In terms of alternating Legendrians, gluing vertices 1 and 2 of $\Gamma$ together corresponds to ``capping off'' the front projection of $\Lambda_\Gamma$ with two strands going outside the disk, then pulling the cap back inside the disk.  Let $\widehat{\Lambda}_r$ be the Legendrian obtained from $\Lambda_r$ by capping off its front projection in the same way.  There is a natural map $\cM^{fr}_{\:1}(\Lambda_r) \to \cM^{fr}_{\:1}(\widehat{\Lambda}_r)$ constructed the same way as the gluing map $Loc^{fr}_{\:1}(\Gamma) \onto Loc^{fr}_{\:1}(\Gamma')$.    It suffices to show that the projection map corresponds to the composition of this with the isotopy isomorphism $\cM^{fr}_{\:1}(\widehat{\Lambda}_r) \to \cM^{fr}_{\:1}(\Lambda_{r'})$ under the identification of \Cref{thm:positroidasmoduli} (as in the direct sum case, by \Cref{prop:uniqueisotopies} there is a unique such isomorphism, and this uniqueness forces the diagram to commute).  

\begin{figure}
\centering
\begin{tikzpicture}
\newcommand*{\off}{10}; \newcommand*{\rad}{2.7}; \newcommand*{\ang}{70}; \newcommand*{\fac}{.3};
\node (l) [matrix] at (0,0) {
\draw[dashed] (0,0) circle (\rad);
\coordinate (tl) at (180-\ang:\rad); \coordinate (bl) at (180+\ang:\rad);
\coordinate (tr) at (\ang:\rad); \coordinate (br) at (-\ang:\rad);
\coordinate (lv) at ($(tl)!.5!(bl)$); \coordinate (rv) at ($(tr)!.5!(br)$);
\draw[asdstyle,righthairs] (180+\off:\rad) to[out=0,in=180-45] ($(lv)+(-\fac*\rad,-\fac*\rad)$) to[out=-45,in=90] (180+\ang-\off:\rad);
\draw[asdstyle,lefthairs] (-\off:\rad) to[in=45,out=180] ($(rv)+(\fac*\rad,-\fac*\rad)$) to[in=90,out=180+45] (-\ang+\off:\rad);
\draw[asdstyle,lefthairs] (0,1.3*\rad) to[in=90,out=0] (\ang-\off:\rad) to[in=180-45,out=-90] ($(rv)+(\fac*\rad,\fac*\rad)$) to[in=180,out=-45] (\off:\rad);
\draw[asdstyle,righthairs] (0,1.3*\rad) to[out=180,in=90] (180-\ang+\off:\rad)  to[out=-90,in=45] ($(lv)+(-\fac*\rad,\fac*\rad)$) to[out=180+45,in=0] (180-\off:\rad);
\draw[asdstyle,lefthairs] (0,1.1*\rad) to[out=180,in=90] (180-\ang-\off:\rad) to[in=90,out=-90] (.5*\rad,0) to[in=90,out=-90] (180+\ang+\off:\rad);
\draw[asdstyle,righthairs] (0,1.1*\rad) to[out=0,in=90] (\ang+\off:\rad) to[out=-90,in=90] (-.5*\rad,0) to[out=-90,in=90] (-\ang-\off:\rad);
\node (ls) at (-.75*\rad,0) {$\coeffs$}; \node (rs) at (.75*\rad,0) {$\coeffs$}; \node (cs) at (0,0) {$\coeffs^2$};
\draw[genmapstyle] (ls) to (cs); \draw[genmapstyle] (rs) to (cs);\\};
\node (r) [matrix] at (8.1,-\rad*.15) {
\draw[dashed] (0,0) circle (\rad);
\coordinate (tl) at (180-\ang:\rad); \coordinate (bl) at (180+\ang:\rad);
\coordinate (tr) at (\ang:\rad); \coordinate (br) at (-\ang:\rad);
\coordinate (lv) at ($(tl)!.5!(bl)$); \coordinate (rv) at ($(tr)!.5!(br)$);
\draw[asdstyle,righthairs] (180+\off:\rad) to[out=0,in=180-45] ($(lv)+(-\fac*\rad,-\fac*\rad)$) to[out=-45,in=90] (180+\ang-\off:\rad);
\draw[asdstyle,lefthairs] (-\off:\rad) to[in=45,out=180] ($(rv)+(\fac*\rad,-\fac*\rad)$) to[in=90,out=180+45] (-\ang+\off:\rad);
\draw[asdstyle,lefthairs] (0,.7*\rad) to[in=180,out=0] (\off:\rad);
\draw[asdstyle,righthairs] (0,.7*\rad) to[out=180,in=0] (180-\off:\rad);
\draw[asdstyle,lefthairs] (0,-.3*\rad) to[in=90,out=180] (180+\ang+\off:\rad);
\draw[asdstyle,righthairs] (0,-.3*\rad) to[out=0,in=90] (-\ang-\off:\rad);
\node (cs) at (0,.2*\rad) {$\coeffs$};\\};
\coordinate (rw) at ($(r.west)+(-1mm,0)$); \coordinate (le) at ($(l.east)+(1mm,-.15*\rad)$);
\draw[arrowstyle] (le) -- (rw);
\draw[arrhdstyle] ($(rw)+(-1.2mm,1.2mm)$) -- (rw); \draw[arrhdstyle] ($(rw)+(-1.2mm,-1.2mm)$) -- (rw);
\end{tikzpicture}
\caption{The isotopy corresponding to the projection map when $\Pi_r$ is the positroid stratum from the right picture of \Cref{fig:directsum}.  The starting point is to cap off $\Lambda_r$ to obtain the Legendrian $\widehat{\Lambda}_r$ whose front projection is on the left; there is a canonical map from $\cM^{fr}_{\:1}(\Lambda_r)$ to $\cM^{fr}_{\:1}(\widehat{\Lambda}_r)$.  When we isotope to the right hand side the rank two region in the middle is replaced by a rank one region where the boundary stalks from the left and right sides of the picture are identified up to a scalar.  Under the correspondence of \Cref{thm:positroidasmoduli} this is exactly the projection map from $\Pi_r$ to $\Pi_{r'}$}
\label{fig:capping}
\end{figure}
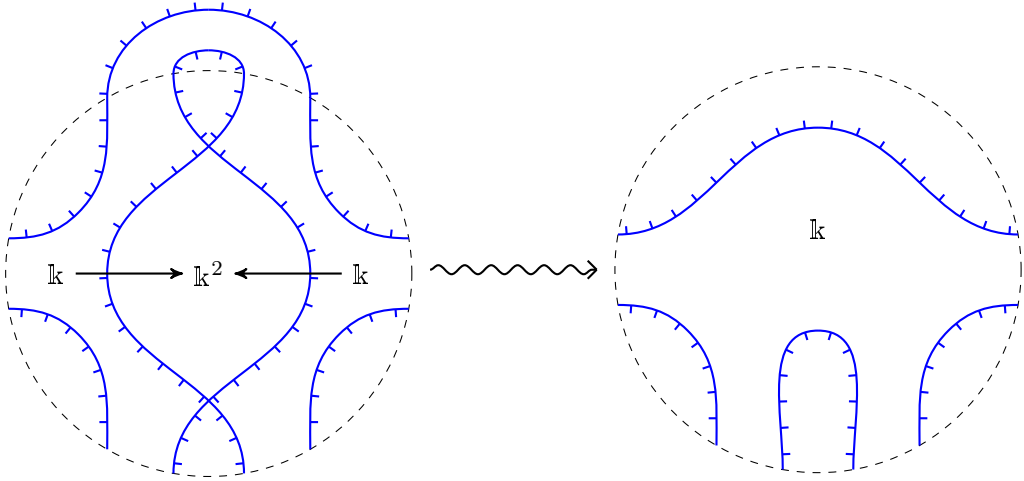

Adding the cap to $\Lambda_r$, however, exactly imposes the relation that the boundary stalks of a sheaf at vertices 1 and 2 are identified under generization maps into the disk.  Since $\Gamma'$ is reduced and boundary stalks $F_1$, $F_2$ have distinct images in $F_x$, the innermost strands of $\Lambda_r$ that are glued together in $\widehat{\Lambda}_r$ are distinct and cross twice.  The isotopy between $\widehat{\Lambda}_r$ and $\Lambda_{r'}$ pulls the inner part of the cap through the picture, a series of Reidemeister-III moves, and then pulls it apart by a Reidemeister-II.  The resulting isotopy isomorphism acts on maximal-rank stalks exactly by the projection map; see \Cref{fig:capping} for a simple example.

Finally, we consider the base cases of $\Gr(1,3)$ and $\Gr(2,3)$.  For the former there is essentially nothing to show, so we only explicitly discuss the latter.  There is only one reduced plabic graph, a trivalent black vertex connected to three white vertices numbered $1$, $2$, and $3$ along the boundary.  Let $X_{12}$, $X_{23}$, $X_{31}$ denote the face holonomies on
$Loc^{fr}_{\:1}(\Gamma)$, i.e. $X_{12}$ is the parallel transport from the trivialized stalk at vertex 1 to the one at vertex 2.  With the perfect orientation such that 1 is a sink, 2 and 3 sources, we have 
\[
\Meas_{\Gamma}^*\Delta_{12} = X_{31}, \quad \Meas_{\Gamma}^*\Delta_{13} = X_{12}^{-1}, \quad \Meas_{\Gamma}^*\Delta_{23} = 1.
\]
On the other hand, $\im_{\Gamma}$ is determined by the invariance of microlocalization under Legendrian isotopy, i.e. \Cref{lem:R3}.  In standard face coordinates we compute that
\[
\im_{\Gamma}^*\Delta_{12} = X_{31}, \quad \im_{\Gamma}^*\Delta_{13} = -X_{12}^{-1}, \quad \im_{\Gamma}^*\Delta_{23} = 1.
\]
This agrees with $\Meas_{\Gamma}$ up to signs, completing the proof.
\end{proof}
\section{Distinguishing fillings} 
\label{sec:rainspir}

\newcommand\Tstrut{\rule{0pt}{2.6ex}}         
\newcommand\Bstrut{\rule[-1.2ex]{0pt}{0pt}}   

A fundamental problem in symplectic geometry is classifying Lagrangians up to Hamiltonian isotopy. Because the sheaf category is invariant under Hamiltonian isotopy it can be used to approach this problem \cite{GKS,Gui,Tam,N}. 
In this section we observe that our results thus far allow us to package information about the classification of exact Lagrangian fillings of Legendrian knots into structures of cluster algebra.  
We also explain how results about alternating Legendrians in $T^\infty \R^2$ (which necessarily have nontrivial winding number around the fibers) lead to results about Legendrians in $\R^3$. This provides, for example, new combinatorial constructions of inequivalent exact fillings of many Legendrian links in $\R^3$, as well as information about how these fillings are related by surgery.

\vspace{2mm}

Fix some Legendrian $\Lambda$, and let $\Lambda_\alpha$ be a collection of
alternating Legendrians equipped with Legendrian isotopies to $\Lambda$.  
Let $L_\alpha$ be the exact filling of $\Lambda$ obtained by Hamiltonian isotopy from the conjugate Lagrangian filling of $\Lambda_\alpha$. 
Assume in addition that the 
various $\Lambda_\alpha$ can be isotoped to each other via square moves. 
Then it follows from our results that the comparison of charts amongst the
$Loc_1(L_\alpha)$ are governed by cluster transformation rules computable 
from the dual quiver to the bicolored graph determining any one of the 
$\Lambda_\alpha$.  These rules, and in particular the question of whether
two such charts are the same, have received extensive study in the combinatorial
literature. We have the following consequence of the quantization results of \cite{GKS} and \cite[\S 3.19]{JinTreumann}.


\begin{proposition}\label{prop:distinguish}
In the above setting, if $L_\alpha$ is Hamiltonian isotopic (fixing the boundary) to
$L_\beta$, then the induced rational morphism $Loc_1(L_\alpha) \dashrightarrow 
Loc_1(L_\beta)$ is a regular isomorphism. 
\end{proposition}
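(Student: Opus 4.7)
The approach is to promote the statement to an equality of open substacks of $\cM_1(\Sigma, \Lambda)$: once we know that $\Loc_1(L_\alpha)$ and $\Loc_1(L_\beta)$ are open in $\cM_1(\Sigma,\Lambda)$ and that their images under these open embeddings coincide, the induced rational map $\Loc_1(L_\alpha) \dashrightarrow \Loc_1(L_\beta)$ must be the composition of two open immersions with the same image, hence a regular isomorphism.

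The first ingredient is already in place: by \Cref{prop:open}, once we fix brane structures on $L_\alpha$ and $L_\beta$ (e.g.\ as in \Cref{prop:fukobj}), the family Floer/microlocalization functor $\cN$ of \Cref{thm:microloc} induces open inclusions
\[
\Loc_1(L_\alpha) \hookrightarrow \cM_1(\Sigma,\Lambda), \qquad \Loc_1(L_\beta) \hookrightarrow \cM_1(\Sigma,\Lambda).
\]
By construction in the statement of the proposition, the rational map $\Loc_1(L_\alpha)\dashrightarrow\Loc_1(L_\beta)$ is precisely the one obtained from the birational identification of the two charts inside $\cM_1(\Sigma,\Lambda)$.

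The key step is to check that a Hamiltonian isotopy from $L_\alpha$ to $L_\beta$ fixing the boundary $\Lambda$ forces these two open substacks to coincide pointwise. I plan to argue as follows. Such an isotopy $\Phi_t\colon T^*\Sigma \to T^*\Sigma$, stationary at infinity on $\Lambda$, acts on $\Fuk_\Lambda(T^*\Sigma)$ by an autoequivalence canonically isomorphic to the identity; concretely, each rank-one brane $(L_\alpha,\cE)$ is carried to a rank-one brane $(L_\beta,\Phi_{1,*}\cE)$ that is quasi-isomorphic to the original object of $\Fuk_\Lambda(T^*\Sigma)$. (Different choices of pin structure induced by the isotopy at worst twist $\cE$ by a $\Z/2\Z$-line bundle, which is absorbed into an automorphism of $\Loc_1$ per \Cref{rmk:pin}.) Passing through the equivalence $\Fuk_\Lambda(T^*\Sigma) \cong \Sh_\Lambda(\Sigma)$ of \Cref{thm:microloc} and taking moduli, the images of $\Loc_1(L_\alpha)$ and $\Loc_1(L_\beta)$ in $\cM_1(\Sigma,\Lambda)$ agree, and the induced rational map is the identity on the common image. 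Combined with openness from \Cref{prop:open}, this yields the desired regular isomorphism.

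The main obstacle is the second step, namely promoting the well-known statement that Hamiltonian isotopy acts trivially on the Fukaya category to a statement about the full derived moduli stack $\cM_1(\Sigma,\Lambda)$ with arbitrary coefficient ring $\coeffs$. One must coherently transport the Maslov grading, relative pin structure, and the family of tame perturbations that enter \Cref{prop:fukobj} along the isotopy, and verify that the resulting equivalence of subcategories is natural in families over $\Spec \coeffs$. The first two data are purely topological and transport without difficulty; for the third one uses that $L_\alpha$ and $L_\beta$ are both tame and that tameness is preserved by compactly-supported Hamiltonian isotopy, so their families of tame perturbations may be interpolated. Once these verifications are in place, naturality of $\cN$ with base change reduces the claim to the pointwise statement above.
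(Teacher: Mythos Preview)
Your proposal is correct and is essentially the same argument the paper has in mind. The paper gives no explicit proof, declaring the statement ``immediate from our Floer-theoretic interpretation of the comparison between the $Loc_1(L_\alpha)$''; what you have written is exactly the unpacking of that phrase: both $\Loc_1(L_\alpha)$ and $\Loc_1(L_\beta)$ sit as open charts in $\cM_1(\Sigma,\Lambda)$ via \Cref{prop:open}, Hamiltonian isotopy fixing the boundary carries branes on $L_\alpha$ to quasi-isomorphic branes on $L_\beta$, hence the two charts have the same image and the induced birational map is a regular isomorphism. The additional care you take with transporting brane data and with naturality in $\coeffs$ is more than the paper demands, but does no harm.
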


We can apply the above notion to any class of links which have alternating representatives. 
For example, let $\bigcirc \subset T^\infty \R^2$ be a co-circle, $\beta$ a positive braid, and
$\Delta$ the half-twist.
\Cref{con:one} asserts that every word for $\beta$
gives rise to an alternating representative of the Legendrian satellite $\beta\Delta^2
\looparrowright \bigcirc$, but 
there are generally more.  If
$\beta = T_{k,n}$ is the $(k, n)$ torus braid, then $T_{k,n} \Delta^2 = 
T_{k, n+k}$ and 
$T_{k, n+k} \looparrowright \bigcirc$ 
is the braid corresponding to the big positroid stratum of $Gr(k, n+k)$. The enumeration of inequivalent reduced plabic graphs for a fixed positroid was studied in \cite{OPS}. In the case of the big stratum of $Gr(k, n+k)$, they are in bijection with maximal collections of pairwise weakly separated $k$-element subsets of $[1,n]$.  One says two $k$-element subsets $I, J \subset [1,n]$ are weakly separated if they can be cycically shifted so that every element of $I \smallsetminus (I \cap J)$ is less than every element of $J \smallsetminus (I \cap J)$.

\begin{proposition}\label{prop:numforpos}  
The link  $T_{k, n+k} \looparrowright \bigcirc$  admits a collection of exact Lagrangian fillings
into $T^* \R^2$ labeled by maximal pairwise weakly separated $k$-element subsets of $[1,k+n]$.  
No two are Hamiltonian isotopic. 
In particular, if $k = 2$ the number of distinct exact Lagrangian fillings is at least the Catalan number $C_{n}$.
\end{proposition}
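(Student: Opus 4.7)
The plan is to produce one exact filling $L_\mathcal{C}$ for each maximal pairwise weakly separated collection $\mathcal{C}$ by the conjugate Lagrangian construction, and then to distinguish their Hamiltonian-isotopy classes by invoking Proposition \ref{prop:distinguish} applied to the cluster tori they induce on the moduli space $\Pi_r \cong \cM^{fr}_{\:1}(\Lambda_r)$, which is the big positroid stratum of $\Gr(k, n+k)$.

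First I would invoke the main theorem of \cite{OPS}: maximal pairwise weakly separated $k$-subsets $\mathcal{C} \subset \binom{[1,k+n]}{k}$ are in bijection with reduced plabic graphs $\Gamma_\mathcal{C}$ for the big positroid stratum $\Pi_r$, where the faces of $\Gamma_\mathcal{C}$ carry the labels in $\mathcal{C}$. By Remark \ref{rmk:GM} together with Theorem \ref{thm:positroidasmoduli}, the Legendrian $T_{k,n+k} \looparrowright \bigcirc \subset T^\infty \R^2$ is, up to Legendrian isotopy, the positroid Legendrian $\beta_r$, whose restriction to a disk $D^2$ containing all its crossings is $\Lambda_r$. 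By Proposition \ref{prop:uniqueisotopies} the alternating Legendrian $\Lambda_{\Gamma_\mathcal{C}}$ is isotopic to $\Lambda_r$ through a contractible space of isotopies fixed at $\partial D^2$. Extending these isotopies by the identity across $\partial D^2$ and applying Proposition \ref{prop:HWL}, I obtain the canonical conjugate Lagrangian filling of $\Lambda_{\Gamma_\mathcal{C}}$; Hamiltonian isotopy along the GKS equivalence then yields an exact Lagrangian filling $L_\mathcal{C} \subset T^*\R^2$ of $T_{k,n+k} \looparrowright \bigcirc$.

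Next, to show that distinct collections yield non-Hamiltonian-isotopic fillings, I would apply Proposition \ref{prop:distinguish} to the induced inclusions $\Loc_1(L_\mathcal{C}) \hookrightarrow \cM^{fr}_{\:1}(\Lambda_r) \cong \Pi_r$. By Theorem \ref{thm:positroid} these charts agree up to signs with Postnikov's boundary measurement charts, and hence recover the toric charts of Scott's cluster algebra structure on $\Pi_r$, whose seeds are indexed exactly by the maximal weakly separated collections. Two distinct collections $\mathcal{C}_1 \neq \mathcal{C}_2$ are connected by a nontrivial sequence of square moves (again \cite{OPS}), and by Theorem \ref{thm:clustertrans} the composite rational map $\Loc_1(L_{\mathcal{C}_1}) \dashrightarrow \Loc_1(L_{\mathcal{C}_2})$ is the corresponding composition of cluster $\cX$-transformations. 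Since a non-trivial composition of such mutations introduces factors of the form $1 + z^{e_k}$ that cannot be canceled by monomial rescalings, this composite is not a regular isomorphism of tori, and Proposition \ref{prop:distinguish} rules out a Hamiltonian isotopy $L_{\mathcal{C}_1} \simeq L_{\mathcal{C}_2}$ of fillings. The $k = 2$ lower bound then reduces to the classical fact that the $A_{n-1}$ cluster algebra on $\Gr(2, n+2)$ has $C_n$ clusters \cite{FZ2}, equivalently that maximal collections of pairwise non-crossing $2$-subsets of $[1, n+2]$ are counted by $C_n$ via triangulations of the $(n+2)$-gon.

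The main obstacle is the non-regularity step in the previous paragraph: one must formally verify that the induced rational map between $\cX$-tori is never a monomial automorphism when $\mathcal{C}_1 \neq \mathcal{C}_2$. The cleanest route goes through the dual $\cA$-picture supplied by Theorem \ref{thm:positroid}: for any $J \in \mathcal{C}_1 \setminus \mathcal{C}_2$ the Pl\"ucker coordinate $\Delta_J$ is a cluster $\cA$-variable for the seed attached to $\mathcal{C}_1$ (and hence nonvanishing on the $\cA$-torus for $\mathcal{C}_1$), while its expression in the cluster variables for $\mathcal{C}_2$ is a genuine Laurent polynomial with multiple monomial summands by the Laurent phenomenon. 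This produces a divisor distinguishing the two cluster charts in $\Pi_r$, which forces the $\cX$-transition to be non-monomial and thereby completes the application of Proposition \ref{prop:distinguish}.
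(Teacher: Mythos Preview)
Your approach is essentially the same as the paper's: construct fillings from conjugate Lagrangians of reduced plabic graphs indexed by maximal weakly separated collections, identify the induced charts with Postnikov's boundary measurement charts via Theorem~\ref{thm:positroid}, and then invoke Proposition~\ref{prop:distinguish} once you know distinct collections give charts with distinct images in $\Pi_r$.

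The only substantive difference is how that last step is justified. The paper cites Muller--Speyer \cite{MS}, who show that the image of the boundary measurement map is (up to the twist automorphism of \cite{MaSc}) exactly the locus where the Pl\"ucker coordinates indexed by the faces of $\Gamma$ are nonzero; distinct collections $\mathcal{C}_1 \neq \mathcal{C}_2$ then give distinct nonvanishing loci, hence distinct chart images. Your route via the Laurent phenomenon is morally the same idea, but the sentence ``a genuine Laurent polynomial with multiple monomial summands by the Laurent phenomenon'' is not quite a proof: the Laurent phenomenon only says $\Delta_J$ is \emph{some} Laurent polynomial in the $\mathcal{C}_2$-variables, not that it has more than one term. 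To close this you would need an additional input, e.g.\ that no cluster variable is a Laurent monomial in a cluster it does not belong to (which can be extracted from denominator-vector considerations or positivity), or you could simply cite \cite{MS} as the paper does. Either way the argument lands in the same place.
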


\begin{proof}
That distinct maximal weakly separated collections correspond to reduced plabic graphs whose boundary measurement maps have distinct images follows from the results of \cite{MS}.  Loc. cited shows the image of the boundary measurement map is defined (up to a fixed global automorphism, the so-called twist \cite{MaSc}) by the nonvanishing of a collection of Pl\"ucker coordinates associated to the graph as in \cite{Sco}.  Distinct weakly separated collections correspond to distinct collections of Pl\"ucker coordinates, hence their nonvanishing loci are distinct.  The main statement then follows from \Cref{prop:distinguish}, and the Catalan numbers of the $k=2$ case appear since in this case reduced plabic graphs are in correspondence with triangulations of an $n$-gon \cite{FZ}.
\end{proof}

There is analogous notion of weakly separated collection in a more general positroid, and using this \Cref{prop:numforpos} generalizes to any positive annular braid arising from a positroid stratum. We refer to \cite{OPS} for the relevant definitions and results. Except for the open positroid stratum of $\Gr(2,n)$, we do not know of a closed formula for the number of maximal weakly separated collections.

A Legendrian of the form $\beta \looparrowright \bigcirc$ lives  
in $T^\infty \R^2$; the above statement concerns its fillings in $T^* \R^2$.   
However, it is more common to consider Legendrians in the standard contact 
$\R^3$ and their fillings in its symplectization $\R^4$.  
Of course, we can view $\R^3 = J^1(\R)$ as $T^{\infty,-}\R^2$, half the co-circle bundle of
$\R^2$, and correspondingly view $\R^4$ as $T^{-} \R^2 \subset T^* \R^2$.  However, now the 
front projections of the knots will have cusps, a phenomenon we have 
avoided throughout this paper.

Nonetheless, our techniques have implications for this setting. Given a $\Lambda \subset J^1(S^1)$, we write $\Lambda \looparrowright \bigcirc \subset T^\infty \R^2$ and $\Lambda \looparrowright \cunknot \subset T^{\infty,-}\R^2$ for the resulting Legendrian satellites of the co-circle and standard unknot, respectively. From the above contactomorphism one has the following correspondence \cite{Ng-sat,Etn}.

\begin{proposition} \label{prop:trans}
The exact Lagrangian fillings of the satellite 
$\Lambda \looparrowright \bigcirc$ inside $T^* \R^2$ 
are in bijective correspondence with those of $\Lambda \looparrowright \cunknot$
inside $T^{-}\R^2$, and this bijection respects Hamiltonian isotopy. 
\end{proposition}



Proposition \ref{prop:trans} allows us to translate our results to statements about Legendrians 
in the usual contact $\R^3$.  To make this explicit, let us describe
more explicitly the links of the form $\Lambda \looparrowright \cunknot$.  

For a positive braid $\beta$, we write $\beta^\succ$ for the Legendrian with the following 
front diagram:  
\begin{center}  
\includegraphics[scale = .3]{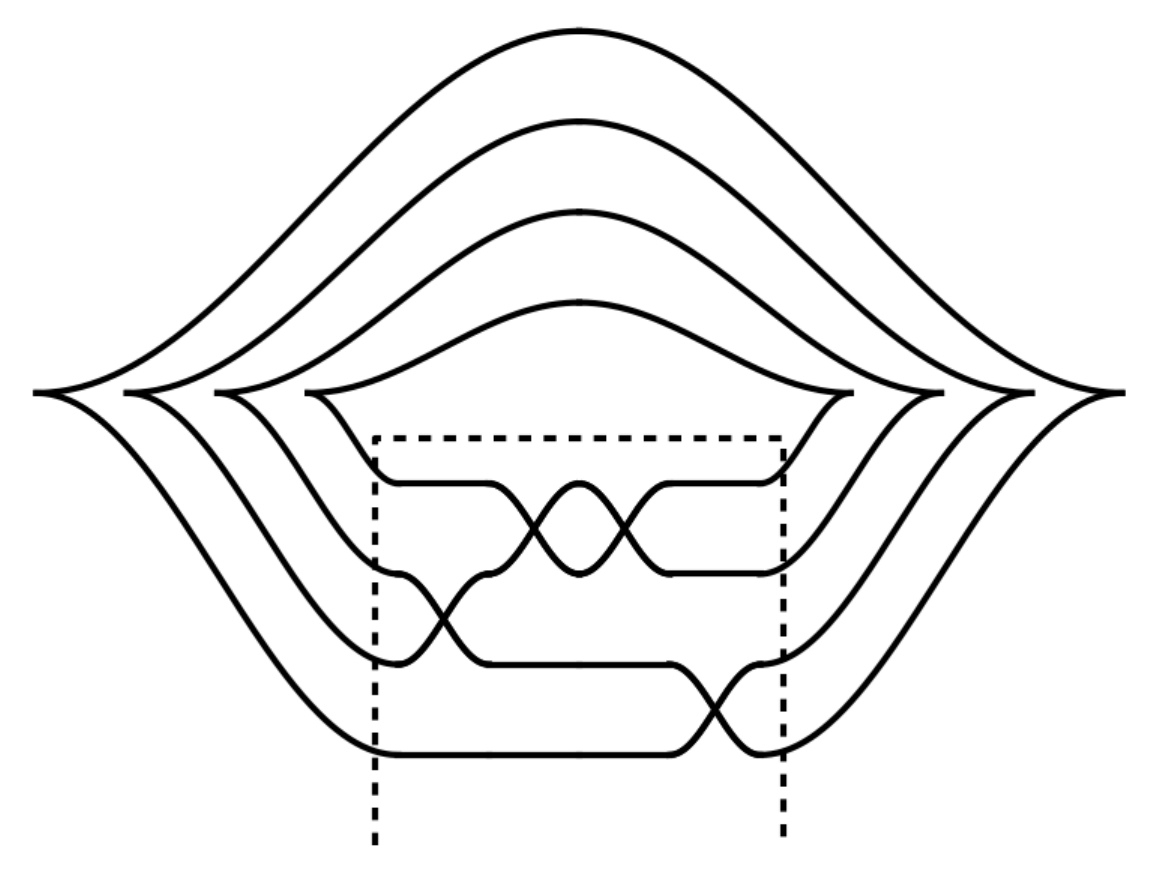} 
\end{center}
Here the braid is $\beta = s_2 s_3^2 s_1$.  In general, we can place any positive braid in the the interior of the dashed region.
We called this
the ``rainbow closure'' in \cite[Sec. 6.2.]{STZ}.  This Legendrian
is a maximal Thurston-Benniquin representative of the braid closure of $\beta$ \cite{Tan},
related to the above satellite construction as follows: 

\begin{proposition} \label{prop:rainsat}
Let $\beta$ be a positive braid.  
Then $\Delta \beta \Delta \looparrowright \cunknot$ and $\beta^\succ$ are Legendrian 
isotopic. 
\end{proposition}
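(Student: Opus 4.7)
The plan is to transfer the problem to the cocircle picture of \Cref{prop:rainspir}, where the cusps of $\cunknot$ are resolved into smooth circle geometry, then translate back.

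First, I would apply \Cref{prop:rainspir} to identify $\cunknot \subset T^{\infty,-}\bR^2$ with a cocircle $\bigcirc \subset T^\infty \bH^2$. Under this identification, the Legendrian $\Delta\beta\Delta \looparrowright \cunknot$ corresponds to $\Delta\beta\Delta \looparrowright \bigcirc$, which is straightforward to visualize as $n$ concentric small circles around the basepoint of $\bH^2$ braided by the pattern $\Delta\beta\Delta$. In particular the cusps of $\cunknot$ are entirely absent from the cocircle picture; they are artifacts of the contactomorphism rather than intrinsic features of the satellite.

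Second, I would carry out a local analysis at each of the two cusps of $\cunknot$. The key technical claim is: under the contactomorphism of \Cref{prop:rainspir}, each cusp of $\cunknot$ contributes a positive half-twist $\Delta$ worth of crossings among the $n$ satellite strands in the $\bR^3_{std}$ front projection. This is a calculation directly in the coordinate formulas of the proof of \Cref{prop:rainspir}: near a cusp the angular variable parametrizing $\bigcirc$ passes through a critical value at which the $n$ strands must reverse order in the front projection, with the associated crossings assembling into a positive $\Delta$.

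Third, with the local analysis in hand, the proof concludes via a global Legendrian isotopy. Position the two $\Delta$ factors of $\Delta\beta\Delta$ in the satellite front so that each sits adjacent to one of the two cusps of $\cunknot$. The braid $\Delta$ and the $\Delta$ contributed by the cusp of $\cunknot$ together admit a local Legendrian isotopy to a smooth rainbow cap of nested arcs. Applying this at both cusps leaves the braid $\beta$ in the middle capped off on each side by a rainbow cap, which is $\beta^\succ$ by definition.

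The main obstacle is step two --- the local calculation at each cusp of $\cunknot$. Intuitively clear but rigorously delicate, it requires careful tracking of the front projection through the explicit coordinate changes of \Cref{prop:rainspir}. A useful consistency check is that both sides must have the same Thurston-Bennequin number, namely $c(\beta)-n$ (where $c(\beta)$ denotes the number of crossings of $\beta$), and this forces each cusp of $\cunknot$ to contribute exactly $\binom{n}{2}$ positive crossings to the satellite front. The detailed isotopy exhibiting how a cusp-plus-$\Delta$ is Legendrian isotopic to a rainbow cap is a standard calculation in the satellite literature (cf.\ \cite{Ng-sat}).
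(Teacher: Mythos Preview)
Your step 3 is the whole proof, and it is essentially what the paper does: the paper simply cites \cite{Ng-sat} and exhibits the isotopy in a figure, entirely within $\R^3_{std}$. The local fact you invoke there --- that a cusp of the companion together with an adjacent $\Delta$ in the pattern is Legendrian isotopic to a rainbow cap of $n$ nested arcs --- is precisely the content of the cited reference, and applying it at both cusps of $\cunknot$ immediately turns $\Delta\beta\Delta \looparrowright \cunknot$ into $\beta^\succ$.

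Your steps 1 and 2, however, are an unnecessary detour, and the logic in step 2 is muddled. \Cref{prop:rainspir} is not used in the paper's proof of this proposition at all; it is a separate tool used elsewhere (in \Cref{cor:trans} and \Cref{cor:count}) to transport fillings between $T^*\bH^2$ and $T^-\bR^2$. The contactomorphism does not ``contribute'' half-twists at the cusps: the half-twist at each cusp is a feature of the Legendrian satellite construction in $\R^3_{std}$ itself (the $n$ nested cusps reverse the vertical order of strands), and this is what \cite{Ng-sat} analyzes directly. Passing to the cocircle picture and back adds nothing, since you still need the $\R^3_{std}$ isotopy from \cite{Ng-sat} to finish. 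Your Thurston--Bennequin consistency check is correct and is a good sanity check, but the actual argument should simply be: invoke the local isotopy of \cite{Ng-sat} at each cusp.
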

\begin{proof}
According to  \cite{Ng-sat}, the isotopy of Figure \ref{fig:rainsat} relates 
$\beta^\succ$.
to $\Delta \beta \Delta \looparrowright \cunknot$. 
\end{proof}
\begin{figure}[H]
\includegraphics[scale=0.4]{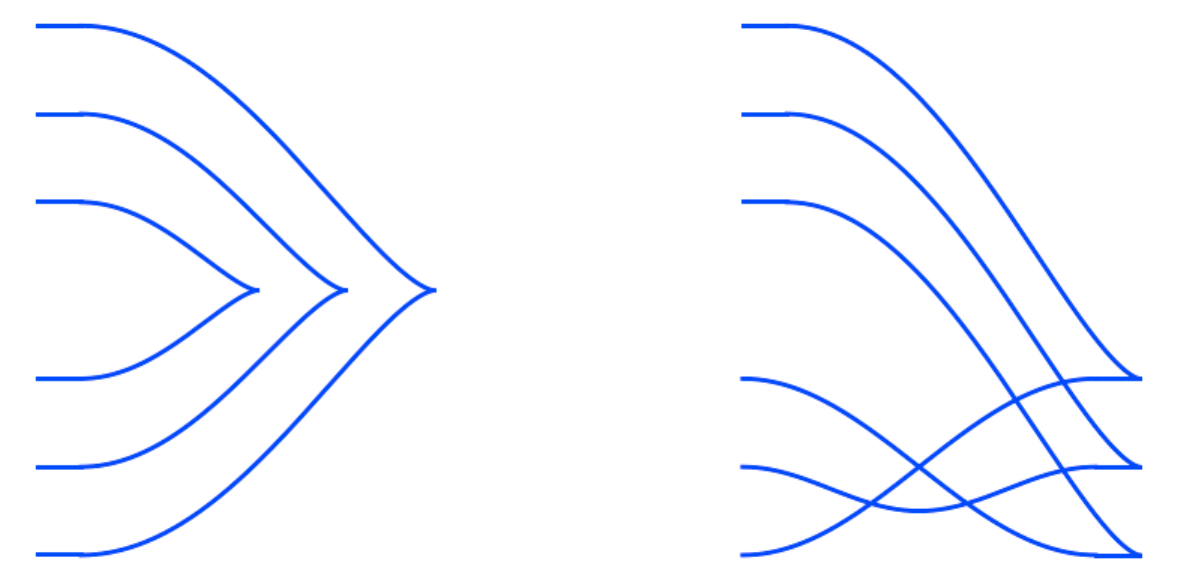}
\caption{\label{fig:rainsat} Equivalence of rainbow closure (left) and satellite  with half twist at cusps (right).}
\end{figure}

By \cite{EH},  
$T_{k,n}^\succ$ is the unique 
Legendrian $(k, n)$ torus knot of maximal Thurston-Bennequin number.

\begin{corollary}\label{cor:count}
The Legendrian $(k, n)$ torus link of maximal Thurston-Bennequin number 
admits a collection of exact Lagrangian fillings 
 labeled by maximal pairwise weakly separated $k$-element subsets of $[1,k+n]$.  
No two are Hamiltonian isotopic. 
In particular, if $k = 2$ the number of distinct exact Lagrangian fillings is at least the Catalan number $C_{n}$.
\end{corollary}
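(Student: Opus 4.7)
The plan is to deduce this from Proposition \ref{prop:numforpos} by transporting fillings through the contactomorphism of Proposition \ref{prop:rainspir} together with the rainbow-closure identification of Proposition \ref{prop:rainsat}.

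First I would identify the maximal-$tb$ Legendrian $(k,n)$ torus link with a satellite of the eye unknot. Since $\Delta^2$ is the central full twist in the $k$-strand braid group and $T_{k,n}\Delta^2 = T_{k,n+k}$, centrality gives $T_{k,n+k} = \Delta \cdot T_{k,n} \cdot \Delta$. Applying Proposition \ref{prop:rainsat} with $\beta = T_{k,n}$ then yields a Legendrian isotopy between $T_{k,n+k} \looparrowright \cunknot \subset T^{\infty,-}\R^2$ and the rainbow closure $T_{k,n}^\succ$, which by \cite{EH} is the maximal-$tb$ representative of the $(k,n)$ torus link.

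Next I would invoke Proposition \ref{prop:numforpos} to produce a collection of pairwise non-Hamiltonian-isotopic exact Lagrangian fillings of $T_{k,n+k} \looparrowright \bigcirc \subset T^\infty\R^2$, indexed by maximal pairwise weakly separated $k$-element subsets of $[1, k+n]$. Corollary \ref{cor:trans} guarantees that the contactomorphism of Proposition \ref{prop:rainspir} induces a bijection between the Hamiltonian-isotopy classes of fillings of $T_{k,n+k} \looparrowright \bigcirc$ in $T^*\R^2$ and those of $T_{k,n+k} \looparrowright \cunknot$ in $T^{-}\R^2$. Composing with the Legendrian isotopy from the first step --- which in turn induces a bijection on Hamiltonian-isotopy classes of fillings by lifting the ambient contact isotopy to the symplectization --- transfers the collection to the desired inequivalent fillings of $T_{k,n}^\succ$.

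For the Catalan number in the $k=2$ case, maximal pairwise weakly separated $2$-element subsets of $[1,n+2]$ correspond to the diagonals of a triangulation of the convex $(n+2)$-gon, and such triangulations are enumerated by $C_n$. The substance of the proof is really just the assembly of previously established results; the main point requiring verification is that each intermediate step --- the contactomorphism of Proposition \ref{prop:rainspir}, the Reidemeister-type isotopy of Proposition \ref{prop:rainsat}, and the ambient Legendrian isotopy --- respects Hamiltonian isotopy of fillings, which is already encoded in the statements we are citing.
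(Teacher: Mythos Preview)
Your proposal is correct and follows exactly the route the paper intends: combine Proposition~\ref{prop:numforpos} with the transport of fillings provided by Corollary~\ref{cor:trans} and the identification $T_{k,n+k}\looparrowright\cunknot \simeq T_{k,n}^\succ$ from Proposition~\ref{prop:rainsat}, using centrality of $\Delta^2$ to write $T_{k,n+k}=\Delta T_{k,n}\Delta$. The paper leaves the proof implicit, and your write-up supplies precisely the missing connective tissue.
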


We leave it to the reader to formulate the analogous statement related to more general positive Legendrian braid closures using the notion of weakly separated collections in a positroid \cite{OPS}. We note that in the case of the $(2,n)$ torus link the fillings constructed above are identified in \cite[Sec. 2.3]{TZ} with those constructed in \cite{EHK}. Togeter with the results of this section this proves in particular that these fillings are pairwise not Hamiltonian isotopic, which was left as an open question in \cite{EHK}.


\begin{thebibliography}{10}




\bibitem[ABCGPT]{ABCGPT} Nima Arkani-Hamed, Jacob Bourjaily, Freddy Cachazo, Alexander Goncharov, Alexander Postnikov,  and Jaroslav Trnka, 
\emph{Scattering amplitudes and the postiive Grassmannian}, arXiv:1212.5605.


\bibitem[Aur]{A} Denis Auroux, {\em Mirror symmetry and T-duality in the complement of an anticanonical divisor}, arXiv:0706.3207.

\bibitem[BV1]{BV-formal} 
Donald Babbitt and Veeravalli Varadarajan, 
{\em Formal reduction theory of meromorphic differential
equations: a group theoretic view}, 
Pacific J. Math. 108 (1983), 1-80. 


\bibitem[BV2]{BV2} Donald Babbitt and Veeravalli Varadarajan, {\em Local moduli for meromorphic differential equations},
Bulletin (New Series) of the AMS 12.1 (1985). 


\bibitem[BV3]{BV3} 
Donald Babbitt and Veeravalli Varadarajan, \ul{
  Deformations of nilpotent matrices over rings and reduction of analytic families of meromorphic differential equations}, Mem. Amer. Math. Soc. {\bf 55}.325 (1985).  

\bibitem[BW]{BW}
Sean Bates and Alan Weinstein, {\em Lectures on the geometry of quantization}, Berkeley Mathematics Lecture Notes, 8. (1997): vi+137 pp.











\bibitem[Del]{Del-RH} Pierre Deligne,  \ul{\'Equations diff\'erentielles \`a points singuliers r\'eguliers}, Lecture Notes in Mathematics 163, (Springer-Verlag 1970). 

\bibitem[DMR]{DMR} Pierre Deligne, Bernard Malgrange, and Jean-Pierre Ramis, 
\ul{ Singularit\'es irr\'eguli\`ers: Correspondance et Documents}, (SMF 2007). 


\bibitem[EHK]{EHK} Tobias Ekholm, Ko Honda, Tam\'as K\'alm\'an, 
{\em Legendrian knots and exact Lagrangian cobordism}, arXiv:1212.1519. 

\bibitem[EH]{EH} John Etnyre and Ko Honda, {\em Knots and contact geometry I: 
torus knots and the figure eight knot}, Journal of Symplectic Geometry 1.1 (2001): 
63--120. 

\bibitem[Etn]{Etn} John Etnyre, 
{\em Legendrian and transversal knots}, \ul{Handbook of knot theory}, Elsevier, (2005), 105--185. 


\bibitem[FHKV]{FHKV} Bo Feng, Yang-Hui He, Kristian Kennaway, and 
Cumrun Vafa, {\em Dimer models from mirror symmetry and quivering amoebae},
Adv. Theor. Math. Phys. {\bf 12} (2008): 489--545.


\bibitem[FG1]{FG1} Vladimir Fock and Alexander Goncharov, 
{\em Moduli spaces of local systems and higher Teichm\"uller theory}, 
Publ. Math. Inst. Hautes \'Etudes Sci. 103 (2006): 1--211. 

\bibitem[FG2]{FG2} Vladimir Fock and Alexander Goncharov, 
{\em Cluster ensembles, quantization and the dilogarithm}, 
Ann. Sci. \'Ec. Norm. Sup\'er. 42 (2009): 865--930. 


\bibitem[FG4]{FG4} Vladimir Fock and Alexander Goncharov, {\em Cluster $\cX$-varieties, amalgamation, and Poisson-Lie groups}, in \ul{Algebraic geometry and number theory} 
(Birkh\"auser 2006): 27-68.


\bibitem[Fom]{Fom}
Sergey Fomin, {\em Total Positivity and Cluster Algebras}, arXiv:1005.1086. 


\bibitem[FST]{FST} 
Sergey Fomin, Michael Shapiro, and Dylan Thurston, {\em 
Cluster algebras and triangulated surfaces. Part I: Cluster complexes}, 
Acta Math. 201 (2008), 83--146. 

\bibitem[FZ]{FZ} Sergey Fomin and Andrei Zelevinsky, {\em Cluster algebras I: Foundations}, 
Journal of the American Mathematical Society {\bf 15}.2 (2001): 497--529. 


\bibitem[FZ1]{FZ1} Sergey Fomin and Andrei Zelevinsky, {\em Double Bruhat cells and total positivity},
Journal of the American Mathematical Society 12 (1999): 335--380. 

\bibitem[FZ2]{FZ2} Sergey Fomin and Andrei Zelevinsky, {\em Cluster algebras II: Finite type classification}, Inventiones Mathematicae 154.1 (2003): 63-121.



\bibitem[FOOO]{FOOO} Kenji Fukaya, Yong-Geun Oh, Hiroshi Ohta, and Kaoru Ono, \ul{Lagrangian Intersection Floer Theory: Anomaly and Obstruction, Part I},
Stud.  Adv. Math. 46.1 (AMS 2009). 





\bibitem[GM]{GM} Israel Gelfand and Robert MacPherson, {\em Geometry in Grassmannians and a generalization of the dilogarithm},
Advances in Mathematics 44.3 (1982): 279--312.


\bibitem[GMN2]{GMN2} Davide Gaiotto, Gregory Moore and Andy Neitzke, {\em Spectral networks},
arXiv:1204.4824. 


\bibitem[GK]{GK} Alexander 
Goncharov and Richard Kenyon, \emph{Dimers and cluster integrable systems},
Ann. Sci. ENS {\bf 46} (2013) 747--813.



\bibitem[Gon]{Gon} Alexander Goncharov, {\em Ideal webs, moduli spaces of local systems, and 3d Calabi-Yau Categories}, arXiv:1607.05228. 

\bibitem[GHK]{GHK} 
Mark Gross, Paul Hacking, and Sean Keel, {\em Birational geometry of cluster algebras}, 
Algebraic Geometry {\bf 2}.2 (2015), 137--175. 

\bibitem[Gro]{Gro}  Alexander Grothendieck, \ul{ \'El\'ements de g\'eom\'etrie alg\'ebrique. IV. \'Etude locale des sch\'emas et des morphismes de sch\'emas. III.}, 
Inst. Hautes Études Sci. Publ. Math. 28 (1966): 103--104, 

\bibitem[Gui]{Gui} St\'ephane Guillermou, {\em Quantization of conic Lagrangian submanifolds of cotangent bundles}, arXiv:1212.5818. 

\bibitem[GKS]{GKS} St\'ephane Guillermou, Masaki Kashiwara, and Pierre Schapira,
\emph{Sheaf Quantization of Hamiltonian Isotopies and Appli-
cations to Nondisplaceability Problems,} Duke Math J. {\bf 161} (2012): 201--245.




\bibitem[JT]{JinTreumann} Xin Jin and David Treumann, {\it Brane structures in microlocal sheaf theory}, {\sf arXiv:1704.04291}



\bibitem[KS]{KS} Masaki Kashiwara and Pierre Schapira, \ul{Sheaves on Manifolds,} 
Grundlehren der Mathematischen Wissenschafte
{\bf 292} (Springer-Verlag, 1990).

\bibitem[Kel]{Kel} Bernhard Keller, {\em On differential graded categories}, 
arXiv:math/0601185. 

\bibitem[Kel2]{Kel2} Bernhard Keller, {\em Cluster algebras and derived categories},
arXiv:1202.4161. 


\bibitem[Kuw]{Kuw} Tatsuki Kuwagaki, {\em The nonequivariant coherent-constructible correspondence for toric surfaces}, 1507.05393. 

\bibitem[KKP]{KKP} Ludmil Katzarkov, Maxim Kontsevich and Tony Pantev, \emph{Hodge theoretic aspects of mirror symmetry}, arXiv:0806.0107.

\bibitem[KoS]{KoS} Maxim Kontsevich and Yan Soibelman, 
\underline{Affine structures and non-Archimedean analytic spaces},
(Birkh\"auser 2006).

\bibitem[KoS2]{KoS2}
Maxim Kontsevich and Yan Soibelman, {\em Stability structures, motivic Donaldson-Thomas invariants and cluster transformations}, arXiv:0811.2435. 

\bibitem[KLS]{KLS} Allen Knutson, Thomas Lam, and David Speyer, \emph{Positroid varieties: juggling and geometry}, Compos. Math. \textbf{149}.10 (2013): 1710--1752. 


\bibitem[Lec]{Lec} Bernard Leclerc, {\em Cluster algebras and representation theory}, 
arXiv:1009.4552. 

\bibitem[LS]{LS} Fran\c cois Lalonde and Jean-Claude Sikorav, 
{\em Sois-vari\'et\'es lagrangiennes et lagrangiennes exact des fibr\'es contangents},
 Comm. Math. Helvet. (1991), 18--33.

\bibitem[Lur]{Lur} Jacob Lurie, {\em Derived Algebraic Geometry}, available at 
\url{http://www.math.harvard.edu/~lurie}.


\bibitem[Mal]{Mal} Bernard Malgrange, {\em La classification des connexions irr\'eguli\`eres \`a une variable}, in 
\underline{Mathematics and physics} (Paris, 1979/1982); 
Progr. Math. {\bf 37} (Birkh\"auser 1983) 381--399. 



\bibitem[MuSp]{MS} Greg Muller and David Speyer, {\em The twist for positroid varieties}, unpublished manuscript.

\bibitem[MaSc]{MaSc} Robert Marsh and Jeanne Scott, {\em Twists of Pl\"ucker Coordinates as Dimer Partition Functions}, Comm. Math. Phys. (2015): 1--64.




\bibitem[Nad]{N} David Nadler, {\em Microlocal branes are constructible sheaves},
Selecta Math. (N.S.) {\bf 15} (2009) 563--619.




\bibitem[Nad5]{N5} David Nadler, {\em Non-characteristic expansions of Legendrian singularities}, arXiv:1507.01513.

\bibitem[Nad6]{N6} David Nadler, {\em Wrapped microlocal sheaves on pairs of pants}, arXiv:1604.00114.

\bibitem[NZ]{NZ} David Nadler and Eric Zaslow, 
{\em Constructible Sheaves and the Fukaya Category}, J. Amer. Math. Soc. {\bf 22} (2009), 233--286.


\bibitem[Ng]{Ng-sat} Lenhard Ng, {\em The Legendrian satellite construction}, 
arXiv:math/0112105. 


\bibitem[NRSS]{NRSS} Lenhard Ng, Daniel Rutherford, Vivek Shende, and Steven Sivek,
{\em The cardinality of the augmentation category}, arXiv:1511.06724. 


\bibitem[OPS]{OPS} Suho Oh, Alexander Postnikov, David Speyer, {\em Weak Separation and Plabic Graphs}, arXiv:1109.4434. 

\bibitem[PTVV]{PTVV} Tony Pantev, Bertrand To\"en, Michel Vaquie, and Gabriele Vessozi,
{\em Shifted Symplectic Structures}, arXiv:1111.3209.

\bibitem[Pos]{Po} Alexander Postnikov, 
\emph{Total positivity, Grassmannians, and networks}, arXiv:0609764.

\bibitem[Pol]{Pol} Leonid Polterovich, {\em The surgery of Lagrange submanifolds},
Geometric \& Functional Analysis 1.2 (1991), 198--210. 




\bibitem[Sco]{Sco} Jeanne Scott, {\em Grassmannians and cluster algebras}, Proc. of the London Mathematical Society 92.02 (2006): 345-380.


\bibitem[Sei]{Sei} Paul Seidel,
{\em Lagrangian two-spheres can be symplectically knotted}, 
J. Differential Geom. 52 (1999), no. 1: 145–171. 

\bibitem[Sei2]{Se} Paul Seidel, {\em Lectures on categorical dynamics and 
symplectic topology}, \url{http://www-math.mit.edu/~seidel/937/lecture-notes.pdf}



\bibitem[ShT]{ShT} Vivek Shende and Alex Takeda, {\em Calabi-Yau structures on topological Fukaya categories}, arxiv:1605.02721. 

\bibitem[STW]{STW} Vivek Shende, David Treumann, and Harold Williams, {\em 
On the combinatorics of exact Lagrangian surfaces}, arXiv:1603.07449.


\bibitem[STZ]{STZ} Vivek Shende, David Treumann and Eric Zaslow,
\emph{Legendrian Knots and Constructible Sheaves}, to appear in Invent. Math., arXiv:1402.0490.


\bibitem[Sto]{Sto} George Stokes, {\em On the discontinuity of arbitrary constants that appear in divergent developments}, Transactions of the Cambridge Philosophical Society 10 (1864): 105.

\bibitem[Tal]{Ta} Kelli Talaska, \emph{A formula for Pl\"ucker coordinates associated with a planar network}, Int. Math. Res. Not. (2008).

\bibitem[Tam]{Tam} Dmitry Tamarkin, {\em Microlocal condition for non-displaceablility}, 
arXiv:0809.1584. 

\bibitem[Tan]{Tan} Toshifumi Tanaka, {\em 
Maximal Bennequin numbers and Kauffman polynomials of positive links}, 
Proceedings of the American Mathematical Society 127.11 (1999):
 3427--3432.

\bibitem[Thu]{Thu} Dylan Thurston, {\em From dominoes to hexagons}, 
arXiv:math/0405482. 

\bibitem[To\"e1]{Toe} Bertand To\"en, {\em Higher and derived stacks: a global overview}, 
in: Algebraic geometry: Seattle 2005, Part I, Proc. Sympos. Pure Math {\bf 80},
AMS, 2009. 

\bibitem[To\"e2]{Toe2} Bertand To\"en, {\em Derived algebraic geometry}, 
EMS Surv. Math. Sci. {\bf 1} (2014) 153--240. 

\bibitem[To\"e3]{Toe3} Bertrand To\"{e}n, {\em Lectures on DG-categories}, 
in \ul{Topics in Algebraic and Topological K-Theory}, 
Lecture Notes in Mathematics 2008 (Springer 2011), 243-302.


\bibitem[TVa]{TVa} Bertrand To\"en and Michel Vaquie, {\em Moduli of objects in
dg-categories}, Ann. Sci. \'Ecole Norm. Sup. (4) {\bf 40} (2007) 387--444. 

\bibitem[TVe1]{TV1} Bertrand To\"en and Gabriele Vezzosi, {\em From HAG to DAG: derived
moduli stacks}, in: \ul{Axiomatic, enriched, and motivic homotopy theory},
NATO Sci. Ser. II Math Phys. Chem. {\bf 131} (Kluwer Acad. Publ. 2004), 173--216. 

\bibitem[TVe2]{TV2} Bertrand To\"en and Gabriele Vezzosi, {\em Homotopical algebraic geometry, I. 
Topos theory}, Adv. Math. {\bf 193} (2005), 257--372. 

\bibitem[TVe3]{TV3} Bertrand To\"en and Gabriele Vezzosi, {\em Homotopical algebraic geometry, II. 
  Geometric stacks and applications}, Mem. Amer. Math. Soc. {\bf 193} (2008).

\bibitem[TZ]{TZ} David Treumann and Eric Zaslow, {\it Cubic planar graphs and Legendrian surface theory}, {\sf arXiv:1609.04892}

\bibitem[Was]{W}Wolfgang Wasow, \ul{Asymptotic Expansions for Ordinary 
Differential Equations} (Interscience, 1965).







\end{thebibliography}
\end{document}